\theoremstyle{plain}
\newtheorem{theorem}{Theorem}[section]
\newtheorem{lemma}{Lemma}[section]
\newtheorem{proposition}{Proposition}[section]
\newtheorem{corollary}{Corollary}[section]
\theoremstyle{definition}
\newtheorem{remark}{Remark}[section]
\newtheorem{add}{Addendum}[section]
\DeclareMathOperator{\diag}{diag}
\DeclareMathOperator{\const}{const.}
\begin{document}

\begin{frontmatter}

\title{Two-step estimation of high dimensional additive models}
\runtitle{Two-step estimation}

\begin{aug}
  \author{Kengo Kato}
  \runauthor{K. Kato}
  \affiliation{Hiroshima University}
  \address{1-3-1 Kagamiyama, Higashi-Hiroshima, Hiroshima 739-8526, Japan. E-mail: kkato@hiroshima-u.ac.jp.}
\end{aug}

\begin{abstract}
This paper investigates  the two-step estimation of a high dimensional additive regression model, in which the number of nonparametric additive components is potentially larger than the sample size but the number of significant additive components is sufficiently small. 
The approach investigated consists of two steps. The first step implements the variable selection, typically by the group Lasso, and the second step applies the penalized least squares estimation with Sobolev penalties to the selected additive components.
Such a procedure is computationally simple to implement and, in our numerical experiments, works reasonably well. 
Despite its intuitive nature, the theoretical properties of this two-step procedure have to be carefully analyzed, since the effect of the first step variable selection is random, and generally it may contain redundant additive components and at the same time miss significant additive components. 
This paper derives a generic performance bound on the two-step estimation procedure allowing for these situations, and studies in detail 
the overall performance when the first step variable selection is implemented by the group Lasso. 

\end{abstract}

\begin{keyword}[class=AMS]
\kwd[Primary ]{62G05}
\kwd{62J99}
\end{keyword}

\begin{keyword}
\kwd{additive model}
\kwd{group Lasso}
\kwd{penalized least squares}
\end{keyword}

\end{frontmatter}

\section{Introduction}

In this paper, we are interested in estimating the nonparametric additive regression model
\begin{equation}
y_{i} = c^{*} + g^{*}(\bm{z}_{i}) + u_{i}, \ g^{*}(\bm{z}) = g^{*}_{1}(z_{1}) + \cdots + g^{*}_{d}(z_{d}), \ \mathrm{E}[u_{i} | \bm{z}_{i} ]  = 0, \label{model}
\end{equation}
where $y_{i}$ is a dependent variable and $\bm{z}_{i}=(z_{i1},\dots,z_{id})'$ is a vector of $d$ explanatory variables. Throughout the paper, we assume that the observations are independent and identically distributed (i.i.d.). We presume the situation in which $d$ is larger than the sample size $n$ (in fact we allow for that $d$ is a non-polynomial order in $n$), but most of $g_{1}^{*},\dots,g_{d}^{*}$ are zero functions. So the model of interest is a high dimensional sparse additive model.
Let $\mathcal{Z}$ denote the support of $\bm{z}_{1}$. Without loosing much generality, we assume that $\mathcal{Z} = [0,1]^{d}$.
The unknown additive components $g^{*}_{1},\dots,g^{*}_{d}$ are known to belong to a given class $\mathcal{G}$ of functions on $[0,1]$. 
Throughout the paper, we consider the case in which $\mathcal{G}$ is a Sobolev class: $\mathcal{G} = W_{2}^{\nu}([0,1])$, where $\nu$ is a positive integer and 
\begin{align*}
W_{2}^{\nu}([0,1]) := \Big \{ g:[0,1] \to \mathbb{R} : \ &\text{$g^{(\nu-1)}$ is absolutely continuous such that} \\
&\int_{0}^{1} g^{(\nu)} (z)^{2} dz < \infty \Big  \}.
\end{align*}
For identification of $g^{*}_{1},\dots,g^{*}_{d}$, we assume that 
\begin{equation*}
\mathrm{E}[g^{*}_{j}(z_{1j})] = 0, \ 1 \leq \forall j \leq d.
\end{equation*}
Let $T^{*} := \{ j \in \{1,\dots,d \}: \mathrm{E}[ g_{j}(z_{1j})^{2}] \neq 0 \}$, the index set of nonzero components, and $s^{*} := | T^{*} |$, the number of nonzero components. It is assumed that $s^{*}$ is smaller than $n$. 

There has been a growing interest in estimation of high dimensional sparse additive models \citep{LZ06,RLLW09,MVB09,HHW10,KY10,RWY10,STS11,FFS11,BV11}. Parallel to parametric regression models,
sparsity of the underlying structure makes it possible to estimate consistently the parameter of interest (in this case, the conditional mean function) even when $d$ is larger than $n$.
Estimation accuracy is not a sole goal. In fact, it may happen that, despite the underlying sparsity structure, an estimator containing  many redundant components has a good estimation accuracy. However, to make a better interpretation, one wishes to have a concise model.  Therefore,  the goal is to obtain an estimator that is (i) appropriately sparse, in the sense that 
it does not contain many redundant additive components, and at the same time (ii) possesses a good estimation accuracy.   
 
A distinctive feature of the present nonparametric estimation, when compared with the parametric case, is that the function class $\mathcal{G}$ is much more complex, which brings a new challenge.
To address this problem, in a fundamental paper,  \cite{MVB09}, they proposed a penalized least squares estimation method with the novel penalty term:
\begin{equation}
\tilde{\lambda}_{1} \sum_{j=1}^{d} \sqrt{ \| g_{j} \|_{2,n}^{2} +\tilde{\lambda}_{2} I(g_{j})^{2} } + \tilde{\lambda}_{3}\sum_{j=1}^{d} I(g_{j})^{2}, \label{mvbpenalty}
\end{equation}
where
\begin{equation*}
\| g_{j} \|_{2,n}^{2} := \frac{1}{n} \sum_{i=1}^{n} g_{j}(z_{ij})^{2}, I(g_{j})^{2} := \int_{0}^{1} g_{j}^{(\nu)}(z_{j})^{2}dz_{j}.
\end{equation*}
The term $\| g_{j} \|_{2,n}$ penalizes the event that $g_{j}$ enters the model, thereby to enforce sparsity of the resulting estimator; the term $I(g_{j})$ penalizes roughness of $g_{j}$ and controls the complexity of the class $\mathcal{G}$, thereby to avoid an overfitting and guarantee a good estimation accuracy of the resulting estimator.
See \cite{KY10,RWY10,STS11} for a further progress. An important theoretical fact is that, as \cite{STS11} showed\footnote{\cite{STS11} adopted a slightly different formulation than \cite{MVB09}, i.e., in \cite{STS11}, the Sobolev penalty $I(g_{j})$ is replaced by the reproducing kernel Hilbert space norm. However, essentially the same proof applies to the original \cite{MVB09} estimator.}, under suitable regularity conditions such as the uniform boundedness of the error term, 
the \cite{MVB09} estimator achieves the minimax  rate of convergence (in the $L_{2}$-risk) $s^{*} \delta^{2}$ where
\begin{equation*}
\delta := \max \left \{ n^{-\nu/(2\nu+1)}, \sqrt{\frac{\log d}{n}} \right \}.
\end{equation*}
See \cite{RWY10} and \cite{STS11} for minimax rates in our problem. 
Thus, from a theoretical point of view, their estimator has a good convergence property. 

However, we would like to point out that the double penalization strategy that 
\cite{MVB09} used may practically lead to a loss of accuracy in estimation/variable selection. In practice, the sparsity penalty brings a shrinkage bias to the selected additive components, so the resulting estimator may have a worse performance than an oracle estimator, which is an ``estimator'' constructed as if $T^{*}$ were known,  even when the correct model selection is achieved. Furthermore, choosing the tuning parameters in such a way that the estimation accuracy is optimized would result in including too many redundant variables. 
The problem of shrinkage bias caused by sparsity penalties has been recognized in the parametric regression case. 
In the linear regression case, \cite{BC09} considered the two-step estimator of the coefficient vector, which corresponds to the least squares estimator applied to the variables selected by the Lasso \citep{Ti96}, and observed that in their simulation experiments the two-step estimator significantly outperforms the Lasso estimator because the former can remove a shrinkage bias. Motivated by these observations, we consider the two-step estimation of high dimensional additive models in which the first step implements the variable selection, typically by the group Lasso \citep{YL06}, and the second step applies the penalized least squares estimation with Sobolev penalties to the selected additive components. The paper is devoted to a careful study of the theoretical and numerical properties of this two-step estimator.

The main theoretical finding of the paper is to derive a generic bound on the $L_{2}$-risk of the second step estimator. In a typical situation, the bound reduces to 
\begin{equation}
\max \left \{ s^{*} n^{-2\nu/(2\nu+1)}, | \hat{T} \backslash T^{*} |\delta^{2}, \| {\textstyle \sum}_{j \in T^{*} \backslash \hat{T}} g_{j}^{*} \|^{2}_{2} \right \}, \label{ibound}
\end{equation}
where $\hat{T} \subset \{ 1,\dots, d \}$ is the index set selected by the first step variable selection. 
Importantly,  this bound applies to any variable selection method such that, roughly speaking, the size $| \hat{T} |$ is stochastically not overly large compared with $s^{*}$, and holds in both the situations in which (i) $\hat{T}$ may have redundant variables (i.e., $\hat{T} \backslash T^{*} \neq \emptyset$), and (ii) $\hat{T}$ may miss significant variables (i.e., $T^{*} \backslash \hat{T} \neq \emptyset$). This bound has a natural  interpretation. 
The first term $s^{*} n^{-2\nu/(2\nu+1)}$ corresponds to the oracle rate, the rate that could be achieved when $T^{*}$ were known; the second term $| \hat{T} \backslash T^{*} | \delta^{2}$ corresponds to the effect of selecting redundant variables; the third term $\| \sum_{j \in T^{*} \backslash \hat{T}} g_{j}^{*} \|^{2}_{2}$
corresponds to the effect of missing significant variables. 

One may wonder that it is plausible  to presume the perfect model selection (i.e., $\hat{T} = T^{*}$ with probability approaching one), in which case the analysis becomes trivial, since one may guarantee the perfect model selection by applying a hard thresholding method to the first step group Lasso or using the adaptive group Lasso. 
However, what we need is a bound that applies to a general situation in which $\hat{T}$ may fail to recover $T^{*}$. In view of the literature, 
to guarantee the perfect model selection requires a side condition that the non-zero additive components are well separated from zero (in the $L_{2}$-sense), which is considerably restrictive from a theoretical point of view. In fact, under the side condition, the exact oracle rate $s^{*} n^{-2\nu/(2\nu+1)}$ will be  achievable, and in view of the minimax rate, this means that the complexity of the problem is significantly reduced.\footnote{$\sqrt{\log d/n}$ can be dominant in $\delta$ as long as $\log d/n^{1/(2\nu+1)} \to \infty$. Our analysis allows for this case.}
Therefore, to make a meaningful comparison with existing estimators such as the \cite{MVB09} estimator, one has to 
establish a performance bound without presuming the perfect model selection. 
An important aspect of the bound (\ref{ibound}) is that it characterizes the effect of the first step variable selection in an explicit manner, which makes the analysis non-trivial.
Another interesting finding is that, despite the random fluctuation of $\hat{T}$, the smoothing penalty level in the second step can be taken independent of $d$.

In this paper, we primarily focus on to use the group Lasso as a first step variable selection method. 
The side (and hence not main) contribution of the paper is to establish (some) refined asymptotic results on the statistical properties of the group Lasso estimator for high dimensional additive models, which complements the recent literature on the theoretical study of the group Lasso \citep{NR08,B08,WH10,HZ10,HHW10,LPTV10,NRWY10}. 
The group Lasso, when applied to estimation of additive models, is based on a different idea of dealing with the complexity of function classes, i.e., 
approximating each function class by a finite dimensional class of functions and controlling the complexity by its dimension.\footnote{See Chapter 10 of \cite{vdG00} for two different ideas, namely, penalty and sieve approaches, to deal with the complexity of function classes in nonparametric regression.} 
Expanding each additive component by a linear combination of given basis functions, selection of additive components reduces to selection of groups of the coefficient vector to 
the basis expansion, so that the group Lasso turns out to be an effective way of selecting additive components. 
Combined with the bound (\ref{ibound}), when the group Lasso is used as a first step procedure, it will be seen that (under suitable regularity conditions, of course)  (i) the two-step estimator is at least as good as the \cite{MVB09} estimator, meaning that it achieves the rate $s^{*} \delta^{2}$ in general cases in which $\hat{T}$ may fail to
recover $T^{*}$ (so $\hat{T} \backslash T^{*} \neq \emptyset$ or $T^{*} \backslash \hat{T} \neq \emptyset$, or both); (ii) if it happens that the perfect model selection holds, then the two-step estimator enjoys the exact oracle rate $s^{*} n^{-2\nu/(2\nu+1)}$; (iii) the second step estimation can automatically adapt to both situations (i) and (ii), i.e., adapt to the model selection ability of $\hat{T}$. 
We believe that these theoretical results in the context of estimation of high dimensional additive models are useful.

We also carry out simulation experiments to investigate the finite sample property of the two-step estimator. The simulation results suggest that the proposed two-step estimator is a good alternative in estimating high dimensional additive models. 

There are a large number of works on the theoretical analysis of penalized estimation methods for high dimensional sparse models, especially on the Lasso for linear regression models \citep{BTW07a,BTW07b,ZY07,ZH08,MY09,W09,CP09,BRT09,Z09}, generalized linear  models \citep{vdG08,NRWY10} and quantile regression models \citep{BC11}. See also \cite{BV11} for a recent review. In the quantile regression context, \cite{BC11} formally established the theoretical properties of the post-penalized estimator that corresponds to the unpenalized quantile regression estimator applied to the variables selected by the $\ell_{1}$-penalized estimator. Their analysis is extended to the linear regression case in \cite{BC09}. As noted before, the present paper builds on these fundamental papers, but 
has two important theoretical departures from the previous analysis: (i) the model of interest is a nonparametric additive model, and (ii) the second step estimation has smoothness penalty terms.

The remainder of the paper is organized as follows.
Section 2 describes the two-step estimation method. 
Section 3 presents some simulation experiments. 
Section 4 is devoted to the theoretical study. 
Section 5 concludes.
Section 6 provides a proof of Theorem \ref{thm1}. 
Some other technical proofs are gathered in Appendices.

{\bf Notation}: In the theoretical study, we rely on the asymptotic scheme in which $d$ and $s^{*}$ may diverge as the sample size $n$. Hence we agree that all parameters values (such as $d, s^{*} \dots$) are index by $n$ and the limit is always taken as $n \to \infty$, but we omit the the index $n$ in most cases. 
For two sequences $a=a(n)$ and $b=b(n)$, we use the notation $a \lesssim b$ 
if there exists a positive constant $C$ independent of $n$ such that $a \leq C b$, $a \asymp b$ if $a \lesssim b$ and $b \lesssim a$, and $a \lesssim_{p} b$ if $a = O_{p}(b)$.
Let $\mathbb{S}^{l-1}$ denote the unit sphere on $\mathbb{R}^{l}$ for a positive integer $l$. 
Let $\bm{I}_{l}$ denote the $l \times l$ identity matrix. We use $\| \cdot \|_{E}$ to indicate the Euclidean norm, and let $\| \cdot \|_{\infty}$ denote the supremum norm.
For a matrix $\bm{A}$, let $\| \bm{A} \|$ denote the operator norm of $\bm{A}$. For a symmetric positive semidefinite matrix $\bm{A}$, let $\bm{A}^{1/2}$ denote the symmetric square root matrix of $\bm{A}$. Let $\| \cdot \|_{2,n}$ and $\| \cdot \|_{2}$ denote the empirical and population $L_{2}$ norms with respect to $\bm{z}_{i}$'s respectively, i.e., 
for $g:\mathcal{Z} \to \mathbb{R}$, 
\begin{equation*}
\| g \|^{2}_{2,n} := \frac{1}{n} \sum_{i=1}^{n} g (\bm{z}_{i})^{2}, \ \| g \|^{2}_{2} := \mathrm{E}[ g(\bm{z}_{1})^{2}].
\end{equation*}
To make the notation simpler, if we write the index $j$ in $g_{j}:[0,1] \to \mathbb{R}$, we agree that 
\begin{equation*}
\| g_{j} \|^{2}_{2,n} := \frac{1}{n} \sum_{i=1}^{n} g_{j} (z_{ij})^{2}, \ \| g_{j} \|^{2}_{2} := \mathrm{E}[ g_{j}(z_{1j})^{2}].
\end{equation*}

\section{Two-step estimation}

This section describes the proposed estimation method. 

{\bf First step}: Use an appropriate variable selection method to determine a subset $\hat{T}$ of $\{1,\dots, d \}$. 

{\bf Second step}: 
Apply a penalized least squares method with roughness penalties to the selected additive components: 
\begin{multline}
(\tilde{c}, \tilde{g}_{j}, j \in \hat{T}) \\
:= \arg \min_{c \in \mathbb{R}, g_{j} \in \mathcal{G}, j \in \hat{T}} \left [ \frac{1}{2n} \sum_{i=1}^{n} (y_{i} - c - \sum_{j \in \hat{T}} g_{j}(z_{ij}))^{2} + \sum_{j \in \hat{T}} \lambda_{2,j}^{2} I(g_{j})^{2} \right ],
\label{pl}
\end{multline}
subject to the restrictions $\sum_{i=1}^{n} g_{j}(z_{ij}) = 0, \ \forall j \in \hat{T}$,
where $\lambda_{2,j} \geq 0$ are smoothing parameter and the term $I(\cdot)$ is the Sobolev penalty: 
\begin{equation*}
I(f)^{2} := \int_{0}^{1} f^{(\nu)} (z)^{2} dz.
\end{equation*}
The resulting estimator of $g^{*}$ is given by $\tilde{g} (\bm{z}) :=\sum_{j \in \hat{T}} \tilde{g}_{j}(z_{j})$. In the theoretical study, to make the argument simple, we let 
\begin{equation*}
\lambda_{2,1} = \cdots = \lambda_{2,d} = \lambda_{2}.
\end{equation*}
It will be shown that $\lambda_{2} \asymp n^{-\nu/(2\nu+1)}$ gives a correct choice. 

There are several possible choices for the first step variable selection method. We primarily focus on to use the group Lasso.

{\bf Group Lasso}: Suppose that we have a set of basis functions $\{ \psi_{1},\dots,\psi_{m} \}$ on $[0,1]$ (except for the constant function).
The number $m = m_{n}$ should be taken such that $m \to \infty$ as $n \to \infty$ but $m = o(n)$. It will be shown that $m \asymp n^{1/(2\nu+1)}$ gives an optimal choice.  
We estimate each additive component by a linear combination of basis functions.
Let $\mathcal{G}_{m} := \{ g: [0,1] \to \mathbb{R} : g(\cdot) = c+\sum_{k=1}^{m} \beta_{k} \psi_{k}(\cdot), c \in \mathbb{R}, \beta_{k} \in \mathbb{R}, 1 \leq k \leq m \}$. We consider the estimator:
\begin{multline}
(\hat{c}, \hat{g}_{1}, \dots, \hat{g}_{d}) \\
:= \arg \min_{c \in \mathbb{R}, g_{j} \in \mathcal{G}_{m}, 1 \leq j \leq d} \left [ \frac{1}{2n} \sum_{i=1}^{n} \{ y_{i} - c-  {\textstyle \sum}_{j=1}^{d} g_{j}(z_{ij}) \}^{2} + \frac{\sqrt{m}\lambda_{1}}{n} \sum_{j=1}^{d} \| g_{j} \|_{2,n} \right ],
\label{glasso2}
\end{multline}
subject to the restrictions $\sum_{i=1}^{n} g_{j}(z_{ij}) = 0, \ 1 \leq \forall j \leq d$,
where $\lambda_{1}$ is a nonnegative tuning parameter that controls sparsity of the resulting estimator. The resulting  estimator of $g^{*}$ is given by $\hat{g}(\bm{z}) := \sum_{j=1}^{d} \hat{g}_{j}(z_{j})$. 
It will be shown that 
\begin{equation*}
\lambda_{1} \asymp \max \left \{ \sqrt{n}, \sqrt{\frac{n\log d}{m}} \right \}
\end{equation*}
gives a correct choice. Let $\hat{T}^{0}:= \{ j \in \{ 1,\dots, d \} : \| \hat{g}_{j} \|_{2,n} > 0 \}$.

It is more convenient to concentrate out the constant term when analyzing the estimator $\hat{g}$. Define $\bm{x}_{iG_{j}} := (\psi_{1}(z_{ij}),\dots,\psi_{m}(z_{ij}))'$, $\tilde{\bm{x}}_{iG_{j}} := \bm{x}_{iG_{j}} - \bar{\bm{x}}_{G_{j}} \ (\bar{\bm{x}}_{G_{j}} := n^{-1} \sum_{i=1}^{n} \bm{x}_{iG_{j}})$ for $1 \leq j \leq d$ and $\tilde{\bm{x}}_{i}:=(\tilde{\bm{x}}_{iG_{1}}',\tilde{\bm{x}}_{iG_{2}}',\dots,\tilde{\bm{x}}_{iG_{d}}')'$. Let $\hat{\bm{\Sigma}}_{j}
 := n^{-1} \sum_{i=1}^{n} \tilde{\bm{x}}_{iG_{j}} \tilde{\bm{x}}_{iG_{j}}'$ for $1 \leq j \leq d$ and $\hat{\bm{\Sigma}} := n^{-1} \sum_{i=1}^{n} \tilde{\bm{x}}_{i} \tilde{\bm{x}}_{i}'$. For $\bm{\beta} = (\beta_{11},\dots,\beta_{1m},\beta_{21},\dots,\beta_{dm})' \in \mathbb{R}^{dm}$, we use the notation $\bm{\beta}_{G_{j}} = (\beta_{j1},\dots,\beta_{jm})'$ for $1 \leq j \leq d$.
Working with this notation, it is seen that $\hat{c} = n^{-1} \sum_{i=1}^{n} (y_{i} - \sum_{j=1}^{d} \hat{g}_{j}(z_{ij})) = \bar{y} := n^{-1} \sum_{i=1}^{n} y_{i}$ and  $\hat{g}_{j}(z_{j}) = \sum_{k=1}^{m} \hat{\beta}_{jk} (\psi_{k}(z_{j}) - \bar{\psi}_{jk}) \ (\bar{\psi}_{jk} := n^{-1} \sum_{i=1}^{n} \psi_{k}(z_{ij}); 1 \leq j \leq d, 1 \leq k \leq m)$, where 
\begin{equation}
\hat{\bm{\beta}} := \arg \min_{\bm{\beta} \in \mathbb{R}^{dm}} \left [ \frac{1}{2n} \sum_{i=1}^{n} (y_{i} - \tilde{\bm{x}}_{i}'\bm{\beta})^{2} + 
 \frac{\sqrt{m}\lambda_{1}}{n} \sum_{j=1}^{d} \| \hat{\bm{\Sigma}}_{j}^{1/2} \bm{\beta}_{G_{j}} \|_{E} \right ], \label{glasso}
\end{equation}
where recall that $\| \cdot \|_{E}$ denotes the Euclidean norm. 
Therefore, the estimator $\hat{g}$ is computed by solving the group Lasso problem (\ref{glasso}), and
we call $\hat{g}$ the group Lasso estimator. The group Lasso estimator is known to be groupwise sparse. In the present context, this means that some of additive components are estimated as zero functions.

Some comments are in order. 

\begin{remark}
\label{rem1}
In the group Lasso, there is no need to use common basis functions for all $j$; i.e., we may use different basis functions for different $j$. 
To make the notation simpler (e.g. to avoid the extra index $j$ to $\psi_{1},\dots,\psi_{m}, m$ and $\mathcal{G}_{m}$ etc.), we write the group Lasso procedure as it is. 
\end{remark}

\begin{remark}[Computation] 
Because the proposed method is a combination of two commonly used methods, it  can be implemented by using standard statistical software packages. 
In this sense, implementation of the proposed method is simple. 
\end{remark}

\begin{remark}[Other options for the first step procedure]
Although we primarily focus on the group Lasso for the first step variable selection, it is possible to use other variable selection methods available in the 
literature. For instance, the nonparametric independence screening (NIS) method proposed in \cite{FFS11} is known to be a computationally effective way of screening variables. 
However, in view of  (\ref{ibound}),
to obtain a performance bound on the second step estimator, a suitable bound on the magnitude of missed components $\| \sum_{j \in T^{*} \backslash \hat{T}} g_{j}^{*} \|_{2}^{2}$ is required, and at the moment it is not known whether NIS ensures a reasonable bound on it. 
A preferable feature of the group Lasso is that under certain regularity conditions it gives reasonable bounds on both $|\hat{T}^{0} \backslash T^{*} |$ and $\| \sum_{j \in T^{*} \backslash \hat{T}^{0} }g_{j}^{*} \|_{2}^{2}$ (see Section 4). 
\end{remark}

\begin{remark}[Other options for the second step procedure]
There is an alternative second step estimator of $g^{*}$, namely, the sieve least squares estimator applied to the selected additive components: 
\begin{equation}
(\check{c},\check{g}_{j}, j \in \hat{T}) := \arg \min_{c \in \mathbb{R}, g_{j} \in \mathcal{G}_{m}, j \in \hat{T}} \left [ \frac{1}{2n} \sum_{i=1}^{n} \{ y_{i} - c-  {\textstyle \sum}_{j \in \hat{T}} g_{j}(z_{ij}) \}^{2} \right ],
\label{sl}
\end{equation}
subject to the restrictions $\sum_{i=1}^{n} g_{j}(z_{ij}) = 0, \ \forall j \in \hat{T}$ (note: Remark \ref{rem1} applies to this case). Let $\check{g} := \sum_{j \in \hat{T}} \check{g}_{j}$. It is expected that a similar conclusion (to $\tilde{g}$) holds for this estimator.
In terms of estimation accuracy, it is difficult to judge which is theoretically better. 
To make the paper focused, we restrict our attention to $\tilde{g}$ and not make a formal study of $\check{g}$, but compare their finite sample performance by simulations. 
In our limited simulation experiments, $\tilde{g}$ outperforms $\check{g}$ (see Table \ref{table1} ahead), which is a (partial) motivation of studying $\tilde{g}$. 
\end{remark}

\section{Simulation experiments}

This section reports simulation experiments that evaluate the finite sample performance of the estimators.
The estimators under consideration are the group Lasso (GL) estimator defined by (\ref{glasso2}), 
the sieve least square estimator applied to the variables selected by the group Lasso (called GL-SL estimator) defined by (\ref{sl}) with $\hat{T} = \hat{T}^{0}$, 
the penalized least squares estimator applied to the variables selected by the group Lasso (called GL-PL estimator) defined by (\ref{pl}) with $\hat{T}=\hat{T}^{0}$, the penalized least squares estimator with known true support (called ORACLE estimator) defined by (\ref{pl}) with $\hat{T}=T^{*}$,
the \cite{MVB09} estimator (called MGB estimator).  The MGB estimator is defined by a minimizer to the least square criterion function subject to the penalty (\ref{mvbpenalty}) with $\tilde{\lambda}_{3}=0$. The choice $\tilde{\lambda}_{3}=0$ is not theoretically optimal, but what \cite{MVB09} actually proposed in practice is this estimator, 
so in these experiments we take $\tilde{\lambda}_{3}=0$.

To implement the group Lasso, we have to determine basis functions and the penalty level $\lambda_{1}$. 
We use cubic B-splines with four evenly distributed internal knots (so $m=7$). To choose the penalty level, we use an AIC type criterion.
Let $\hat{g}_{\lambda_{1}}$ denote the GL estimator of $g^{*}$ with penalty level $\lambda_{1}$. We choose the optimal penalty level that minimizes  the criterion
\begin{equation*}
\text{AIC}(\lambda_{1}) = n \log ( {\textstyle \sum}_{i=1}^{n} (y_{i} - \bar{y} - \hat{g}_{\lambda_{1}})^{2}/n) + 2 m | \hat{T}_{\lambda}^{0} |,
\end{equation*}
where $\hat{T}_{\lambda_{1}}^{0} := \{ j \in \{ 1, \dots, d \} : \| \hat{g}_{j,\lambda_{1}} \|_{2,n} \neq 0 \}$. Certainly there are other options to choose the penalty level $\lambda_{1}$, such as the cross validation. Here we use the AIC because it is intuitive and straightforward to implement. 
To compute GL estimates, we use the package {\tt grplasso} in {\tt R}. To compute GL-PL estimates and ORACLE estimates, we use the package {\tt mgcv} in {\tt R} in which the smoothing parameters are automatically optimized according to GCV (by default). See \cite{W06}. Comparison with the MGB estimator is not a standard task since its performance depends on the multiple tuning parameters. To guarantee a fair comparison, according to a preliminary simulation work, 
 we prepared  a set of candidates values for $(\tilde{\lambda}_{1},\tilde{\lambda}_{2})$ and evaluated the performance of the MGB estimator for each $(\tilde{\lambda}_{1},\tilde{\lambda}_{2})$. The set of candidate values is given by 
 \begin{multline*}
 \{ (\tilde{\lambda}_{1},\tilde{\lambda}_{2}) : \tilde{\lambda}_{1} = \check {\lambda}_{1} \times \lambda_{\max} /n, \\
\check{\lambda}_{1} \in \{ 0.12,0.08,0.04,0.02 \}, \tilde{\lambda}_{2} \in \{ 0.05,0.02,0.01,0.005 \} \},
 \end{multline*}
 where $\lambda_{\max}$ is computed by the {\tt lambdamax} option in the {\tt grplasso} package when the minimization problem is transformed to the group Lasso problem.

Each estimator is evaluated by the empirical mean square error (EMSE). Let $\mu_{i} := c^{*} + g^{*}(\bm{z}_{i})$ and for a generic estimator $(\hat{c},\hat{g})$ of $(c^{*},g^{*})$, let $\hat{\mu}_{i} := \hat{c} + \hat{g}(\bm{z}_{i})$. Then, the EMSE is defined as 
\begin{equation*}
\text{EMSE} := \mathrm{E} [n^{-1} \sum_{i=1}^{n} (\hat{\mu}_{i} - \mu_{i})^{2}].
\end{equation*}
For GL and MGB estimators, we compute the average numbers of  numbers of variables selected (NV), false positives (FP) and false negatives (FN). 

The number of Monte Carlo repetitions is $500$. 
We consider the case where $n = 400$ and $d=1,000$.
The explanatory variables $\bm{z}_{i} = (z_{i1},\dots,z_{id})'$ are generated as: $z_{ij} = (w_{ij} + tu_{i})/(1+t)$ for $j=1,\dots,d$, where $u_{i},w_{i1},\dots,w_{id}$ are i.i.d. uniform random variables on $[0,1]$. The parameter $t$ controls correlation between variables, i.e., a larger $t$ implies a larger correlation. 
Three cases $t = 0,0.5$ or $1$ are considered. 
In what follows, let $g_{1}(z) = z, g_{2}(z) = (2z-1)^{2}, g_{3}(z) = \sin (2\pi z)/(2-\sin (2\pi z))$ and $g_{4}(z) = 0.1 \sin (2 \pi z)+0.2 \cos (2\pi z) + 0.3 \sin^{2}(2 \pi z) + 0.4 \cos^{3} (2\pi z) + 0.5 \sin^{4} (2\pi z)$. 
We consider two models.
\begin{description}
\item[Model 1] $y_{i} = 5 g_{1}(z_{i1}) + 3g_{2}(z_{i2}) + 4 g_{3}(z_{i3}) + 6 g_{4}(z_{i4}) + \sqrt{1.74} \epsilon_{i}, \ \epsilon_{i} \sim N(0,1)$. 
\item[Model 2] $y_{i} = 3.5 g_{1}(z_{i1}) + 2.1 g_{2}(z_{i2}) + 2.8 g_{3}(z_{i3}) + 4.2 g_{4}(z_{i4}) + 3.5 g_{1}(z_{i5}) + 2.1 g_{2}(z_{i6}) + 2.8 g_{3}(z_{i7}) + 4.2 g_{4}(z_{i8})+ \sqrt{1.74} \epsilon_{i}, \ \epsilon_{i} \sim N(0,1)$.
\end{description}
The coefficients in model 2 are adjusted in such a way that the variance of  the conditional mean of $y_{i}$ given $\bm{z}_{i}$ is roughly the same as in model 1. These designs are essentially adapted from \cite{MVB09}. 

The simulation results are given in Tables \ref{table1} and \ref{table2}. 
Table \ref{table2} shows the performance of the MGB estimator with the tuning parameters chosen in such a way that the EMSE is minimized, and hence the EMSEs in Table \ref{table2} should be understood as the ideal EMSEs of the MGB estimator. Overall the MGB estimator, with the tuning parameters chosen in such a way that the EMSE is minimized, includes too many redundant variables. This feature is consistent with the simulation study in \cite{FFS11}. 

In model 1, in which the number of nonzero additive components is small ($s^{*} = 4$) and each nonzero additive component has a relatively large signal, the variable selection by the group Lasso works well, and hence the GL-PL estimator performs strictly better than the ideal MGB estimator in the EMSE in all cases.

In model 2, in which  the number of nonzero additive components is large ($s^{*} = 8$) and each nonzero additive component has a relatively small signal (compared with model 1), the performance of the GL-PL deteriorates, especially when $t=1$. When $t=1$, that is, the correlation among $\bm{z}_{i}$ is high, it is difficult to detect the nonzero additive components correctly, and the group Lasso on average does not work very well and the EMSE of the GL-PL estimator is worse than the MGB estimator. However this better performance of the MGB estimator is at the cost of selecting many redundant additive components: on average it includes $75$ redundant additive components. 
It turns out that the performance of the MGB estimator is sensitive to the value of $\tilde{\lambda}_{1}$ and not to $\tilde{\lambda}_{2}$. 
Table \ref{table3} shows the performance of the MGB estimator in model 2 with $t=1$ and $\tilde{\lambda}_{2} = 0.05$, and with different values of $\tilde{\lambda}_{1}$ (the best EMSE among all candidate $(\tilde{\lambda}_{1},\tilde{\lambda}_{2})$ in model 2 with $t=1$  is achieved at $\tilde{\lambda}_{1}=0.02 \times \lambda_{\max}/n, \tilde{\lambda}_{2} = 0.05$, which is the reason why we focus on the $\tilde{\lambda}_{2}=0.05$ case). Increasing $\check{\lambda}_{1} = 0.02$ to $\check{\lambda}_{1} = 0.04$ makes the number of false positives small, on average from $75$ to $9$, but makes the EMSE worse, from $0.528$ to $0.921$. 
Taking this into account, we may see that the GL-PL works reasonable well. 

\begin{table}
\begin{threeparttable}
\caption{Simulation results\tnote{} \label{table1}}
\begin{tabular}{c|cccc|c|c|c}
\hline
 & \multicolumn{4}{c|}{GL} & GL-SL & GL-PL & ORACLE  \\
Case & NV & FP & FN & EMSE & EMSE & EMSE & EMSE  \\
\hline
Model 1 ($t=0$) & 5.07 & 1.07 &	0.00 & 1.333 & 0.398 & 0.160 & 0.110  \\
 & (1.16) & (1.16) & (0.00) & (0.201) & (0.113) & (0.067) & (0.031)   \\
\hline
Model 1 ($t=0.5$) & 5.01 & 1.02 & 0.01 & 0.874 & 0.236 & 0.167 & 0.115  \\
 & (1.28) & (1.28) & (0.09) & (0.161) & (0.100) & (0.073) & (0.029)  \\ 
\hline 
Model 1 ($t=1$) & 5.25 & 1.50 & 0.25 & 1.083 & 0.375 & 0.305 & 0.120  \\
 & (1.96) & (1.72) & (0.52) & (0.234) & (0.239) & (0.247) & (0.031)  \\
\hline
Model 2 ($t=0$) & 10.37 & 2.52 & 0.15 & 2.113 & 0.605 & 0.332 & 0.200  \\
 & (2.20) &	(2.06) & (0.38) & (0.336) &	(0.145) & (0.135) & (0.045)  \\ 
\hline
Model 2 ($t=0.5$) & 8.95 & 1.93 & 0.97 & 1.562 & 0.531 & 0.425 & 0.201  \\
 & (2.31) & (1.83) & (0.87) & (0.30) & (0.180)& (0.187) & (0.043)  \\ 
\hline
Model 2 ($t=1$) & 6.11 & 1.24 & 3.13 & 1.796 & 0.996 & 0.945 & 0.203  \\
& (1.86) & (1.42) & (0.80) & (0.24) & (0.144) & (0.154) & (0.043)  \\ 
\hline 
\end{tabular}
\begin{tablenotes}
\item ``GL''  refers to  the group Lasso estimator, ``GL-SL''  to the group Lasso + sieve least squares estimator, ``GL-PL''  to the group Lasso + penalized least squares estimator, ``ORACLE'' to the penalized least squares estimator with known true support, ``NV''  to the number of selected variables, ``FP'' to the false positive, ``FN'' to the false negative, and ``EMSE'' refers to the empirical mean square error. Standard deviations are given in parentheses.
\end{tablenotes}
\end{threeparttable}
\end{table}

\begin{table}
\begin{threeparttable}
\caption{Simulation results (continued)\tnote{} \label{table2}}
\begin{tabular}{c|cccc}
\hline
 & \multicolumn{4}{|c}{MGB} \\
Case &  NV & FP & FN & EMSE \\
\hline
Model 1 ($t=0$)  & 17.86 & 13.86 & 0.00 & 0.357  \\
 &  (6.08) & (6.08) & (0.00)  & (0.068)  \\
\hline
Model 1 ($t=0.5$) & 13.01 & 9.01 & 0.00 & 0.361 \\
 &  (4.85) & (4.85)  & (0.00) & (0.069) \\ 
\hline 
Model 1 ($t=1$) & 70.19 & 66.19 & 0.00 & 0.349 \\
 & (9.90) & (9.90)  & (0.00) & (0.059) \\
\hline
Model 2 ($t=0$) & 62.53 & 54.53 & 0.00 & 0.553  \\
 & (11.77) & (11.77) & (0.00) & (0.080) \\ 
\hline
Model 2 ($t=0.5$)& 44.33 & 36.33 & 0.00 & 0.532 \\
 & (11.11) & (11.11) & (0.06) & (0.081) \\ 
\hline
Model 2 ($t=1$) & 83.00 & 75.01 & 0.01 & 0.528 \\
& (10.14) & (10.14) & (0.08) & (0.070) \\ 
\hline 
\end{tabular}
\begin{tablenotes}
\item ``MGB'' refers  to the (ideal) \cite{MVB09} estimator, ``NV''  to the number of selected variables, ``FP'' to the false positive, ``FN'' to the false negative, and ``EMSE'' refers to the empirical mean square error. Standard deviations are given in parentheses.
\end{tablenotes}
\end{threeparttable}
\end{table}

\begin{table}
\begin{threeparttable}
\caption{Simulation results for the MGB estimator in model 2 with $t=1$ and $\tilde{\lambda}_{2} = 0.05$, and different values of $\tilde{\lambda}_{1}$ \tnote{} \label{table3}}
\begin{tabular}{c|cccc}
\hline
 & $\check{\lambda}_{1} = 0.12$ &$\check{\lambda}_{1} = 0.08$  & $\check{\lambda}_{1} = 0.04$   & $\check{\lambda}_{1} = 0.02$  \\
 \hline 
 NV & 5.14 (0.99)  & 7.82 (1.94) & 16.77 (4.51) & 83.00 (10.14) \\
 FP & 0.27 (0.55) & 1.41 (1.59)  & 8.89 (4.48) & 75.01 (10.14) \\
 FN & 3.14 (0.78) & 1.60 (1.00) & 0.12 (0.35) & 0.01 (0.08) \\
 EMSE & 2.167 (0.146) & 1.624 (0.145) & 0.921 (0.125) & 0.528 (0.070) \\
 \hline 
\end{tabular}
\begin{tablenotes}
\item  ``NV''  refers to the number of selected variables, ``FP'' to the false positive, ``FN'' to the false negative, and ``EMSE'' refers to the empirical mean square error. Standard deviations are given in parentheses.
\end{tablenotes}
\end{threeparttable}
\end{table}

\section{Theoretical study} 

\subsection{Basic conditions}

In this section, we introduce basic conditions commonly used in the analysis of the first and second step estimators. 
\begin{description}
\item[(C1)] (Restriction on the data generating process) $\{ (y_{i},\bm{z}_{i}')' : i=1,2,\dots \}$ are i.i.d. where the pair $(y_{1},\bm{z}_{1}')'$ satisfies the model (\ref{model}).
\item[(C2)] (Restriction on the (conditional) distribution of $u_{1}$) The distribution of $u_{1}$ is such that either:
\begin{enumerate}
\item[(a)] the support of $u_{1}$ is bounded, or;
\item[(b)] $u_{1} | \bm{z}_{1} \sim N(0,\sigma_{u}(\bm{z}_{1})^{2})$ and $\sigma_{u}(\bm{z}_{1}) \leq \sigma_{u}$ almost surely for some constant $\sigma_{u}$ independent of $n$. 
\end{enumerate} 
\item[(C3)] (Restrictions on the distribution of $\bm{z}_{1}$)
\begin{enumerate}
\item[(i)] The support of $\bm{z}_{1}$ is $[0,1]^{d}$.
\item[(ii)] Let $q_{j}$ denote the density of $z_{1j}$ for each $1 \leq j \leq d$. Then, $q_{j}$ is bounded away from zero on $[0,1]$ uniformly over $1 \leq j \leq d$, i.e., there exists a positive constant $c_{q}$ such that $c_{q} \leq q_{j}$ on $[0,1]$ for all $1 \leq j \leq d$. 
\end{enumerate}
\item[(C4)] (Restriction on smoothness of the additive components) $g_{j}^{*} \in \mathcal{G}$ for all $j \in T^{*}$, where $\mathcal{G} = W_{2}^{\nu}([0,1])$ for some positive integer $\nu$.
\item[(C5)] (Preliminary restrictions on $d$ and $s^{*}$) $d \geq n$, $\log d/n^{2\nu/(2\nu+1)} \to 0$ and $1 \leq s^{*} \leq n$. 
\end{description}

Condition (C1) is a standard assumption. Condition (C2) needs an explanation. 
It turns out that the key property to our rate results in Theorems \ref{thm1} and \ref{thm2} (and indeed to those in \cite{KY10}, \cite{RWY10} and \cite{STS11} as well)
is the normal concentration property (around its mean and given $\bm{z}_{1},\dots,\bm{z}_{n}$) of a random variable of the form $\sup_{t \in \mathcal{T}}  \sum_{i=1}^{n} u_{i} t_{i} $ where $\mathcal{T}$ is a bounded and countable subset of $\mathbb{R}^{n}$ ($\mathcal{T}$ typically depends on $\bm{z}_{1},\dots,\bm{z}_{n}$). In fact, condition (C2) is a primitive sufficient condition that ensures this normal concentration property. 
See Appendix C for more discussion on this condition.
Condition (C3) is standard in the series estimation literature \citep[see e.g.][]{N97}. Condition (C4) restricts the smoothness property of each additive component $g_{j}^{*}$. We exclude the case that $\nu$ is fractional. Condition (C5) is a preliminary restriction on the growth rate of $d$. To make the technical argument simpler, we here assume that $d \geq n$. Because our primal concern is on the ``$d \gg  n$'' case, this restriction does not bind.
The second part of condition (C5) is to restrict $d$ not to grow too fast. The last part of condition (C5) is a natural restriction on $s^{*}$.

\subsection{A generic bound on the second step estimator}

In this section, we present a generic bound on the second step estimator. Although we primarily focus on to use the group Lasso as a first step procedure, 
the result of this section holds for any variable selection method satisfying the high level condition stated below. 

We first prepare some notation.
Let $\mathcal{G}_{j} := \{ g_{j} \in \mathcal{G} : \mathrm{E}[ g_{j}(z_{1j}) ] = 0 \}$. 
For a subset  $T \subset \{ 1,\dots, d \}$, define
\begin{equation*}
\alpha (T) := \inf \left \{ \alpha > 0 : \sum_{j \in T} \| g_{j} \|_{2}^{2} \leq \alpha  \| \sum_{j \in T} g_{j}   \|_{2}^{2}, \ \forall g_{j} \in \mathcal{G}_{j} \ (j \in T) \right \}.
\end{equation*}
The quantity $\alpha (T)^{-1}$ is an analogue of sparse minimum eigenvalues to the infinite dictionary case. It is clear that when $z_{1j}, j \in T$ are independent, $\alpha (T) = 1$, so $\alpha (T)$ measures the dependence among variables $z_{1j}, j \in T$ (recall that each function in $\mathcal{G}_{j}$ is centered such that $\mathrm{E}[g_{j}(z_{1j})] = 0$).  Such a quantity appears in other papers on estimation of high dimensional additive models \citep{KY10,STS11}. Observe that $\alpha (T) \geq 1$ for any non-empty $T \subset \{1,\dots,d \}$.

We introduce a high level condition on $\hat{T}$. Put 
\begin{equation*}
\delta := \delta_{n} := \max \left \{ n^{-\nu/(2\nu+1)}, \sqrt{\frac{\log d}{n}} \right \}.
\end{equation*} 

\begin{description}
\item[(C6)] (Restriction on the set $\hat{T}$) $n^{1/2(2\nu+1)} \delta \alpha (T^{*} \cup \hat{T}) | T^{*} \cup \hat{T} |   = o_{p}(1)$.
\end{description}

Note that under condition (C5), $$n^{1/2(2\nu+1)} \delta = \max \{ n^{-(2\nu-1)/2(2\nu+1)}, \sqrt{\log d/n^{2\nu/(2\nu+1)}} \} \to 0.$$ Condition (C6) requires that $\alpha (T^{*} \cup \hat{T})$ and $| T^{*} \cup \hat{T} |$ are not too large.  
In the canonical case in which $\alpha (T^{*} \cup \hat{T}) \lesssim_{p} 1$ and $| \hat{T} | \lesssim_{p} s^{*}$, condition (C6) is satisfied if $s^{*} = o [ \min \{ n^{(2\nu-1)/ 2(2\nu+1)}, \sqrt{n^{2\nu/(2\nu+1)}/\log d} \}]$. We shall comment that, even when $T^{*}$ were known, a condition analogous to (C6) is needed to obtain 
a reasonable bound on the estimator, so we believe that, as long as $| \hat{T} |$ is stochastically not overly large compared with $s^{*}$, condition (C6) is a reasonable restriction. It will be shown that, when the group Lasso is used as a first step procedure, $| \hat{T}^{0} | \lesssim_{p} s^{*}$.

We are now in position to state the main theorem of this paper. 

\begin{theorem}
\label{thm1}
Assume conditions (C1)-(C6). Take $\lambda_{2}$ such that $\lambda_{2} \geq A_{2,u,\nu} n^{-\nu/(2\nu+1)}$,
where $A_{2,u,\nu}$ is some positive constant depending only on the distribution of $u_{1}$ and the smoothness index $\nu$. Then, we have 
\begin{align*}
&\| \tilde{g} - g^{*} \|^{2}_{2} + \lambda_{2}^{2} \sum_{j \in \hat{T}} I(\tilde{g}_{j})^{2} \\
&\lesssim_{p} \max \Big \{   \alpha (T^{*} \cup \hat{T})| T^{*} \cap \hat{T} | n^{-2\nu/(2\nu+1)}, \alpha (T^{*} \cup \hat{T}) | \hat{T} \backslash T^{*} | \delta^{2}, n^{-2\nu/(2\nu+1)} \| g^{*} \|_{2}^{2} \\
&\qquad \qquad \lambda_{2}^{2} \sum_{j \in T^{*} \cap \hat{T}} I(g_{j}^{*})^{2}, \| {\textstyle \sum}_{j \in T^{*} \backslash \hat{T}} g_{j}^{*} \|_{2}^{2}, n^{-2\nu/(2\nu+1)} \sum_{j \in T^{*} \backslash \hat{T}} I(g_{j}^{*})^{2} \Big \}.
\end{align*}
In particular, in the canonical case in which (i) $\alpha (T^{*} \cup \hat{T}) \lesssim_{p} 1$; (ii) $\| g^{*} \|_{2}^{2} \lesssim s^{*}$; (iii) $\sum_{j \in T^{*}} I(g_{j}^{*})^{2} \lesssim s^{*}$, for $\lambda_{2} \geq A_{2,u,\nu} n^{-\nu/(2\nu+1)}$, 
we have 
\begin{equation}
\| \tilde{g} - g^{*} \|^{2}_{2} + \lambda_{2}^{2} \sum_{j \in \hat{T}} I(\tilde{g}_{j})^{2} \lesssim_{p} \max \left \{ s^{*} \lambda_{2}^{2}, | \hat{T} \backslash T^{*} | \delta^{2}, \| {\textstyle \sum}_{j \in T^{*} \backslash \hat{T}} g_{j}
^{*} \|_{2}^{2} \right \}. \label{ebound}
\end{equation} 
\end{theorem}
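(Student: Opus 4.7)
I would start from the first-order optimality of $\tilde g$: for any feasible competitor $(c, g_j, j\in\hat T)$,
\[
\frac{1}{2n}\sum_{i=1}^n\bigl(y_i - \tilde c - {\textstyle\sum}_{j\in\hat T}\tilde g_j(z_{ij})\bigr)^2 + \lambda_2^2\sum_{j\in\hat T}I(\tilde g_j)^2 \le \frac{1}{2n}\sum_{i=1}^n\bigl(y_i - c - {\textstyle\sum}_{j\in\hat T}g_j(z_{ij})\bigr)^2 + \lambda_2^2\sum_{j\in\hat T}I(g_j)^2.
\]
The natural oracle competitor is $c=c^*$ (the free optimization over $c$ absorbs any empirical centering) and $g_j=\bar g_j^*:=g_j^*-n^{-1}\sum_i g_j^*(z_{ij})$ for $j\in T^*\cap\hat T$, $g_j=0$ otherwise. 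Substituting $y_i=c^*+\sum_{j\in T^*}g_j^*(z_{ij})+u_i$ and expanding both quadratic forms delivers a basic inequality schematically of the form
\[
\|\tilde g - g^*\|_{2,n}^2 + 2\lambda_2^2\sum_{j\in\hat T}I(\tilde g_j)^2 \lesssim \frac{1}{n}\sum_{i=1}^n u_i\bigl(\tilde g - g^{*,\hat T}\bigr)(\bm z_i) + \lambda_2^2\sum_{j\in T^*\cap\hat T}I(g_j^*)^2 + R_n,
\]
where $g^{*,\hat T}:=\sum_{j\in T^*\cap\hat T}\bar g_j^*$ and $R_n$ collects cross-terms involving the missed signal $\sum_{j\in T^*\setminus\hat T}g_j^*$.

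The crux is to bound the empirical process term uniformly over the random $\hat T$. For each deterministic $T\subset\{1,\dots,d\}$, I would bound
\[
\sup\Bigl\{n^{-1}{\textstyle\sum}_i u_i{\textstyle\sum}_{j\in T}h_j(z_{ij}) : h_j\in\mathcal G_j,\ \|{\textstyle\sum}_{j\in T}h_j\|_{2,n}\le r,\ {\textstyle\sum}_{j\in T}I(h_j)^2\le B\Bigr\}
\]
by Dudley-style chaining together with the Sobolev metric-entropy estimate $\log N(\varepsilon, W_2^\nu\cap\{I(\cdot)\le 1\},\|\cdot\|_\infty)\lesssim \varepsilon^{-1/\nu}$; condition (C2) furnishes the conditional sub-Gaussian concentration of this supremum around its mean. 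Localizing around the radius $n^{-\nu/(2\nu+1)}$ per component produces the ``smoothness'' contribution $|T^*\cap\hat T|n^{-2\nu/(2\nu+1)}$ and the penalty contribution $\lambda_2^2\sum_{j\in T^*\cap\hat T}I(g_j^*)^2$. A union bound over all $T\subset\{1,\dots,d\}$ with $|T|\le K$ (for $K\asymp s^*$), through $\log\binom{d}{k}\le k\log(ed/k)$, adds the selection cost $|\hat T\setminus T^*|\delta^2$, which is precisely the mechanism producing the $\sqrt{\log d/n}$ piece of $\delta$ per extraneous component.

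To convert $\|\cdot\|_{2,n}$ into $\|\cdot\|_2$ on the space of centered additive functions supported in $T^*\cup\hat T$, I would apply Bernstein's inequality to the relevant U-statistic-type empirical processes to show the two norms are equivalent up to the factor $\alpha(T^*\cup\hat T)$, uniformly over such $T$; this is precisely what condition (C6) underwrites via the product $n^{1/2(2\nu+1)}\delta\,\alpha(T^*\cup\hat T)|T^*\cup\hat T|=o_p(1)$. The residual $R_n$ is handled by Cauchy--Schwarz combined with the same norm equivalence, which plants $\|\sum_{j\in T^*\setminus\hat T}g_j^*\|_2^2$ and $n^{-2\nu/(2\nu+1)}\sum_{j\in T^*\setminus\hat T}I(g_j^*)^2$ on the right-hand side. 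Taking the worst of the resulting summands yields the generic bound; the canonical case then follows because assumptions $(i)$--$(iii)$ collapse the $\|g^*\|_2^2$ and $\sum_{j\in T^*}I(g_j^*)^2$ contributions into a common $s^*\lambda_2^2$ at the prescribed $\lambda_2\asymp n^{-\nu/(2\nu+1)}$.

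The main obstacle is the \emph{simultaneous uniformity} over the random set $\hat T$: both the chaining bound on the noise cross-term and the empirical-to-population norm comparison must hold over every plausible realization of $\hat T$. This forces a careful combination of the one-group metric-entropy argument (responsible for the smoothness rate $n^{-\nu/(2\nu+1)}$) with a $\binom{d}{k}$-style union bound (responsible for the $\sqrt{\log d/n}$ selection cost), and it is precisely this balance that generates the characteristic $\delta$ per extra selected variable and that makes the result non-trivial despite the random nature of $\hat T$.
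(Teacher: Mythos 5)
Your high-level plan — basic inequality from the optimality of $\tilde g$, bound the noise cross-term uniformly over the realized $\hat T$, convert $\|\cdot\|_{2,n}$ to $\|\cdot\|_2$ via $\alpha(T^*\cup\hat T)$, and invoke (C6) to control the error factors — matches the structure of the paper's argument. But the specific mechanism you describe for producing uniformity over $\hat T$ is different from the paper's and, as written, would not yield the refined bound $|\hat T\setminus T^*|\,\delta^2$ that makes the theorem non-trivial.

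The problem is in the union-bound step. You propose a Dudley-style chaining bound for each fixed $T$ and then a union over all $T\subset\{1,\dots,d\}$ with $|T|\le K$. That union bound imposes a $\log\binom{d}{K}\asymp K\log d$ penalty on \emph{every} coordinate of the realized $\hat T$, so the resulting cost is of order $|\hat T|\,\delta^2$ rather than $|\hat T\setminus T^*|\,\delta^2$. In particular, if $\hat T=T^*$ your bound would still pay $s^*\delta^2$, defeating the whole point of the theorem (adaptation to the oracle rate $s^*n^{-2\nu/(2\nu+1)}$ when selection is correct). You do state separately that localization around radius $n^{-\nu/(2\nu+1)}$ gives $|T^*\cap\hat T|n^{-2\nu/(2\nu+1)}$, but you never explain why the selection cost should be charged only to $\hat T\setminus T^*$ and not to $\hat T\cap T^*$. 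To make this work you would need to explicitly decompose the noise cross-term as $\sum_{j\in T^*\cap\hat T}+\sum_{j\in\hat T\setminus T^*}$ and bound the two pieces by \emph{different} per-coordinate rates: for $j\in T^*$ a union over at most $s^*\le n$ indices (no $\log d$ cost, rate $n^{-\nu/(2\nu+1)}$), and for $j\in (T^*)^c$ a union over up to $d$ indices (rate $\delta$). This is exactly what the paper's Lemma 6.1 does — it is a self-normalized, per-coordinate bound
\[
\Bigl|\tfrac1n\textstyle\sum_i u_i g_j(z_{ij})\Bigr|\le\max\bigl\{\epsilon,\;C_1\sqrt{\log(|T|\vee n)/n}\bigr\}\sqrt{\|g_j\|_2^2+\epsilon^2 I(g_j)^2}\quad\forall g_j\in\mathcal G,\ \forall j\in T,
\]
applied once with $T=T^*$ and once with $T=(T^*)^c$; summing these coordinate-wise bounds across $j\in\hat T$ then gives the separation automatically, whatever $\hat T$ turns out to be, without any union bound over subsets. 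Your subset-wise chaining route is not fatal, but it needs the $T^*$/$(T^*)^c$ split built in at the level of the sup, and your proposal does not do that.

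Two smaller points. First, the paper does not use Bernstein's inequality on U-statistics for the empirical-to-population norm comparison; it uses a Rademacher-symmetrized Talagrand-type concentration (Lemma 6.1(iii)) normalized by $[\sum_j\sqrt{\|g_j\|_2^2+\epsilon^2 I(g_j)^2}]^2$, with (C6) then killing the resulting multiplicative error factors $\hat c_1,\dots,\hat c_4$. Your plan's outcome is the same, but the tool is different and the self-normalized form is what lets one absorb the penalty terms cleanly. Second, you use the re-centered competitor $\bar g_j^*$ to respect the constraint $\sum_i g_j(z_{ij})=0$, but you also need to control the centering of $\tilde g_j$ itself (the gap between empirical and population centering), which the paper handles with the events $\Omega_3,\Omega_4$ (Lemma 6.1(ii)) and which your sketch omits.
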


\begin{remark}
In principle,  it is possible to state the theorem in a non-asymptotic manner; however, to make the exposition clear, we state the theorem as it is. 
\end{remark}

Interestingly, $\lambda_{2}$ can be taken independent of $d$ despite the random fluctuation of $\hat{T}$. This is in contrast to the fact that, e.g. in \cite{KY10,RWY10}, penalty levels (on smoothness) should scale as $\log d$ as $d \to \infty$. 

This theorem characterizes the effect of the first step variable selection in an explicit manner: in (\ref{ebound}), (i) the first term $s^{*} \lambda_{2}^{2}$ reflects the oracle rate; 
(ii) the second term $| \hat{T} \backslash T^{*} | \delta^{2}$ reflects the cost of selecting redundant components; (iii) the third term $ \| {\textstyle \sum}_{j \in T^{*} \backslash \hat{T}} g_{j}
^{*} \|_{2}^{2}$ reflects the magnitude of missed components. We will investigate the behaviors of these terms when the group Lasso is used as a first step procedure.

\subsection{Properties of the group Lasso}

In this section, we collect the statistical properties (namely the convergence rate and the model selection property) of the group Lasso estimator $\hat{g}$ used as a first step estimator. 
Although such properties have been well studied in the literature especially for the parametric regression case \citep{NR08,B08,RLLW09,HZ10,WH10,HHW10,LPTV10,NRWY10}, 
we could not find results that we exactly need in the very present setting, in particular an explicit scaling condition on the triple $(d,s^{*},m)$ that guarantees the statistical properties. For the sake of completeness, we state here these properties. Their proofs are found in Appendix. 

We begin with introducing restrictions on basis functions. 
\begin{description}
\item[(C7)] (Restrictions on basis functions used in the first step estimation)
\begin{enumerate}
\item[(a)] $\sup_{z \in [0,1]} \| (\psi_{1}(z),\dots,\psi_{m}(z))' \|_{E} = O(m^{1/2})$.
\item[(b)] $\mathrm{E}[\tilde{\bm{x}}_{1G_{j}} \tilde{\bm{x}}_{1G_{j}}'] = \bm{I}_{m}$ for all $1 \leq j \leq d$.
\item[(c)] $\inf_{g \in \mathcal{G}_{m}^{T^{*}}} \| g^{*} - g \|_{2}^{2} \lesssim s^{*} m^{-2\nu}$, where $\mathcal{G}_{m}^{T^{*}} := \{ g: \mathcal{Z} \to \mathbb{R} : g(\bm{z}) = \sum_{j \in T^{*}} g_{j}(z_{j}) \ (\bm{z} = (z_{1},\dots,z_{d})'), \ g_{j} \in \mathcal{G}_{m} \ (j \in T^{*}) \}$.
\end{enumerate}
\end{description}

We refer to \cite{N97} for some basic materials on series estimation. Condition (C7)-(a) is satisfied for splines and Fourier bases. 
Condition (C7)-(b) is a normalization condition, and does not lose any generality as long as we are concerned with the analysis of the statistical properties of the group Lasso estimator. Condition (C7)-(c) corresponds to condition (C4) and is thought to be a reasonable restriction. Consider, for instance, $\psi_{1},\dots,\psi_{m}$ are spline functions of degree $(\nu+1)$ on $[0,1]$ with equidistant knots. 
By Corollary 6.26 of \cite{S07}, there exists a 
$g^{m} = \sum_{j \in T^{*}} g_{j}^{m} \in \mathcal{G}_{m}^{T^{*}}$ such that 
$\sum_{j \in T^{*}} \|  g^{*}_{j} - g^{m}_{j} \|_{2}^{2} \lesssim m^{-2\nu} \sum_{j \in T^{*}} I(g^{*}_{j})^{2}$. Because $\mathrm{E}[g^{*}_{j}(z_{1j})] = 0$, $g^{m}_{j}$ may be taken such that $\mathrm{E}[ g^{m}_{j}(z_{1j}) ] = 0$. Therefore, letting for a subset $T \subset \{ 1,\dots, d \}$, 
\begin{equation*}
\beta (T) := \inf \left \{ \beta > 0 : \| \sum_{j \in T} g_{j} \|_{2}^{2} \leq \beta \sum_{j \in T} \| g_{j} \|_{2}^{2}, \ \forall g_{j} \in \mathcal{G}_{j} \ (j \in T) \right \},
\end{equation*}
if $\sum_{j \in T^{*}} I(g_{j})^{2} \lesssim s^{*}$ and $\beta (T^{*}) \lesssim 1$, then $\| g^{*} - g^{m} \|_{2}^{2} \leq \beta(T^{*}) \sum_{j \in T^{*}} \|  g^{*}_{j} - g^{m}_{j} \|_{2}^{2} \lesssim \beta(T) m^{-2\nu} \sum_{j \in T^{*}} I(g^{*}_{j})^{2} \lesssim s^{*} m^{-2\nu}$. 
The restriction that $\sum_{j \in T^{*}} I(g^{*}_{j})^{2} \lesssim s^{*}$ is reasonable. Trivial examples in which $\beta (T^{*}) \lesssim 1$ are the case that $s^{*} \lesssim 1$ or the case that $z_{1j}, j \in T^{*}$ are independent. Conditions similar to $\beta (T^{*}) \lesssim 1$ appear in other papers such as \cite{KY10}.

We now start to investigate the statistical properties of the group Lasso estimator.
To this end, we prepare some notation. 
Define the event 
\begin{equation*}
\Omega_{0} := \{ \| \hat{\bm{\Sigma}}_{j}^{1/2} - \bm{I}_{m} \| \leq 0.5, \ 1 \leq \forall j \leq d \}.
\end{equation*}
We will later give a sufficient condition under which $\mathrm{P}(\Omega_{0}) \to 0$, which means that, with probability approaching one, all $\hat{\bm{\Sigma}}_{j}$ are ``well behaved'' in the sense that they are not too much deviated from their population values. 

Define the set 
\begin{equation*}
\mathbb{C} := \{ \bm{\alpha} \in \mathbb{R}^{dm} : \sum_{j \in (T^{*})^{c}} \| \bm{\alpha}_{G_{j}} \|_{E} \leq 21 \sum_{j \in T^{*}} \| \bm{\alpha}_{G_{j}} \|_{E} \}.
\end{equation*}
The set $\mathbb{C}$ is a cone, i.e., for any $\bm{\alpha} \in \mathbb{C}$ and $c > 0$, $c \bm{\alpha} \in \mathbb{C}$. It consists of vectors $\bm{\alpha} \in \mathbb{R}^{dm}$ such that the coordinates of $\bm{\alpha}$ in the set $T^{*}$ are dominant. Such cones of dominant coordinates play an important role in the analysis of penalization methods for high dimensional statistical models. 
Define the {\em $\mathbb{C}$-restricted eigenvalue} of $\hat{\bm{\Sigma}}^{1/2}$ by 
\begin{equation*}
\hat{\kappa} := \min_{\bm{\alpha} \in \mathbb{S}^{dm-1} \cap \mathbb{C}}  \| \hat{\bm{\Sigma}}^{1/2} \bm{\alpha} \|_{E}.
\end{equation*}
Restricted eigenvalues are originally introduced by \cite{BRT09} for the Lasso formulation. While the minimum eigenvalue of $\hat{\bm{\Sigma}}$ is always zero when $dm \geq n$, $\hat{\kappa}$ can be positive with a high probability as long as the corresponding restricted eigenvalue of the population matrix $\bm{\Sigma}$ is bounded away from zero (see Lemma B.4 in Appendix B). 

Put $\check{\bm{x}}_{iG_{j}} := \hat{\bm{\Sigma}}_{j}^{-1/2} \tilde{\bm{x}}_{iG_{j}}$, where $\hat{\bm{\Sigma}}_{j}^{-1/2}$ is interpreted as the generalized inverse of $\hat{\bm{\Sigma}}_{j}^{1/2}$ if it is singular. If $\hat{\bm{\Sigma}}_{j} = \bm{U} \bm{D} \bm{U}'$ denotes the spectral decomposition of 
$\hat{\bm{\Sigma}}_{j}$ where $\bm{U}$ is an $m \times m$ orthogonal matrix and $\bm{D}$ is an $m \times m$ diagonal matrix with diagonal entries $d_{1} \geq \cdots \geq d_{l} > 0 = d_{l+1}= \cdots = d_{m}$, then $\hat{\bm{\Sigma}}_{j}^{-1/2}$ is given by $\hat{\bm{\Sigma}}_{j}^{-1/2} = \bm{U} \diag \{ d_{1}^{-1/2},\dots,d_{l}^{-1/2},0,\dots,0\} \bm{U}'$. Invoke that on the event $\Omega_{0}$, all $\hat{\bm{\Sigma}}_{j}$ are nonsingular. Define the random variable 
\begin{equation*}
\Lambda := \max_{1 \leq j \leq d} \left \| \sum_{i=1}^{n} u_{i} \check{\bm{x}}_{iG_{j}} / \sqrt{m} \right \|_{E}.
\end{equation*}
This random variable plays a ``threshold''value for $\lambda_{1}$.

We state a preliminary bound on $\hat{g}$ in terms of $\| \cdot \|_{2,n}$. 
\begin{proposition}
\label{prop1}
On the event $\{ \lambda_{1} \geq 2 \Lambda \} \cap \{ \hat{\kappa} > 0 \} \cap \Omega_{0}$, we have
\begin{equation*}
\| g^{*} - \hat{g} \|^{2}_{2,n}  \leq 2 \inf_{g \in \tilde{\mathcal{G}}_{m}^{T^{*}}}  \| g^{*} - g \|^{2}_{2,n} + C_{2} \frac{s^{*}m \lambda_{1}^{2}}{\hat{\kappa}^{2} n^{2}},
\end{equation*}
where $C_{2}$ is a universal constant and $\tilde{\mathcal{G}}_{m}^{T^{*}} := \{ g : \mathcal{Z} \to \mathbb{R} : g (\bm{z}) = \sum_{j \in T^{*}} \sum_{k=1}^{m} \beta_{jk} (\psi_{k}(z_{j}) - \bar{\psi}_{jk}) \ (\bm{z} = (z_{1},\dots,z_{d})'), \ \beta_{jk} \in \mathbb{R} \ (j \in T^{*}; 1 \leq k \leq m) \}$. 
\end{proposition}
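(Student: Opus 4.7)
My plan is to follow the by-now-standard oracle-inequality template for group-Lasso-type estimators, taking care to work with correctly centered quantities so that the intercept concentrates out, and to keep track of the group-wise normalizations by $\hat{\bm{\Sigma}}_j^{1/2}$, which require the event $\Omega_{0}$.

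First, since $\hat c=\bar y$ and every element of $\tilde{\mathcal{G}}_m^{T^*}$ has vanishing sample mean, I would reduce the problem to comparing $\hat g$ with an arbitrary $g\in\tilde{\mathcal{G}}_m^{T^*}$ through the centered response $\tilde y_i=y_i-\bar y=\tilde g^*(\bm z_i)+\tilde u_i$, where $\tilde g^*(\bm z):=g^*(\bm z)-n^{-1}\sum_\ell g^*(\bm z_\ell)$ and $\tilde u_i:=u_i-\bar u$. Writing the basic inequality coming from $\hat{\bm\beta}$ being a minimizer of (\ref{glasso}), expanding squares, and rearranging yields
\[
\tfrac12\|\tilde g^*-\hat g\|_{2,n}^2\le \tfrac12\|\tilde g^*-g\|_{2,n}^2+\langle\tilde u,\hat g-g\rangle_n+\frac{\sqrt m\,\lambda_1}{n}\Bigl(\sum_{j\in T^*}\|g_j\|_{2,n}-\sum_{j=1}^d\|\hat g_j\|_{2,n}\Bigr).
\]
With $\bm\delta:=\hat{\bm\beta}-\bm\beta$ and the identity $\tilde{\bm x}_{iG_j}=\hat{\bm\Sigma}_j^{1/2}\check{\bm x}_{iG_j}$ valid on $\Omega_{0}$, a group-wise Cauchy--Schwarz bounds $|\langle\tilde u,\hat g-g\rangle_n|\le(\sqrt m\,\Lambda/n)\sum_{j=1}^d\|\hat g_j-g_j\|_{2,n}\le(\sqrt m\,\lambda_1/(2n))\sum_{j=1}^d\|\hat g_j-g_j\|_{2,n}$ under $\lambda_1\ge 2\Lambda$. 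Plugging back, using $\|g_j\|_{2,n}-\|\hat g_j\|_{2,n}\le\|\hat g_j-g_j\|_{2,n}$ on $T^*$ and $g_j\equiv 0$ off $T^*$, produces
\[
\|\tilde g^*-\hat g\|_{2,n}^2+\frac{\sqrt m\,\lambda_1}{n}\sum_{j\notin T^*}\|\hat g_j\|_{2,n}\le \|\tilde g^*-g\|_{2,n}^2+\frac{3\sqrt m\,\lambda_1}{n}\sum_{j\in T^*}\|\hat g_j-g_j\|_{2,n}.
\]

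I would then split on whether the approximation-error term or the group-norm term dominates the right-hand side. In the former regime, immediately $\|\tilde g^*-\hat g\|_{2,n}^2\le 2\|\tilde g^*-g\|_{2,n}^2$. Otherwise the display forces the cone-type inequality $\sum_{j\notin T^*}\|\hat{\bm\Sigma}_j^{1/2}\bm\delta_{G_j}\|_E\lesssim \sum_{j\in T^*}\|\hat{\bm\Sigma}_j^{1/2}\bm\delta_{G_j}\|_E$; since on $\Omega_{0}$ the singular values of each $\hat{\bm\Sigma}_j^{1/2}$ lie in $[0.5,1.5]$, the same inequality transfers to the plain Euclidean norms $\|\bm\delta_{G_j}\|_E$ and places $\bm\delta\in\mathbb{C}$ (the factor $21$ in the definition of $\mathbb{C}$ is calibrated to absorb exactly those slack constants). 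The restricted eigenvalue $\hat\kappa$ together with Cauchy--Schwarz then yields
\[
\sum_{j\in T^*}\|\hat g_j-g_j\|_{2,n}\le\sqrt{s^*}\Bigl(\sum_{j\in T^*}\|\hat g_j-g_j\|_{2,n}^2\Bigr)^{1/2}\lesssim\frac{\sqrt{s^*}}{\hat\kappa}\|\hat g-g\|_{2,n},
\]
and combining with $\|\hat g-g\|_{2,n}\le\|\hat g-\tilde g^*\|_{2,n}+\|g-\tilde g^*\|_{2,n}$ plus the elementary step $A^2\le K(A+B)\Rightarrow A^2\le 2K^2+B^2$ (with $K\asymp\sqrt{s^*m}\,\lambda_1/(\hat\kappa n)$) gives $\|\tilde g^*-\hat g\|_{2,n}^2\lesssim \|\tilde g^*-g\|_{2,n}^2+s^*m\lambda_1^2/(\hat\kappa^2 n^2)$. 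Finally, for any $h\in\{\hat g,g\}$ one has $\|g^*-h\|_{2,n}^2=\|\tilde g^*-h\|_{2,n}^2+\bigl(n^{-1}\sum_\ell g^*(\bm z_\ell)\bigr)^2$, so the shift from $\tilde g^*$ to $g^*$ is free in the claimed inequality, and taking the infimum over $g\in\tilde{\mathcal{G}}_m^{T^*}$ delivers the stated bound.

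The main obstacle will be the cone step: one must migrate the cone inequality from the normalized empirical norms $\|\hat{\bm\Sigma}_j^{1/2}\bm\delta_{G_j}\|_E$, in which it naturally appears, to the Euclidean norms in which $\mathbb{C}$ and $\hat\kappa$ are defined, with constants consistent with the $21$ baked into $\mathbb{C}$. The event $\Omega_{0}$ is exactly what enables this transfer, and it also legitimizes the identity $\tilde{\bm x}_{iG_j}=\hat{\bm\Sigma}_j^{1/2}\check{\bm x}_{iG_j}$ used to introduce $\Lambda$ in the noise bound.
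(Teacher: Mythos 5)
Your proposal is correct and is precisely the adaptation of \citet[Theorem~6.1]{BRT09} that the paper invokes without detail: the same centering reduction via $\bar y$ and the centered sieve, the same basic inequality, the same group-wise Cauchy--Schwarz against $\Lambda$, the same two-case split (approximation term dominant vs.\ cone regime), and the same passage from $\|\hat{\bm\Sigma}_j^{1/2}\cdot\|_E$ to $\|\cdot\|_E$ on $\Omega_{0}$ so that $\hat\kappa$ and the constant $21$ in $\mathbb{C}$ can be used. One small inaccuracy worth noting: your ``elementary step'' should read $A^2\le B^2+K(A+B)\Rightarrow A^2\le 2B^2+CK^2$ (the $B^2$ from the RHS of the display is already present before you peel off $K(A+B)$), but this does not affect the conclusion since the extra $B^2$ only improves the constant on the approximation term, and the claimed factor $2$ is still attainable.
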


To state the model selection property of the group Lasso estimator, we need another concept, namely, {\em group sparse eigenvalues}.  
For any subset $T \subset \{ 1,\dots,d \}$, let $\mathbb{S}^{dm-1}_{T} := \{ \bm{\alpha} \in \mathbb{R}^{dm} :  \bm{\alpha}_{G_{T^{c}}} = \bm{0} \} \cap \mathbb{S}^{dm-1}$. Define the {\em $s$-th group sparse maximum eigenvalue} of $\hat{\bm{\Sigma}}^{1/2}$ by
\begin{equation*}
\hat{\phi}_{\max}(s) := \max_{| T | \leq s, \bm{\alpha} \in \mathbb{S}_{T}^{dm-1}} \| \hat{\bm{\Sigma}}^{1/2} \bm{\alpha} \|_{E}.
\end{equation*}
The next proposition gives a preliminary bound on $\hat{s}$, the number of  components selected by the group Lasso estimator $\hat{g}$: 
$\hat{s} := | \hat{T}^{0} | = | \{ j \in \{ 1,\dots, d \} : \| \hat{g}_{j} \|_{2,n} \neq 0 \} |$. 

\begin{proposition}
\label{prop2}
Let $\hat{C} :=  3 n\| g^{*} - \hat{g} \|_{2,n}/(\sqrt{s^{*}m}\lambda_{1})$ and $\mathcal{S} := \{ s \in \{ 1,\dots, d \}: s > 2\hat{C}^{2} \hat{\phi}_{\max}(s)^{2}s^{*} \}$. On the event $\{ \lambda_{1} \geq 2  \Lambda \vee 0 \} \cap \Omega_{0}$, 
we have
\begin{equation*}
\hat{s} \leq \hat{C}^{2}  [ \min_{s \in \mathcal{S}} \hat{\phi}_{\max}(s)^{2}] s^{*}.
\end{equation*}
\end{proposition}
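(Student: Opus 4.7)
The plan is to combine the first-order (KKT) conditions of the group Lasso with the group sparse maximum eigenvalue, and to close the argument by a self-referential sparsity bound.

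First I would write the subgradient conditions for the minimizer $\hat{\bm{\beta}}$ of (\ref{glasso}): for every $j \in \hat{T}^{0}$, since $\hat{\bm{\beta}}_{G_{j}} \neq \bm{0}$ (and $\hat{\bm{\Sigma}}_{j}^{1/2}\hat{\bm{\beta}}_{G_{j}} \neq \bm{0}$ on $\Omega_{0}$), one has
\[
\frac{1}{n}\sum_{i=1}^{n}\tilde{\bm{x}}_{iG_{j}}(y_{i} - \tilde{\bm{x}}_{i}'\hat{\bm{\beta}}) = \frac{\sqrt{m}\lambda_{1}}{n}\,\frac{\hat{\bm{\Sigma}}_{j}\hat{\bm{\beta}}_{G_{j}}}{\|\hat{\bm{\Sigma}}_{j}^{1/2}\hat{\bm{\beta}}_{G_{j}}\|_{E}}.
\]
Using $\sum_{i}\tilde{\bm{x}}_{iG_{j}} = \bm{0}$ and $\tilde{\bm{x}}_{i}'\hat{\bm{\beta}} = \hat{g}(\bm{z}_{i})$, the left side splits into a ``structural'' piece $n^{-1}\sum_{i}\tilde{\bm{x}}_{iG_{j}}(g^{*}(\bm{z}_{i}) - \hat{g}(\bm{z}_{i}))$ and a ``stochastic'' piece $n^{-1}\sum_{i}\tilde{\bm{x}}_{iG_{j}}u_{i}$. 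Pre-multiplying by $\hat{\bm{\Sigma}}_{j}^{-1/2}$, taking Euclidean norms, and applying the reverse triangle inequality (the right side has norm exactly $\sqrt{m}\lambda_{1}/n$, and the stochastic piece is at most $\sqrt{m}\Lambda/n \leq \sqrt{m}\lambda_{1}/(2n)$ on $\{\lambda_{1} \geq 2\Lambda\}$) would yield
\[
\Bigl\|\tfrac{1}{n}\textstyle\sum_{i}\check{\bm{x}}_{iG_{j}}(g^{*}(\bm{z}_{i}) - \hat{g}(\bm{z}_{i}))\Bigr\|_{E} \geq \sqrt{m}\lambda_{1}/(2n), \quad j \in \hat{T}^{0}.
\]

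Squaring and summing over $j \in \hat{T}^{0}$ produces the lower bound $\hat{s}\,m\lambda_{1}^{2}/(4n^{2})$ on a sum of squared Euclidean norms. To get a matching upper bound I would rewrite this sum as $\sup_{\|\bm{v}\|_{E}=1}(n^{-1}\sum_{i}r_{i}\bm{v}'\check{\bm{x}}_{iG_{\hat{T}^{0}}})^{2}$ (with $r_{i} = g^{*}(\bm{z}_{i}) - \hat{g}(\bm{z}_{i})$ and $\bm{v}$ supported on groups in $\hat{T}^{0}$), change variables via $\bm{w}_{G_{j}} := \hat{\bm{\Sigma}}_{j}^{-1/2}\bm{v}_{G_{j}}$, and apply Cauchy--Schwarz in the empirical inner product. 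Because $\bm{w}$ is supported on $\hat{s}$ groups, the group sparse maximum eigenvalue gives $\|\hat{\bm{\Sigma}}^{1/2}\bm{w}\|_{E} \leq \hat{\phi}_{\max}(\hat{s})\|\bm{w}\|_{E}$, and on $\Omega_{0}$ one has $\|\hat{\bm{\Sigma}}_{j}^{-1/2}\| \leq 2$, hence $\|\bm{w}\|_{E} \leq 2\|\bm{v}\|_{E}$. Together these give $\sum_{j \in \hat{T}^{0}}\|n^{-1}\sum_{i}\check{\bm{x}}_{iG_{j}}r_{i}\|_{E}^{2} \leq 4\,\hat{\phi}_{\max}(\hat{s})^{2}\|g^{*} - \hat{g}\|_{2,n}^{2}$. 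Matching this against the lower bound and substituting $\|g^{*} - \hat{g}\|_{2,n}^{2} = \hat{C}^{2}s^{*}m\lambda_{1}^{2}/(9n^{2})$ yields a self-referential inequality $\hat{s} \leq K\,\hat{\phi}_{\max}(\hat{s})^{2}\hat{C}^{2}s^{*}$; a sharper accounting of the triangle and Cauchy--Schwarz steps absorbs $K$ into the constant $3$ chosen in the definition of $\hat{C}$, leaving $\hat{s} \leq \hat{C}^{2}\hat{\phi}_{\max}(\hat{s})^{2}s^{*}$.

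The final step is the standard sparsity-bound device: the group sparse maximum eigenvalue is non-decreasing in $s$ and satisfies sub-additivity of the form $\hat{\phi}_{\max}(ks)^{2} \leq \lceil k \rceil \hat{\phi}_{\max}(s)^{2}$. Fix $s \in \mathcal{S}$ and assume for contradiction that $\hat{s} > s$; sub-additivity applied to the self-referential bound gives $\hat{s} \leq 2\hat{C}^{2}\hat{\phi}_{\max}(s)^{2}s^{*}\cdot(\hat{s}/s)$, i.e., $s \leq 2\hat{C}^{2}\hat{\phi}_{\max}(s)^{2}s^{*}$, contradicting $s \in \mathcal{S}$. Hence $\hat{s} \leq s$ for every $s \in \mathcal{S}$; monotonicity of $\hat{\phi}_{\max}$ then gives $\hat{s} \leq \hat{C}^{2}\hat{\phi}_{\max}(s)^{2}s^{*}$, and taking the minimum over $s \in \mathcal{S}$ produces the stated bound. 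The main difficulty lies in the constant bookkeeping in the middle step, where the $\|\hat{\bm{\Sigma}}_{j}^{-1/2}\| \leq 2$ factor on $\Omega_{0}$ and the $2\Lambda$ cushion must assemble into precisely the $3$ appearing in $\hat{C}$; the sub-additivity of group sparse eigenvalues, while standard, also requires a brief combinatorial verification.
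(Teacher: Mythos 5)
Your proposal is correct and follows essentially the same route as the paper: the KKT conditions for the groups in $\hat{T}^{0}$, isolation of the structural residual term via the threshold $\lambda_{1} \geq 2\Lambda$, a Cauchy--Schwarz step against the group sparse maximum eigenvalue yielding the self-referential bound $\hat{s} \leq \hat{C}^{2}\hat{\phi}_{\max}(\hat{s})^{2}s^{*}$, and finally the standard sub-additivity/contradiction device of \cite{BC09}. The one spot where you hand-wave, the exact constant, is not fully resolved by ``sharper accounting'': with the correct operator-norm bound $\|\hat{\bm{\Sigma}}_{j}^{-1/2}\| \leq 2$ on $\Omega_{0}$ (the paper writes $1.5$ at the analogous step, which appears to be a slip, since $\|\hat{\bm{\Sigma}}_{j}^{1/2}-\bm{I}_{m}\|\leq 0.5$ only forces the smallest eigenvalue of $\hat{\bm{\Sigma}}_{j}^{1/2}$ to be at least $0.5$) the factor in $\hat{C}$ comes out as $4$ rather than $3$, though this has no effect on the way the proposition is used, which is only to conclude $\hat{s} \lesssim_{p} s^{*}$.
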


Propositions \ref{prop1} and \ref{prop2} are deterministic statements, and they do not use any stochastic argument.
In order to bound stochastic orders of $\| g^{*} - \hat{g} \|_{2,n}$ and $\hat{s}$, we have to determine: (i) conditions that ensure $\mathrm{P}(\Omega_{0}) \to 1$; (ii)
a value of $\lambda_{1}$ such that $\lambda_{1} \geq 2  \Lambda$ with probability approaching one; (iii) conditions that ensure desired asymptotic behaviors of $\hat{\kappa}$ and $\hat{\phi}_{\max}(s)$; (iv) an stochastic order of the approximation error $\inf_{g \in \tilde{\mathcal{G}}_{m}^{T^{*}}} \| g^{*} - g \|^{2}_{2,n}$. Lemmas B.1-B.5 in Appendix B are concerned with these issues. We shall comment that while the proofs of Propositions \ref{prop1} and \ref{prop2} are a direct adaptation of the corresponding proofs in the Lasso case, the proofs of Lemmas B.1-B.4 are not the case because the fact that the size ($m$) of each group goes to infinity brings a subtle technical issue. Given Propositions \ref{prop1} and \ref{prop2}, and Lemmas B.1-B.5 in Appendix B, we obtain the following theorem. 

\begin{theorem}
\label{thm2}
Assume conditions (C1)-(C5) and (C7). 
Assume further that $s^{*},m,d$ and $n$ obey the growth condition $(s^{*})^{2}m\log (d \vee n)/n \to 0$;  $\phi_{\max} (s) := \max_{| T | \leq s, \bm{\alpha} \in \mathbb{S}_{T}^{dm-1}} \| \bm{\Sigma}^{1/2} \bm{\alpha} \|_{E} \lesssim 1$ for some sequence $s=s_{n}$ such that $s/s^{*} \to \infty$; $\kappa := \min_{\bm{\alpha} \in \mathbb{S}^{dm-1} \cap \mathbb{C}} \| \bm{\Sigma}^{1/2} \bm{\alpha} \|_{E} \gtrsim 1$; and $\| g^{*} \|_{2}^{2} \lesssim s^{*}$. Take $\lambda_{1} \geq A_{1,u} \sqrt{n}(1+\sqrt{\log d/m})$ with constant $A_{1,u}$ given in Lemma B.2 in Appendix B and $m \gtrsim n^{1/(2\nu+1)}$. Then: 
\begin{equation*}
\| g^{*} - \hat{g} \|^{2}_{2,n} \lesssim_{p} \frac{s^{*} m \lambda_{1}^{2}}{n^{2}}, \ \hat{s} \lesssim_{p} s^{*}.
\end{equation*}
In particular, if $m \asymp n^{1/(2\nu+1)}$ and 
\begin{equation}
\lambda_{1} \asymp \max \left \{ \sqrt{n} , \sqrt{ \frac{n\log d}{m} } \right \}, \label{lambda1}
\end{equation}
then we have $\| g^{*} - \hat{g} \|^{2}_{2,n} \lesssim_{p} s^{*} \delta^{2}$.
\end{theorem}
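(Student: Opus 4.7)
The proof is essentially a matter of verifying that the deterministic bounds in Propositions \ref{prop1} and \ref{prop2} can be activated on an event of probability tending to one, and then plugging in the correct orders. My plan is to assemble the ``good event'' from the concentration statements in Lemmas B.1--B.5, and then substitute into the two propositions.

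\textbf{Step 1 (good event).} On the event described in Lemma B.1, $\|\hat{\bm{\Sigma}}_j^{1/2}-\bm{I}_m\|\le 1/2$ uniformly in $j$, so $\Omega_0$ holds w.p.a.1; this relies on the normalization (C7)-(b). Lemma B.2 gives the tail bound for $\Lambda$ that justifies the threshold $\lambda_1 \ge A_{1,u}\sqrt{n}(1+\sqrt{\log d/m})$, so that $\lambda_1\ge 2\Lambda$ w.p.a.1. Lemma B.4 transfers the population restricted eigenvalue hypothesis $\kappa\gtrsim 1$ to $\hat{\kappa}\gtrsim_p 1$, provided the growth condition $(s^*)^2 m\log(d\vee n)/n\to 0$ holds; similarly $\hat{\phi}_{\max}(s)\lesssim_p \phi_{\max}(s)\lesssim 1$ for some sequence $s=s_n$ with $s/s^*\to\infty$, via the same kind of $\varepsilon$-net/Bernstein argument. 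Finally, Lemma B.5 together with (C7)-(c) gives the empirical approximation bound $\inf_{g\in\tilde{\mathcal{G}}_m^{T^*}}\|g^*-g\|_{2,n}^2\lesssim_p s^* m^{-2\nu}$, using $\|g^*\|_2^2\lesssim s^*$.

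\textbf{Step 2 (rate).} On the intersection of these events, Proposition \ref{prop1} delivers
\begin{equation*}
\|g^*-\hat{g}\|_{2,n}^{2}\;\lesssim_p\; s^* m^{-2\nu} + \frac{s^* m \lambda_1^{2}}{n^{2}}.
\end{equation*}
When $m\asymp n^{1/(2\nu+1)}$, the first term equals $s^* n^{-2\nu/(2\nu+1)}$. Under the calibration (\ref{lambda1}), $m\lambda_1^2/n^2\asymp \max\{m/n,\,\log d/n\}=\max\{n^{-2\nu/(2\nu+1)},\log d/n\}=\delta^2$, so the second term is $s^*\delta^2$ and dominates the first. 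This gives $\|g^*-\hat{g}\|_{2,n}^2\lesssim_p s^* \delta^2$, or equivalently $\lesssim_p s^* m\lambda_1^2/n^2$, proving the stated rate.

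\textbf{Step 3 (sparsity).} With $\hat{C}=3n\|g^*-\hat{g}\|_{2,n}/(\sqrt{s^* m}\lambda_1)$, Step 2 yields $\hat{C}^2=O_p(1)$. Pick the sequence $s=s_n$ supplied by hypothesis, so that $s/s^*\to\infty$ and $\hat{\phi}_{\max}(s)^2\lesssim_p 1$. Because $s/s^*\to\infty$ while $2\hat{C}^2\hat{\phi}_{\max}(s)^2=O_p(1)$, the inclusion $s\in\mathcal{S}$ holds w.p.a.1, and Proposition \ref{prop2} then gives $\hat{s}\le \hat{C}^2\hat{\phi}_{\max}(s)^2 s^*\lesssim_p s^*$.

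\textbf{Main obstacle.} The genuinely non-trivial content lies in Lemmas B.3--B.4, i.e.\ in passing from the population restricted/sparse eigenvalues $\kappa$ and $\phi_{\max}(s)$ to their empirical counterparts. The subtlety that the author flags is that each ``group'' has dimension $m\to\infty$, so one cannot simply invoke the familiar Lasso-style coordinate-wise covers; instead one needs a group-level $\varepsilon$-net on $\mathbb{S}^{m-1}$ (carrying an extra $\log m$-type factor) combined with a Bernstein inequality for the group-sparse quadratic forms, and this is precisely what forces the growth condition $(s^*)^2 m\log(d\vee n)/n\to 0$. Everything else in the proof is bookkeeping: checking the threshold level for $\Lambda$ via the normal concentration property allowed by (C2), and algebraically combining the pieces above.
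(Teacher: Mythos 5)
Your proposal follows the same route as the paper: the theorem is obtained by intersecting the events supplied by Lemmas B.1--B.5 (so that $\Omega_{0}$ holds, $\lambda_{1}\ge 2\Lambda$, $\hat{\kappa}\gtrsim_{p}1$, $\hat{\phi}_{\max}(s)\lesssim_{p}1$ for a suitably chosen $s$ with $s/s^{*}\to\infty$, and the empirical approximation error is controlled), and then substituting into Propositions~\ref{prop1} and~\ref{prop2}; your reduction of $\hat{C}^{2}=O_{p}(1)$ together with $s\in\mathcal{S}$ w.p.a.1\ is exactly how the paper closes the sparsity bound. One small imprecision is worth noting: Lemma~B.5 actually gives $\inf_{g\in\tilde{\mathcal{G}}_{m}^{T^{*}}}\|g^{*}-g\|_{2,n}^{2}\lesssim_{p}s^{*}\max\{m^{-2\nu},n^{-1}\}$ rather than $s^{*}m^{-2\nu}$ alone (the extra $n^{-1}$ comes from recentering $g^{m}$ at the sample mean), but since $\lambda_{1}^{2}\gtrsim n$ forces $n^{-1}\lesssim m\lambda_{1}^{2}/n^{2}$ the final bound is unchanged.
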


\begin{proof}
See Appendix B. 
\end{proof}

%\begin{remark}
%This theorem refines Corollary 1 of \cite{HHW10} in two directions; first, to allow for that $s^{*}$ goes to infinity, and the second, to improve the convergence rate 
%from $n^{-2\nu/(2\nu+1)}\log (d \vee n)$ to $\max \{ n^{-2\nu/(2\nu+1)}, \log d/n \}$ (when $s^{*}$ is fixed). Interestingly, the group Lasso estimator $\hat{g}$ achieves the minimax rate of convergence derived in \cite{RWY10} and \cite{STS11}.
%\end{remark}

\begin{remark}
When $m \asymp n^{\nu/(2\nu+1)}$, the order of $d$ allowed is $\log d=o\{ n^{2\nu/(2\nu+1)}/(s^{*})^{2} \}$. 
If $\log d \asymp n^{a}$ and $s^{*} \asymp n^{b}$ for some $a,b \geq 0$, the region that $(a,b)$ is allowed is $\{ (a,b) : a, b \geq 0, a+2b < 2\nu/(2\nu+1) \}$. It is interesting to note that this region is large when $\nu$ is large, i.e., the additive components are more smooth. This indicates that the more smooth the additive components are, the larger $d$ and $s^{*}$ can be. 
\end{remark}

We consider the magnitude of missed components $\| \sum_{j \in T^{*} \backslash \hat{T}^{0}} g_{j}^{*} \|_{2}^{2}$. To this end, 
for a subset $T \subset \{ 1,\dots, d \}$, define the $T$-sparse minimal eigenvalue $\hat{\phi}_{\min}(T)$ of $\hat{\bm{\Sigma}}$ by 
\begin{equation*}
\hat{\phi}_{\min}(T) := \min_{\bm{\alpha} \in \mathbb{S}_{T}^{dm-1}} \| \hat{\bm{\Sigma}}^{1/2} \bm{\alpha} \|_{E}.
\end{equation*}
We also need a slightly stronger approximation property than condition (C7)-(c). 
\begin{description}
\item[(C7)] (c)' There exists a $g^{m} = \sum_{j \in T^{*}} g_{j}^{m} \in \mathcal{G}_{m}^{T^{*}}$ such that $\max_{T \subset T^{*}} \| \sum_{j \in T} (g_{j} - g_{j}^{m}) \|_{2}^{2} \lesssim s^{*} m^{-2\nu}$.
\end{description}

\begin{corollary}[Magnitude of missed components]
\label{cor1}
Assume the same conditions as in Theorem \ref{thm2} with condition (C7)-(c) replaced by (C7)-(c)'.
Assume further that $\hat{\phi}_{\min} (T^{*} \cup \hat{T}) \gtrsim_{p} 1$. Then, we have $\| \sum_{j \in T^{*} \backslash \hat{T}^{0}} g_{j}^{*} \|_{2}^{2} \lesssim_{p} s^{*} m\lambda_{1}^{2}/n^{2}$.
\end{corollary}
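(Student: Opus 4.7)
The plan is to bridge the missed components through the basis approximation guaranteed by condition (C7)(c)$'$, then pass from the empirical norm (controlled by Theorem \ref{thm2}) back to the population norm by exploiting the sparse eigenvalue conditions. The key observation is that $\hat{g}_{j}=0$ for every $j\in T^{*}\setminus\hat{T}^{0}$, which lets us embed $\sum_{j\in T^{*}\setminus\hat{T}^{0}}g_{j}^{m}$ as a subvector of the coefficient vector of $g^{m}-\hat{g}$, where $g^{m}=\sum_{j\in T^{*}}g_{j}^{m}$ is the approximation from (C7)(c)$'$.

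First I would split
\[
\sum_{j\in T^{*}\setminus\hat{T}^{0}}g_{j}^{*}\;=\;\sum_{j\in T^{*}\setminus\hat{T}^{0}}(g_{j}^{*}-g_{j}^{m})\;+\;\sum_{j\in T^{*}\setminus\hat{T}^{0}}g_{j}^{m}\;=:\;R_{1}+h^{*}.
\]
Condition (C7)(c)$'$, applied to the subset $T=T^{*}\setminus\hat{T}^{0}\subset T^{*}$, immediately gives $\|R_{1}\|_{2}^{2}\lesssim s^{*}m^{-2\nu}$. For the basis-expanded piece $h^{*}$, write $h^{*}=\sum_{j\in T^{*}\setminus\hat{T}^{0}}\sum_{k=1}^{m}\alpha_{jk}(\psi_{k}-\bar{\psi}_{jk})$ with coefficient vector $\bm{\alpha}$ supported on the groups indexed by $T^{*}\setminus\hat{T}^{0}\subset T^{*}\cup\hat{T}^{0}$. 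Because $\hat{g}_{j}=0$ on $T^{*}\setminus\hat{T}^{0}$, the same $\bm{\alpha}$ appears as the restriction of the full coefficient vector $\bm{\gamma}$ of $g^{m}-\hat{g}$ to those groups, so $\|\bm{\alpha}\|_{E}\leq\|\bm{\gamma}\|_{E}$.

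Next I would chain two sparse-eigenvalue comparisons. On the population side, since $\bm{\alpha}$ is supported on $T^{*}\cup\hat{T}^{0}$ and $|T^{*}\cup\hat{T}^{0}|\lesssim_{p}s^{*}$ (using $|\hat{T}^{0}|\lesssim_{p}s^{*}$ from Theorem \ref{thm2} together with $s/s^{*}\to\infty$), the hypothesis $\phi_{\max}(s)\lesssim 1$ yields $\|h^{*}\|_{2}^{2}\leq\phi_{\max}(T^{*}\cup\hat{T}^{0})^{2}\|\bm{\alpha}\|_{E}^{2}\lesssim\|\bm{\alpha}\|_{E}^{2}$. On the empirical side, $\bm{\gamma}$ is also supported on $T^{*}\cup\hat{T}^{0}$, so the assumption $\hat{\phi}_{\min}(T^{*}\cup\hat{T}^{0})\gtrsim_{p}1$ gives $\|\bm{\gamma}\|_{E}^{2}\leq\hat{\phi}_{\min}(T^{*}\cup\hat{T}^{0})^{-2}\|g^{m}-\hat{g}\|_{2,n}^{2}\lesssim_{p}\|g^{m}-\hat{g}\|_{2,n}^{2}$. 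Combining,
\[
\|h^{*}\|_{2}^{2}\;\lesssim_{p}\;\|g^{m}-\hat{g}\|_{2,n}^{2}\;\leq\;2\|g^{*}-g^{m}\|_{2,n}^{2}+2\|g^{*}-\hat{g}\|_{2,n}^{2}.
\]
Theorem \ref{thm2} bounds $\|g^{*}-\hat{g}\|_{2,n}^{2}$ by $s^{*}m\lambda_{1}^{2}/n^{2}$, and the empirical-norm approximation error $\|g^{*}-g^{m}\|_{2,n}^{2}$ is of order $s^{*}m^{-2\nu}$ by the same auxiliary estimate used inside the proof of Theorem \ref{thm2}. Because $\lambda_{1}^{2}\gtrsim n$ and $m\gtrsim n^{1/(2\nu+1)}$, we have $s^{*}m^{-2\nu}\leq s^{*}n^{-2\nu/(2\nu+1)}\lesssim s^{*}m\lambda_{1}^{2}/n^{2}$, so both the $\|R_{1}\|_{2}^{2}$ contribution and the approximation piece are absorbed into $s^{*}m\lambda_{1}^{2}/n^{2}$.

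The main obstacle is the empirical-to-population bridge: it is essential that $T^{*}\cup\hat{T}^{0}$ lie inside a sparse class on which both $\phi_{\max}$ and $\hat{\phi}_{\min}$ can be simultaneously controlled, which is why the stronger version (C7)(c)$'$ is imposed (so that the bound on $R_{1}$ can be taken uniformly over the random subset $T^{*}\setminus\hat{T}^{0}$) and why the corollary explicitly adds $\hat{\phi}_{\min}(T^{*}\cup\hat{T}^{0})\gtrsim_{p}1$ rather than relying only on the restricted-eigenvalue bound already available from Theorem \ref{thm2}. The rest of the argument is algebraic bookkeeping through the decomposition of $g^{m}-\hat{g}$ into its three disjoint-support pieces on $T^{*}\cap\hat{T}^{0}$, $T^{*}\setminus\hat{T}^{0}$ and $\hat{T}^{0}\setminus T^{*}$.
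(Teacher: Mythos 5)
Your proposal is correct and follows essentially the same route as the paper's proof: approximate $g_j^*$ by the spline expansion $g_j^m$ from (C7)(c)$'$, exploit that $\hat{\bm{\beta}}_{G_j}=\bm{0}$ on $T^*\setminus\hat{T}^0$ so the missed-component coefficients are a subvector of $\hat{\bm{\beta}}-\bm{\beta}^*$, pass from $\|\hat{\bm{\beta}}-\bm{\beta}^*\|_E$ to $\|\hat{g}-\tilde{g}^m\|_{2,n}$ via $\hat{\phi}_{\min}(T^*\cup\hat{T}^0)$, bound the latter via Theorem 4.2 and Lemma B.6, and come back to the population $L_2$-norm via $\phi_{\max}$. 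The one bookkeeping point to clean up is the distinction between the population-centered $g^m_j(\cdot)=\sum_k\beta^0_{jk}(\psi_k(\cdot)-\mathrm{E}[\psi_k(z_{1j})])$ and the empirically-centered $\tilde{g}^m_j(\cdot)=\sum_k\beta^*_{jk}(\psi_k(\cdot)-\bar{\psi}_{jk})$: the empirical-eigenvalue step $\|\bm{\gamma}\|_E^2\le\hat{\phi}_{\min}(T^*\cup\hat{T}^0)^{-2}\|\cdot\|_{2,n}^2$ must be applied to $\hat{g}-\tilde{g}^m$ (whose coefficients with respect to the empirically-centered dictionary are exactly $\hat{\bm{\beta}}-\bm{\beta}^*$), while the population step uses $g^m$; both share the coefficient vector $\bm{\beta}^*$, which is what makes the two comparisons compatible, and you implicitly conflate the two when you write $\|g^m-\hat{g}\|_{2,n}$.
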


This corollary clarifies sufficient conditions under which the magnitude of missed components is not larger than the bound on $\| \hat{g} - g^{*} \|_{2,n}^{2}$. 
When $m \asymp n^{1/(2\nu +1)}$ and $\lambda_{1}$ is (\ref{lambda1}), then, under the conditions of Corollary \ref{cor1},
$| \hat{T}^{0} \backslash T^{*} | \lesssim_{p} s^{*}$ and $\| \sum_{j \in T^{*} \backslash \hat{T}^{0}} g_{j}^{*} \|_{2}^{2} \lesssim_{p} s^{*} \delta^{2}$. In that case,  the second step estimator $\tilde{g}$ with $\hat{T} = \hat{T}^{0}$ and $A_{2,u,\nu} n^{-\nu/(2\nu+1)} \leq \lambda_{2} \lesssim \delta$ satisfies that $\| \tilde{g} - g^{*} \|_{2}^{2} + \lambda_{2}^{2} \sum_{j \in \hat{T}} I(\tilde{g}_{j}) \lesssim_{p} s^{*} \delta^{2}$. This bound holds in general cases in which $\hat{T}^{0}$ may fail to recover $T^{*}$. 
If it happens that $\hat{T}^{0} = T^{*}$ with probability approaching one, the estimator $\tilde{g}$ 
(with $\lambda_{2} \asymp n^{-\nu/(2\nu+1)}$) enjoys the exact oracle rate $s^{*} n^{-2\nu/(2\nu+1)}$. As long as taking $\lambda_{2} \asymp n^{-2\nu/(2\nu+1)}$, the estimator $\tilde{g}$ adapts to both situations. 

Sufficient conditions for the perfect model selection are found in, e.g., Theorem 2 of \cite{RLLW09}. Unfortunately, their condition (39) does not cover our 
choice of the penalty level $\lambda_{1}$. Note that the correspondence between their notation (left) and our notation (right) is: $p=d, d_{n} = m$ and $\lambda_{n} = \sqrt{m}\lambda_{1}/n$. However, a careful inspection of their proof shows that their condition (39) can be replaced by a weaker condition that 
there exists some  constant $C > 0$ such that
\begin{equation}
\frac{\lambda_{n}^{2} n}{d_{n} \vee \log p} > C \ (\text{in their notation}), \ \text{or} \ \frac{m\lambda_{1}^{2}}{n(m \vee \log d)} > C \ (\text{in our notation}), \label{pmodel}
\end{equation}
which covers our choice of the penalty level $\lambda_{1}$. To see this, observe that their condition (39) is used only to ensure (85) in their appendix, which can be replaced by (in their notation) $\mathrm{P}(\max_{j \in S^{c}} \| \hat{g}_{j} - \mu_{j} \| > \delta/2) \to 0$, or equivalently $\mathrm{P}(\max_{j \in S^{c}}  \| Z_{j} \| \geq \lambda_{n} \delta/2) \to 0$. By using first the union bound and then Theorem 7.1 of \cite{L01} (the Gaussian concentration inequality) similarly to the proof of our Lemma B.2 in Appendix B, it is shown that condition (\ref{pmodel}) is sufficient for that $\mathrm{P}(\max_{j \in S^{c}}  \| Z_{j} \| \geq \lambda_{n} \delta/2) \to 0$.

\subsection{Comparison with other work} 

In this section, we briefly state connections and differences of the proposed method from some existing estimation methods for high dimensional additive models.
It must be said that the literature on high dimensional additive models is now growing; so it is beyond the scope of this paper to review all the existing methods in details.

In \cite{MVB09},  the penalized least squares estimator defined by the solution to the following minimization problem is proposed:{\footnotesize
\begin{equation*}
\min_{c \in \mathbb{R},  g_{j} \in \mathcal{G}, 1 \leq j \leq d} \left [ \frac{1}{2n}\sum_{i=1}^{n} (y_{i} - c -   \sum_{j=1}^{d} g_{j}(z_{ij}))^{2} + 
\sum_{j=1}^{d}\left \{ \tilde{\lambda}_{1} \sqrt{\| g_{j} \|_{2,n}^{2} + \tilde{\lambda}_{2} I(g_{j})^{2}} + \tilde{\lambda}_{3} I(g_{j})^{2} \right \} \right],
\end{equation*}
 }
where the term $\| g_{j} \|_{2,n}$ controls sparsity while the term $I(g_{j})$ controls smoothness of $g_{j}$.  
\cite{KY10} and \cite{RWY10} considered a doubly penalized estimation method similar to \cite{MVB09} but in a (more general) reproducing kernel Hilbert space (RKHS) formulation.  
\cite{STS11} further analyzed the \cite{MVB09} method and established a faster convergence rate than \cite{MVB09} did in a more general setting. 
The method proposed in this paper is thought to be a method that splits such a ``double penalization'' into two steps, and 
 intends to remove a shrinkage bias caused by simultaneously penalizing sparsity and smoothness.
%  Another difference from \cite{MVB09} is that in their sparsity-smoothness penalty, there are two terms (one is inside the square root and the other is outside the square root) that control smoothness of functions to mainly guarantee the desirable theoretical properties. The theoretical results on the proposed method show that,
% by splitting the overall procedure into two steps, the term $I(g_{j})^{2}$ inside the square root becomes unnecessary. 
% An additional difference from \cite{KY10} and \cite{RWY10} is that the proposed method does not need a boundedness restriction on the function classes. In \cite{KY10}, the minimization problem should be solved over a subset of 
% $\{ g=\sum_{j=1}^{d} g_{j} : \| g \|_{\infty} \leq \const \}$, while in \cite{RWY10}, the minimization problem should be solved over a subset of
% $\{ g=\sum_{j=1}^{d} g_{j} : \| g_{j} \|_{\infty} \leq \const, \ 1 \leq \forall j \leq 1 \}$, and the proofs on the theoretical properties of their estimators
% depends on these boundedness restrictions (more precisely, their boundedness restrictions are characterized by RKHS norms). However, such boundedness restrictions are not preferable, at least in terms of computation. 

\cite{HHW10} proposed a two-step estimation method different from ours. 
Their proposal is to construct consistent estimators of the additive components at the first step, and then to use these consistent estimators to 
apply the adaptive group Lasso, which is a modification of the adaptive Lasso \citep{Z06} to the group Lasso case.
In particular, they proposed to use the group Lasso for the first step estimation. 
To be precise, under the notation of Section 2.2, let $\hat{\bm{\beta}}^{0}$ denote the solution to the group Lasso problem (\ref{glasso}) with $\hat{\bm{\Sigma}}_{j}^{1/2}$ replaced by $\bm{I}_{m}$, and use this group Lasso estimator to construct the weights: $w_{j} := 1/\| \hat{\bm{\beta}}_{G_{j}}^{0} \|_{E}$ (we agree that $1/0 = \infty$).  The adaptive group Lasso estimator is then defined by $\hat{g}^{A}(\bm{z}) = \sum_{j=1}^{d} \hat{g}^{A}_{j}(z_{j}), \ \hat{g}^{A}_{j}(z_{j}) = \sum_{k=1}^{m}\hat{\beta}^{A}_{jk}(\psi_{k}(z_{j}) - \bar{\psi}_{jk})$, where 
\begin{equation*}
\hat{\bm{\beta}}^{A} := \arg \min_{\bm{\beta} \in \mathbb{R}^{dm}} \left [\frac{1}{2n} \sum_{i=1}^{n} (y_{i}-\tilde{\bm{x}}_{i}'\bm{\beta})^{2} + 
\lambda_{A} \sum_{j=1}^{d} w_{j} \| \bm{\beta}_{G_{j}} \|_{E} \right].
\end{equation*}
The adaptive group Lasso can be seen as a post model selection estimator. In fact, since $w_{j} = \infty$ when $\| \hat{\bm{\beta}}_{G_{j}} \|_{E} = 0$, the adaptive group Lasso problem reduces to
\begin{equation*}
\min_{\bm{\beta}_{G_{j}}, j \in \check{T}} \left [\frac{1}{2n} \sum_{i=1}^{n} (y_{i}-\sum_{j \in \check{T}} \tilde{\bm{x}}_{iG_{j}}'\bm{\beta}_{G_{j}})^{2} + \lambda_{A} \sum_{j \in \check{T}} w_{j} \| \bm{\beta}_{G_{j}} \|_{E} \right],
\end{equation*}
where $\check{T} := \{ j \in \{ 1,\dots, d \} : \| \hat{\bm{\beta}}^{0}_{G_{j}} \|_{E} \neq 0 \}$. 
Therefore, their estimation method is similar to ours in some respect. 
Besides the similarity, however, there is a notable difference between two methods. \cite{HHW10} intend to select {\em correctly} the nonzero additive components with probability approaching one by using the group Lasso penalty at both the first and second steps, while our method intends to ensure sparsity and smoothness of the estimator. A point to be noticed is that the analysis of \cite{HHW10} substantially depends on the assumption that the non-zero additive components are well separated from zero in the $L_{2}$-sense, which is, as argued in Introduction, significantly restrictive from a theoretical point of view, and it is this assumption why the adaptive group Lasso estimator can achieve the exact oracle rate in their analysis. Therefore, from a strict theoretical sense, their theoretical result is not directly comparable to ours (and \cite{MVB09}). 

\section{Conclusion}

In this paper we have investigated the two-step estimation of high dimensional additive models. Especially, we have derived a generic performance bound on the second step estimator, and studied the overall performance when the group Lasso is used as a first step variable selection. 
Diving the overall estimation procedure into two steps enables us to help shrinkage bias caused by the double penalization strategy, and we believe that 
the theoretical and numerical properties explored in this paper are useful suggestions to practical applications.

% We end this paper with two discussions. 

\section{Proof of Theorem \ref{thm1}}

The proof of Theorem \ref{thm1} uses the next technical lemma. Its proof is based on a use of empirical process techniques. Define $\mu_{g_{j}} := \mathrm{E}[ g_{j} (z_{1j}) ]$. 

\begin{lemma}
\label{lem1}
Assume conditions (C1)-(C5).
Then, there exist a positive constant $C_{u,\nu}$ depending only on the distribution of $u_{1}$ and the smoothness index $\nu$, and a positive constant $c_{q,\nu}$ depending only on $c_{q}$ (given in condition (C3)) and $\nu$  such that the following holds: for any sequence of nonempty subsets $T = T_{n} \subset \{ 1,\dots, d \}$ and for any sequence of constants $\epsilon = \epsilon_{n} \to 0$ such that $\epsilon \geq C_{u,\nu} n^{-\nu/(2\nu+1)}$, 
we have, with probability approaching one:  
{\footnotesize
\begin{align*}
&(i) \ \left | \frac{1}{n} \sum_{i=1}^{n} u_{i} g_{j}(z_{ij}) \right | \leq \max \{ \epsilon, C_{1} \sqrt{\log (s \vee n)/n} \} \sqrt{\| g_{j} \|_{2}^{2} + \epsilon^{2} I(g_{j})^{2}}, \ \forall g_{j} \in \mathcal{G}, \ \forall j \in T; \\
&(ii) \  \left | \frac{1}{n} \sum_{i=1}^{n} \{ g_{j}(z_{ij}) - \mu_{g_{j}} \} \right | \leq \max \{ \epsilon, C_{1} \sqrt{\log (s \vee n)/n} \} \sqrt{\| g_{j} \|_{2}^{2} + \epsilon^{2} I(g_{j})^{2}}, \ \forall g_{j} \in \mathcal{G}, \ \forall j \in T; \\
&(iii) \
| \| g \|_{2,n}^{2} - \| g \|_{2}^{2} | \leq  c_{q,\nu} \epsilon^{-1/(2\nu)} \max \{ \epsilon, \delta \} \left [ \sum_{j=1}^{d} \sqrt{ \| g_{j} \|_{2}^{2} + \epsilon^{2} I(g_{j})^{2}} \right ]^{2}, \ \forall g = \sum_{j=1}^{d} g_{j}, \ g_{j} \in \mathcal{G},
\end{align*}
}
where $s := s_{n} := | T |$ and $C_{1} > 0$ is a universal constant.
\end{lemma}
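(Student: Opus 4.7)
The plan is to reduce all three bounds to standard maximal inequalities for weighted empirical processes over Sobolev balls, combined with a peeling argument and a union bound over the relevant index sets. The core analytic input is the Birman--Solomyak entropy estimate: writing $\mathcal{B}(r):=\{g\in\mathcal{G}:\|g\|_2^2+\epsilon^2 I(g)^2\le r^2\}$, one has $\log N(\eta,\mathcal{B}(r),\|\cdot\|_2)\lesssim (r/(\epsilon\eta))^{1/\nu}$, and by interpolating between $\|g\|_2\le r$ and $I(g)\le r/\epsilon$ the Sobolev embedding $\|g\|_\infty\lesssim\epsilon^{-1/(2\nu)}\sqrt{\|g\|_2^2+\epsilon^2 I(g)^2}$. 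Condition (C3)(ii) guarantees that on each coordinate $z_{\cdot j}$ the empirical norm $\|\cdot\|_{2,n}$ is equivalent (up to a constant depending only on $c_q$) to the population $L_2$ norm on $\mathcal{B}(r)$ once $r\gtrsim n^{-\nu/(2\nu+1)}$, so the chaining can be carried out in $\|\cdot\|_2$.

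For (i), fix $j\in T$ and consider the weighted process $G_n(g_j):=n^{-1}\sum_{i=1}^n u_ig_j(z_{ij})$. Condition (C2) makes its increments $G_n(g_j)-G_n(g_j')$ sub-Gaussian conditionally on $(z_{ij})_{i=1}^n$, with variance proxy proportional to $\|g_j-g_j'\|_{2,n}^2/n$. Dudley chaining with the above entropy bound yields $\mathrm{E}[\sup_{g_j\in\mathcal{B}(r)}|G_n(g_j)|]\lesssim r\,n^{-\nu/(2\nu+1)}$ for any $r\gtrsim n^{-\nu/(2\nu+1)}$, and Talagrand's concentration inequality under (C2)(a), or the Gaussian concentration inequality under (C2)(b), upgrades this to a high-probability tail bound of the same order. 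A dyadic peeling over the shells $\mathcal{B}(2^{k+1}\epsilon)\setminus\mathcal{B}(2^k\epsilon)$ then delivers the claimed uniform bound $|G_n(g_j)|\lesssim \epsilon\sqrt{\|g_j\|_2^2+\epsilon^2 I(g_j)^2}$, and a union bound over $j\in T$ of cardinality at most $s\le d\le n\vee d$ contributes the additional factor $C_1\sqrt{\log(s\vee n)/n}$. Part (ii) is proved identically, with the weighted process replaced by the centered empirical process $n^{-1}\sum_{i=1}^n\{g_j(z_{ij})-\mu_{g_j}\}$; the only change is that Talagrand's functional inequality is now applied with the uniform envelope $\|g_j\|_\infty\lesssim\epsilon^{-1/(2\nu)}r$.

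Part (iii) is the main obstacle. Expanding $g=\sum_{j=1}^d g_j$ gives
\begin{equation*}
\|g\|_{2,n}^2-\|g\|_2^2=\sum_{j,k=1}^d \left(\frac{1}{n}\sum_{i=1}^n g_j(z_{ij})g_k(z_{ik})-\mathrm{E}[g_j(z_{1j})g_k(z_{1k})]\right).
\end{equation*}
For $g_\ell\in\mathcal{B}(r_\ell)$ with $r_\ell:=\sqrt{\|g_\ell\|_2^2+\epsilon^2 I(g_\ell)^2}$, the product $g_jg_k$ has envelope $\lesssim \epsilon^{-1/\nu}r_jr_k$ and $L_2$-variance $\lesssim \epsilon^{-1/\nu}r_j^2 r_k^2$, while the class of such products inherits the entropy bound of its factor classes. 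Applying Talagrand's inequality uniformly on a dyadic grid of radii for each pair $(j,k)$ and then union-bounding over the $d^2$ pairs (paying only an extra $\sqrt{\log d}$ that is swallowed by $\delta$) yields a pairwise deviation of order $c_{q,\nu}\epsilon^{-1/(2\nu)}\max\{\epsilon,\delta\}\,r_jr_k$. Summing over $(j,k)$ collapses to the quadratic form $\bigl(\sum_{j=1}^d r_j\bigr)^2$, which is the stated right-hand side. The delicate point, and where most of the work is needed, is to keep the pairwise bound \emph{multiplicative} in $r_jr_k$ so that aggregation over the $d^2$ pairs costs only a logarithmic (not polynomial) factor in $d$; this multiplicativity is guaranteed by the scale invariance in $r$ of the entropy bound $(r/(\epsilon\eta))^{1/\nu}$, without which one would only get a useless $d^2\max_\ell r_\ell^2$ loss.
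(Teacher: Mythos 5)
Your route to parts (i) and (ii) is essentially the paper's, just repackaged: where you do chaining with a Birman--Solomyak entropy estimate plus peeling and then concentration, the paper invokes \cite{MVB09} Corollary 5 (its Lemma \ref{lemA3}), which already encapsulates the chaining and peeling. The one technical wrinkle you gloss over is that $\mathcal{B}(r)$ contains the $\nu$-dimensional polynomial subspace with $I(g)=0$, so the entropy estimate $(r/(\epsilon\eta))^{1/\nu}$ does not hold uniformly over $\mathcal{B}(r)$ in the sup norm; the paper handles this by the explicit decomposition $g_j = g_j^{[1]} + g_j^{[2]}$ of Lemma \ref{lemA2}, treating the polynomial piece by a separate (finite-dimensional) maximal inequality. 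Also, the paper derives (ii) from (i) by the symmetrization inequality for probabilities rather than reproving it directly; your direct argument is fine, just slightly longer.

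Part (iii) is where you diverge genuinely from the paper, and the comparison is instructive. You expand $\|g\|_{2,n}^2 - \|g\|_2^2$ into the $d^2$ bilinear pieces $\Delta_{jk}$ and seek a \emph{multiplicative} pairwise bound $|\Delta_{jk}| \lesssim \epsilon^{-1/(2\nu)}\max\{\epsilon,\delta\}\,r_j r_k$, to be aggregated to $(\sum_j r_j)^2$. The paper instead applies Talagrand's (Massart's) concentration \emph{once}, to the full ratio process $Z := \sup_g |\|g\|_{2,n}^2 - \|g\|_2^2|/(\sum_j r_j)^2$, with envelope and weak variance controlled via Gabushin's interpolation (Lemma \ref{lemA1}), and then bounds $\mathrm{E}[Z]$ by symmetrizing and using the deterministic pointwise inequality $|n^{-1}\sum_i \sigma_i g(\bm{z}_i)|/\sum_j r_j \le \max_j |n^{-1}\sum_i\sigma_i g_j(z_{ij})|/r_j$, which reduces the problem to a single maximum over $d$ univariate processes handled by the maximal inequality in \cite{MVB09} Lemma 13. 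That trick is the crux: it sidesteps your $d^2$-fold union bound, the entropy bookkeeping for product classes $\{g_j g_k\}$ (which is genuinely delicate because the sup-norm envelope $\epsilon^{-1/(2\nu)}$ degrades the scale of the product net), and a double peeling in $(r_j,r_k)$. Your route should be salvageable --- your scaling argument for the absorption of the $\sqrt{\log d}$ and $\log d/n$ tails does check out against condition (C5) --- but it trades one application of Talagrand for on the order of $d^2$ of them, and leaves more to verify. The paper's subadditivity-of-ratios reduction is the idea you would be missing if you wanted the cleaner proof.
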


\begin{proof}[Proof of Lemma \ref{lem1}]
See Section Appendix A. 
\end{proof}

\begin{proof}[Proof of Theorem \ref{thm1}]
We first point out that because of the restriction $\sum_{i=1}^{n} g_{j}(z_{ij}) = 0$, by a standard argument, 
we may assume that $\hat{c} = c^{*} = \mathrm{E}[ y_{1} ] = 0$ for the analysis of $\tilde{g}$. 

Let $C_{u,\nu},c_{q,\nu}$ and $C_{1}$ be the constants given in Lemma \ref{lem1}. Take $\epsilon = \epsilon_{n} = C_{u,\nu} n^{-\nu/(2\nu+1)}$ and $\lambda_{2} \geq \sqrt{2} \epsilon$. 
Define the events 
\begin{align*}
&\Omega_{1} := \text{event (i) of Lemma \ref{lem1} with $T=T^{*}$}, \\
&\Omega_{2} := \text{event (i) of Lemma \ref{lem1} with $T=(T^{*})^{c}$}, \\
&\Omega_{3} := \text{event (ii) of Lemma \ref{lem1} with $T=T^{*}$}, \\
&\Omega_{4} := \text{event (ii) of Lemma \ref{lem1} with $T=(T^{*})^{c}$}, \\
&\Omega_{5} := \text{event (iii) of Lemma \ref{lem1}}.
\end{align*}
In what follows, we go through the proof on the events $\cap_{k=1}^{5} \Omega_{k}$. Note that the probability of this event goes to one. 
Because $s^{*} = | T^{*} | \leq n$, we may assume that $C_{1}\sqrt{\log (s^{*} \vee  n)/n} = C_{1} \sqrt{\log n/n} \leq \epsilon$ in events (i) and (ii) of Lemma \ref{lem1} with $T=T^{*}$.  
Let $\varrho := \varrho_{n} := \max \{ \epsilon, C_{1}\sqrt{\log d/n} \}$. Invoke that $\varrho \asymp \delta$. We may assume that $\varrho \leq 1$.  
For $j \notin \hat{T}$, we agree that $\tilde{g}_{j} \equiv 0$. 

Because of the optimality of $\tilde{g}$, 
\begin{equation*}
\frac{1}{2n}\sum_{i=1}^{n} (y_{i} - \sum_{j \in \hat{T}} \tilde{g}_{j}(z_{ij}))^{2} + \lambda_{2}^{2} \sum_{j \in \hat{T}} I(\tilde{g}_{j})^{2} 
\leq \frac{1}{2n}\sum_{i=1}^{n} (y_{i} - \sum_{j \in \hat{T}} g_{j}^{*}(z_{ij}))^{2} + \lambda_{2}^{2} \sum_{j \in \hat{T}} I(g^{*}_{j})^{2}.
\end{equation*}
Then, 
using the relation 
\begin{equation*}
(y_{i} - g(\bm{z}_{i}))^{2} 
=u_{i}^{2} + 2 u_{i}(g^{*}(\bm{z}_{i}) - g(\bm{z}_{i})) + (g^{*}(\bm{z}_{i}) - g(\bm{z}_{i}))^{2},
\end{equation*}
we have (note that $\tilde{g}_{j} \equiv 0$ for $j \notin \hat{T}$)
\begin{align*}
&\frac{1}{2} \| g^{*} - \tilde{g} \|_{2,n}^{2} + \lambda_{2}^{2} \sum_{j \in \hat{T}} I(\tilde{g}_{j})^{2} \\
&\leq   \sum_{j \in \hat{T}} \left [ \frac{1}{n} \sum_{i=1}^{n} u_{i}  \{ \tilde{g}_{j}(z_{ij}) - g_{j}^{*}(z_{ij}) \} \right ] + \lambda_{2}^{2} \sum_{j \in \hat{T}} I(g^{*}_{j})^{2} + \frac{1}{2} \| {\textstyle \sum}_{j \in T^{*} \backslash \hat{T}} g^{*}_{j} \|_{2,n}^{2}.
\end{align*}
Using the facts that $\sqrt{a+b} \leq \sqrt{a} + \sqrt{b}, ab \leq 0.5(a^{2} + b^{2})$ and $(a+b)^{2} \leq 2(a^{2}+b^{2})$,
{\footnotesize
\begin{align*}
&\sum_{j \in T^{*} \cap \hat{T}} \left [ \frac{1}{n} \sum_{i=1}^{n} u_{i}  \{ \tilde{g}_{j}(z_{ij}) - g_{j}^{*}(z_{ij}) \} \right ] \\
&\leq \epsilon \sum_{j \in T^{*} \cap \hat{T}} \sqrt{\| \tilde{g}_{j} - g_{j}^{*} \|_{2}^{2} + \epsilon^{2} I(\tilde{g}_{j} - g_{j}^{*})^{2}} \quad (\because \Omega_{1}) \\
&\leq \epsilon \sum_{j \in T^{*} \cap \hat{T}} \| \tilde{g}_{j} - g_{j}^{*} \|_{2} + \epsilon^{2} \sum_{j \in T^{*} \cap \hat{T}} I(\tilde{g}_{j} - g_{j}^{*}) \\
&\leq \epsilon \sqrt{ | T^{*} \cap \hat{T} |\sum_{j \in T^{*} \cap \hat{T}} \| \tilde{g}_{j} - g_{j}^{*} \|^{2}_{2} } + 0.5 \epsilon^{2} | T^{*} \cap \hat{T} | + 0.5 \epsilon^{2}\sum_{T^{*} \cap \hat{T}} I(\tilde{g}_{j} - g_{j}^{*})^{2}  \\
&\leq \epsilon \sqrt{ | T^{*} \cap \hat{T} |\sum_{j \in T^{*} \cap \hat{T}} \| \tilde{g}_{j} - g_{j}^{*} \|^{2}_{2} } +  0.5 \epsilon^{2} | T^{*} \cap \hat{T} | +  \epsilon^{2}\sum_{j \in T^{*} \cap \hat{T}} I(\tilde{g}_{j})^{2} +  \epsilon^{2}\sum_{j \in T^{*} \cap \hat{T}}I(g_{j}^{*})^{2}.
\end{align*}
}
For any fixed $b > 0$, 
\begin{align*}
\epsilon \sqrt{ | T^{*} \cap \hat{T} |\sum_{j \in T^{*} \cap \hat{T}} \| \tilde{g}_{j} - g_{j}^{*} \|^{2}_{2} } &= \sqrt{ 2b \epsilon^{2}| T^{*} \cap \hat{T} | \times \frac{1}{2b}\sum_{j \in T^{*} \cap \hat{T}} \| \tilde{g}_{j} - g_{j}^{*} \|^{2}_{2} } \\
&\leq b\epsilon^{2}| T^{*} \cap \hat{T} | + \frac{1}{4b} \sum_{j \in T^{*} \cap \hat{T}} \| \tilde{g}_{j} - g_{j}^{*} \|^{2}_{2}.
\end{align*}
Similarly, we have
{\footnotesize
\begin{align*}
&\sum_{j \in \hat{T} \backslash T^{*}} \left [ \frac{1}{n} \sum_{i=1}^{n} u_{i}  \{ \tilde{g}_{j}(z_{ij}) - g_{j}^{*}(z_{ij}) \} \right ] \\
&=\sum_{j \in \hat{T} \backslash T^{*}} \left [ \frac{1}{n} \sum_{i=1}^{n} u_{i}   \tilde{g}_{j}(z_{ij}) \right ] \\
&\leq \varrho \sum_{j \in \hat{T} \backslash T^{*}} \sqrt{\| \tilde{g}_{j}  \|_{2}^{2} + \epsilon^{2} I(\tilde{g}_{j})^{2}} \quad (\because \Omega_{2}) \\
&\leq \varrho \sum_{j \in \hat{T} \backslash T^{*}} \| \tilde{g}_{j}  \|_{2} + \varrho \epsilon \sum_{j \in \hat{T} \backslash T^{*}} I(\tilde{g}_{j}) \\
&\leq \varrho \sqrt{ | \hat{T} \backslash T^{*} |\sum_{j \in \hat{T} \backslash T^{*}} \| \tilde{g}_{j} \|^{2}_{2} } + 0.5 \varrho^{2} | \hat{T} \backslash T^{*} | + 0.5 \epsilon^{2}\sum_{j \in \hat{T} \backslash T^{*}} I(\tilde{g}_{j} )^{2}  \\
&\leq \frac{1}{4b} \sum_{j \in  \hat{T}\backslash T^{*}} \| \tilde{g}_{j} \|^{2}_{2} +  (b + 0.5) \varrho^{2} | \hat{T} \backslash T^{*} | + 0.5  \epsilon^{2}\sum_{j \in \hat{T} \backslash T^{*}} I(\tilde{g}_{j})^{2}.
\end{align*}
}
Thus, we have 
\begin{multline}
\frac{1}{2} \| g^{*} - \tilde{g} \|_{2,n}^{2} + (\lambda_{2}^{2} - \epsilon^{2}) \sum_{j \in \hat{T}} I(\tilde{g}_{j})^{2} \\
\leq \frac{1}{4b} \sum_{j \in  \hat{T}} \| \tilde{g}_{j} - g^{*}_{j} \|^{2}_{2}+ (b + 0.5) (\epsilon^{2} | T^{*} \cap \hat{T} |
+ \varrho^{2} | \hat{T} \backslash T^{*} | )  \\
 + (\lambda_{2}^{2} + \epsilon^{2})\sum_{j \in T^{*} \cap \hat{T}}I(g_{j}^{*})^{2}   + \frac{1}{2} \| {\textstyle \sum}_{j \in T^{*} \backslash \hat{T}} g^{*}_{j} \|_{2,n}^{2}. \label{ineq1}
\end{multline}
 
Recall the definition of $\alpha (T)$. 
Invoke now that 
\begin{align*}
\sum_{j \in  \hat{T}} \| \tilde{g}_{j} - g_{j}^{*} \|^{2}_{2} &\leq \sum_{j \in  T^{*} \cup \hat{T}} \| \tilde{g}_{j} - g_{j}^{*} \|^{2}_{2} \\
&\leq 2\sum_{j \in T^{*} \cup \hat{T}} \| (\tilde{g}_{j} - \mu_{\tilde{g}_{j}})- g_{j}^{*} \|^{2}_{2} + 2\sum_{j \in \hat{T}} \mu_{\tilde{g}_{j}}^{2} \quad (\mu_{g_{j}} := \mathrm{E}[g_{j}(z_{1j})]) \\
&\leq 2 \alpha (T^{*} \cup \hat{T}) \| (\tilde{g} - \mu_{\tilde{g}})- g^{*} \|^{2}_{2} +  2\sum_{j \in \hat{T}} \mu_{\tilde{g}_{j}}^{2} \\
&\leq 2 \alpha (T^{*} \cup \hat{T}) \| \tilde{g} - g^{*} \|^{2}_{2} + 2\sum_{j \in \hat{T}} \mu_{\tilde{g}_{j}}^{2}. 
\end{align*}
Because of the restriction $\sum_{i=1}^{n} \tilde{g}(z_{ij}) = 0$, $\mu_{\tilde{g}_{j}} = - n^{-1}\sum_{i=1}^{n} (\tilde{g}_{j}(z_{ij}) - \mu_{\tilde{g}_{j}})$, so that, because of the event $\Omega_{3}$, for all $j \in T^{*} \cap \hat{T}$, 
\begin{align*}
\mu^{2}_{\tilde{g}_{j}} 
&\leq \epsilon^{2} \| \tilde{g}_{j} \|_{2}^{2} + \epsilon^{4} I(\tilde{g}_{j})^{2} \\
&\leq 2\epsilon^{2} \| \tilde{g}_{j} - g^{*}_{j} \|_{2}^{2}
+ 2\epsilon^{2} \| g_{j}^{*} \|_{2}^{2} + \epsilon^{4} I(\tilde{g}_{j})^{2},
\end{align*}
while because of the event $\Omega_{4}$, for all $j \in \hat{T} \backslash T^{*}$, $\mu^{2}_{\tilde{g}_{j}} 
\leq \varrho^{2} \| \tilde{g}_{j} \|_{2}^{2} + \varrho^{2} \epsilon^{2} I(\tilde{g}_{j})^{2}$.
Thus, noting that $\varrho \geq \epsilon$, 
\begin{multline*}
(1-\max \{ 4 \epsilon^{2}, 2 \varrho^{2} \} ) \sum_{j \in \hat{T}} \| \tilde{g}_{j} - g_{j}^{*} \|^{2}_{2} \\
\leq 2 \alpha (T^{*} \cup \hat{T}) \| \tilde{g} - g^{*} \|^{2}_{2}+4\epsilon^{2} \sum_{j \in T^{*} \cap \hat{T}} \| g_{j}^{*} \|_{2}^{2} + 2\varrho^{2} \epsilon^{2} \sum_{j \in \hat{T}} I(\tilde{g}_{j})^{2},
\end{multline*}
so that for $n$ large enough (such that $ \max \{ 4 \epsilon^{2}, 2 \varrho^{2} \} \leq  0.5$), 
\begin{equation}
\sum_{j \in \hat{T}} \| \tilde{g}_{j} - g_{j}^{*} \|^{2}_{2} \leq 4 \alpha (T^{*} \cup \hat{T}) \| \tilde{g} - g^{*} \|^{2}_{2}+8\epsilon^{2} \sum_{j \in T^{*} \cap \hat{T}} \| g_{j}^{*} \|_{2}^{2} +  4\varrho^{2} \epsilon^{2} \sum_{j \in \hat{T}} I(\tilde{g}_{j})^{2}. \label{ineq2}
\end{equation}
Substituting (\ref{ineq2}) into (\ref{ineq1}), we obtain 
\begin{align}
&\frac{1}{2} \| g^{*} - \tilde{g} \|_{2,n}^{2} + \left ( \lambda_{2}^{2} - \epsilon^{2} - \frac{\varrho^{2}\epsilon^{2}}{b} \right ) \sum_{j \in \hat{T}} I(\tilde{g}_{j})^{2} \notag \\
&\leq \frac{\alpha (T^{*} \cup \hat{T})}{b} \| \tilde{g} - g^{*} \|^{2}_{2}+ (b + 0.5) (\epsilon^{2} | T^{*} \cap \hat{T} | + \varrho^{2} | \hat{T} \backslash T^{*} | )  \notag \\
&\quad +  \frac{2\epsilon^{2}}{b}  \sum_{j \in T^{*} \cap \hat{T}} \| g_{j}^{*} \|_{2}^{2} + (\lambda_{2}^{2} + \epsilon^{2})\sum_{j \in T^{*} \cap \hat{T}}I(g_{j}^{*})^{2} +  \frac{1}{2} \| {\textstyle \sum}_{j \in T^{*} \backslash \hat{T}} g^{*}_{j} \|_{2,n}^{2}. \label{ineq3}
\end{align}

We next consider a lower bound on $\| g^{*} - \tilde{g} \|_{2,n}^{2}$ Observe that 
{\footnotesize
\begin{align*}
&\| g^{*} - \tilde{g} \|_{2,n}^{2} \\
&\geq \| g^{*} - \tilde{g} \|^{2}_{2} - c_{q,\nu} \epsilon^{-1/(2\nu)} \max \{ \epsilon, \delta \} \left [ \sum_{j \in T^{*} \cup \hat{T}} \sqrt{\| g_{j}^{*}-\tilde{g}_{j} \|^{2}_{2} +  \epsilon^{2} I(g_{j}^{*}-\tilde{g}_{j})^{2}} \right]^{2} \quad (\because \Omega_{5})\\
&\geq \| g^{*} - \tilde{g} \|^{2}_{2} - c_{q,\nu}\epsilon^{-1/(2\nu)} \max \{ \epsilon, \delta \} | T^{*} \cup \hat{T} |  \sum_{j \in T^{*} \cup \hat{T}} \{ \| g_{j}^{*}-\tilde{g}_{j} \|^{2}_{2} + \epsilon^{2} I(g_{j}^{*}-\tilde{g}_{j})^{2} \} \\
&\geq \| g^{*}  - \tilde{g} \|^{2}_{2} - c_{q,\nu} \epsilon^{-1/(2\nu)} \max \{ \epsilon, \delta \} | T^{*} \cup \hat{T} | \left \{  \alpha (T^{*} \cup \hat{T})  \| g^{*}-\tilde{g} \|^{2}_{2} + \epsilon^{2} \sum_{j \in T^{*} \cup \hat{T}} I(g_{j}^{*}-\tilde{g}_{j})^{2} \right \} \\
% &\geq (1-\hat{c}_{1}) \| g^{*} - \tilde{g} \|^{2}_{2} - \hat{c}_{2}  \epsilon^{2} \sum_{j \in \hat{T}} I(g_{j}^{*}-\tilde{g}_{j})^{2} \\
&\geq (1-\hat{c}_{1}) \| g^{*} - \tilde{g} \|^{2}_{2} -2 \hat{c}_{2}  \epsilon^{2} \sum_{j \in T^{*}} I(g_{j}^{*})^{2} -2 \hat{c}_{2} \epsilon^{2} \sum_{j \in \hat{T}} I(\tilde{g}_{j})^{2},
\end{align*}
}
where $\hat{c}_{1} :=c_{q,\nu} \epsilon^{-1/(2\nu)} \max \{ \epsilon, \delta \} | T^{*} \cup \hat{T} | \alpha (T^{*} \cup \hat{T})$ and $\hat{c}_{2} := c_{q,\nu}\epsilon^{-1/(2\nu)} \max \{ \epsilon, \delta \} | T^{*} \cup \hat{T}|$. 
Substituting this inequality to (\ref{ineq3}), we have
\begin{align*}
&\left ( \frac{1}{2}-\frac{\hat{c}_{1}}{2} - \frac{\alpha (T^{*} \cup \hat{T})}{b} \right ) \| g^{*} - \tilde{g} \|^{2}_{2} + \left ( \lambda_{2}^{2} - \epsilon^{2} - \frac{\varrho^{2} \epsilon^{2}}{b} - \hat{c}_{2}\epsilon^{2} \right ) \sum_{j \in \hat{T}} I(\tilde{g}_{j})^{2} \\
&\leq (b + 0.5) (\epsilon^{2} | T^{*} \cap \hat{T} | + \varrho^{2} | \hat{T} \backslash T^{*} | )+  \frac{2\epsilon^{2}}{b}  \sum_{j \in T^{*} \cap \hat{T}} \| g_{j}^{*} \|_{2}^{2}  \\
&\quad + (\lambda_{2}^{2} + \epsilon^{2}+\hat{c}_{2}\epsilon^{2})\sum_{j \in T^{*} \cap \hat{T}}I(g_{j}^{*})^{2}+\frac{1}{2} \| {\textstyle \sum}_{j \in T^{*} \backslash \hat{T}} g^{*}_{j} \|_{2,n}^{2} + \hat{c}_{2}\epsilon^{2} \sum_{j \in T^{*} \backslash \hat{T}}I(g_{j}^{*})^{2}.
\end{align*}

We wish to bound the term $\| \sum_{j \in T^{*} \backslash \hat{T}} g_{j}^{*} \|_{2,n}^{2}$. Observe that 
{\footnotesize
\begin{align*}
&\| {\textstyle \sum}_{j \in T^{*} \backslash \hat{T}} g_{j}^{*} \|_{2,n}^{2} \\
&\leq \|{\textstyle \sum}_{j \in T^{*} \backslash \hat{T}} g_{j}^{*} \|^{2}_{2} + c_{q,\nu} \epsilon^{-1/(2\nu)} \max \{ \epsilon, \delta \} \left [ \sum_{j \in T^{*} \backslash \hat{T}} \sqrt{\| g_{j}^{*} \|^{2}_{2} +  \epsilon^{2} I(g_{j}^{*})^{2}} \right]^{2} \quad (\because \Omega_{5})\\
&\leq \|{\textstyle \sum}_{j \in T^{*} \backslash \hat{T}} g_{j}^{*} \|^{2}_{2} + c_{q,\nu} \epsilon^{-1/(2\nu)} \max \{ \epsilon, \delta \} | T^{*} \backslash \hat{T} |  \sum_{j \in T^{*} \backslash \hat{T}} \{ \| g_{j}^{*} \|^{2}_{2} + \epsilon^{2} I(g_{j}^{*})^{2} \} \\
&\leq \|{\textstyle \sum}_{j \in T^{*} \backslash \hat{T}} g_{j}^{*} \|^{2}_{2} + c_{q,\nu} \epsilon^{-1/(2\nu)} \max \{ \epsilon, \delta \} | T^{*} \backslash \hat{T} | \left \{  \alpha (T^{*} \backslash \hat{T})  \| {\textstyle \sum}_{j \in T^{*} \backslash \hat{T}} g_{j}^{*} \|^{2}_{2} + \epsilon^{2} \sum_{j \in T^{*} \backslash \hat{T}} I(g_{j}^{*})^{2} \right \} \\
% &\geq (1-\hat{c}_{1}) \| g^{*} - \tilde{g} \|^{2}_{2} - \hat{c}_{2}  \epsilon^{2} \sum_{j \in \hat{T}} I(g_{j}^{*}-\tilde{g}_{j})^{2} \\
&\leq (1+\hat{c}_{3}) \| {\textstyle \sum}_{j \in T^{*} \backslash \hat{T}} g_{j}^{*} \|^{2}_{2} + \hat{c}_{4} \epsilon^{2} \sum_{j \in T^{*}} I(g_{j}^{*})^{2},
\end{align*}
}
where $\hat{c}_{3} := c_{q,\nu} \epsilon^{-1/(2\nu)} \max \{ \epsilon, \delta \} | T^{*} \backslash \hat{T} | \alpha (T^{*} \backslash \hat{T})$ and $\hat{c}_{4} := c_{q,\nu} \epsilon^{-1/(2\nu)} \max \{ \epsilon, \delta \} | T^{*} \backslash \hat{T} |$. Therefore, we have 
{\footnotesize
\begin{align*}
&\left ( \frac{1}{2}-\frac{\hat{c}_{1}}{2} - \frac{\alpha (T^{*} \cup \hat{T})}{b} \right ) \| g^{*} - \tilde{g} \|^{2}_{2} + \left ( \lambda_{2}^{2} - \epsilon^{2} - \frac{\varrho^{2} \epsilon^{2}}{b} - \hat{c}_{2}\epsilon^{2} \right ) \sum_{j \in \hat{T}} I(\tilde{g}_{j})^{2} \\
&\leq (b + 0.5) (\epsilon^{2} | T^{*} \cap \hat{T} | + \varrho^{2} | \hat{T} \backslash T^{*} | )+  \frac{2\epsilon^{2}}{b}  \sum_{j \in T^{*} \cap \hat{T}} \| g_{j}^{*} \|_{2}^{2} + (\lambda_{2}^{2} + \epsilon^{2}+\hat{c}_{2}\epsilon^{2})\sum_{j \in T^{*} \cap \hat{T}}I(g_{j}^{*})^{2} \\
&\quad + \frac{(1+\hat{c}_{3})}{2} \| {\textstyle \sum}_{j \in T^{*} \backslash \hat{T}} g^{*}_{j} \|_{2}^{2} + \left (\hat{c}_{2} + \frac{\hat{c}_{4}}{2} \right ) \epsilon^{2} \sum_{j \in T^{*} \backslash \hat{T}}I(g_{j}^{*})^{2}.
\end{align*}
}

Taking $b=4 \alpha (T^{*} \cup \hat{T}) \geq 4$ and noting that $\varrho \leq 1$, we have 
{\footnotesize
\begin{align*}
&\left ( \frac{1}{4}-\frac{\hat{c}_{1}}{2} \right ) \| g^{*} - \tilde{g} \|^{2}_{2} + \left \{  \lambda_{2}^{2} - \left ( \frac{5}{4} + \hat{c}_{2} \right )\epsilon^{2} \right  \} \sum_{j \in \hat{T}} I(\tilde{g}_{j})^{2} \\
&\leq  \{ 4 \alpha (T^{*} \cup \hat{T}) + 0.5 \} (\epsilon^{2} | T^{*} \cap \hat{T} | + \varrho^{2} | \hat{T} \backslash T^{*} | ) +  \frac{\epsilon^{2}}{2} \| g^{*} \|_{2}^{2} + (\lambda_{2}^{2} + \epsilon^{2}+\hat{c}_{2}\epsilon^{2})\sum_{j \in T^{*} \cap \hat{T} }I(g_{j}^{*})^{2} \\
&\quad + \frac{(1+\hat{c}_{3})}{2} \| {\textstyle \sum}_{j \in T^{*} \backslash \hat{T}} g^{*}_{j} \|_{2}^{2} + \left (\hat{c}_{2} + \frac{\hat{c}_{4}}{2} \right ) \epsilon^{2} \sum_{j \in T^{*} \backslash \hat{T}}I(g_{j}^{*})^{2},
\end{align*}
}
where we have used the inequality 
\begin{equation*}
\frac{2}{b} \sum_{j \in T^{*} \cap \hat{T}} \| g_{j}^{*} \|_{2}^{2} \leq \frac{2}{b} \sum_{j \in T^{*}} \| g_{j}^{*} \|_{2}^{2}
\leq \frac{2\alpha (T^{*})}{b} \| g^{*} \|_{2}^{2} \leq 0.5 \| g^{*} \|_{2}^{2}.
\end{equation*}
Because $\hat{c}_{1} = o_{p}(1), \hat{c}_{2} = o_{p}(1), \hat{c}_{3} = o_{p}(1)$ and $\hat{c}_{4} = o_{p}(1)$ by condition (C6), we have $\hat{c}_{1} \leq 1/4,\hat{c}_{2} \leq 1/2, \hat{c}_{3} \leq 1$ and $\hat{c}_{4} \leq 1$ with probability approaching one. 
Define the event 
\begin{equation*}
\Omega_{6} := \{ \hat{c}_{1} \leq 1/4, \hat{c}_{2} \leq 1/2, \hat{c}_{3} \leq 1, \hat{c}_{4} \leq 1 \}.
\end{equation*}
Recall that $\lambda^{2}_{2} \geq 2 \epsilon^{2}$. Therefore, on the event $\cap_{k=1}^{6} \Omega_{k}$, we have 
\begin{multline*}
\frac{1}{8} \| g^{*} - \tilde{g} \|^{2}_{2} + \frac{\lambda_{2}^{2}}{4}  \sum_{j \in \hat{T}} I(\tilde{g}_{j})^{2} \leq \{ 4 \alpha (T^{*} \cup \hat{T}) + 0.5 \}   (\epsilon^{2} | T^{*} \cap \hat{T} | + \varrho^{2} | \hat{T} \backslash T^{*} | ) \\
+  \frac{\epsilon^{2}}{2}\| g^{*} \|_{2}^{2} + (\lambda_{2}^{2} + 1.5 \epsilon^{2}) \sum_{j \in T^{*} \cap \hat{T}}I(g_{j}^{*})^{2}
+ \| {\textstyle \sum}_{j \in T^{*} \backslash \hat{T}} g_{j}^{*} \|_{2}^{2} +  \epsilon^{2}  \sum_{j \in T^{*} \backslash \hat{T}}I(g_{j}^{*})^{2}.
\end{multline*}
The desired conclusion follows from the fact that $\mathrm{P}( \cap_{k=1}^{6} \Omega_{k} ) \to 1$.
\end{proof}

\section*{Acknowledgments}

The author acknowledges Dr. Lukas Meier for sharing his codes used in  \cite{MVB09} and Dr. Isamu Nagai for helping the numerical experiments. 
Most of the work was done when the author was visiting Department of Economics, MIT. He greatly acknowledges their hospitality. 
This work was supported by the Grant-in-Aid for Young Scientists (B) (22730179) from the JSPS.

\appendix

\section{Proof of Lemma 6.1}

In the proofs below, we agree that $C$ denotes a universal constant, and its value may change from line to line. 
The same rule applies to Appendix B.

\subsection{Preliminary lemmas}

In this section, we collect some preliminary results used in the proof of Lemma 6.1.
We begin with introducing an interpolation inequality by \cite{G67}.

\begin{lemma}[\cite{G67}]
\label{lemA1}
For any $f \in W_{2}^{\nu}([0,1])$ with positive integer $\nu$, 
\begin{equation*}
\| f \|_{\infty} \leq 
\begin{cases}
K \| f \|_{L_{2}(\lambda)}^{(2\nu-1)/(2\nu)} \| f^{(\nu)} \|_{L_{2}(\lambda)}^{1/(2\nu)}, &\ \text{if} \ \| f^{(\nu)} \|_{L_{2}(\lambda)} \neq 0, \\
K \| f \|_{L_{2}(\lambda)}, &\ \text{if} \ \| f^{(\nu)} \|_{L_{2}(\lambda)} = 0,
\end{cases}
\end{equation*}
where $K$ is a constant independent of $f$, and $\| \cdot \|_{L_{2}(\lambda)}$ denotes the $L_{2}$-norm with respect to the Lebesgue measure $\lambda$ on $[0,1]$.
\end{lemma}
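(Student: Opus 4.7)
The plan is to derive the stated interpolation inequality from the one-dimensional Sobolev embedding $W_2^\nu([0,1]) \hookrightarrow C([0,1])$ together with a rescaling-and-optimization argument. The overall structure is to first establish a crude Sobolev estimate on the whole interval, then tighten it on subintervals of variable length, and finally optimize the length.

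First I would dispose of the degenerate case $\|f^{(\nu)}\|_{L_2(\lambda)} = 0$. Any such $f$ is a polynomial of degree at most $\nu - 1$ on $[0,1]$, and since the space of these polynomials is finite-dimensional, all norms on it are equivalent, so $\|f\|_\infty \leq K_0 \|f\|_{L_2(\lambda)}$ for some $K_0 = K_0(\nu)$. This handles the second branch.

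For the non-degenerate case, the key auxiliary fact is the Sobolev embedding
\begin{equation*}
\|g\|_\infty \leq C_\nu \bigl(\|g\|_{L_2([0,1])} + \|g^{(\nu)}\|_{L_2([0,1])}\bigr), \quad g \in W_2^\nu([0,1]),
\end{equation*}
which I would obtain by a Taylor expansion with integral remainder: write $g(x) = \sum_{k=0}^{\nu-1}\frac{g^{(k)}(a)}{k!}(x-a)^k + \frac{1}{(\nu-1)!}\int_a^x (x-t)^{\nu-1} g^{(\nu)}(t)\,dt$, choose the base point $a$ by the integral mean value theorem so that $|g^{(k)}(a)|$ is controlled by $\|g^{(k)}\|_{L_2}$, and iterate together with Cauchy--Schwarz. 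Given this embedding, for any $x_0 \in [0,1]$ and any $h \in (0,1]$, I can pick a subinterval $[a,a+h]\subset[0,1]$ containing $x_0$ and apply the embedding to the rescaled function $\tilde g(y):= f(a+hy)$ on $[0,1]$. The change-of-variables identities $\|\tilde g\|_{L_2([0,1])} = h^{-1/2}\|f\|_{L_2([a,a+h])}$ and $\|\tilde g^{(\nu)}\|_{L_2([0,1])} = h^{\nu-1/2}\|f^{(\nu)}\|_{L_2([a,a+h])}$ together with $|f(x_0)| \leq \|\tilde g\|_\infty$ yield the pointwise bound
\begin{equation*}
|f(x_0)| \leq C_\nu\bigl(h^{-1/2}\|f\|_{L_2(\lambda)} + h^{\nu-1/2}\|f^{(\nu)}\|_{L_2(\lambda)}\bigr),
\end{equation*}
valid for every $x_0\in[0,1]$ and every admissible $h$.

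The final step is to optimize over $h \in (0,1]$. Equating the two terms gives the candidate $h_\ast := (\|f\|_{L_2(\lambda)}/\|f^{(\nu)}\|_{L_2(\lambda)})^{1/\nu}$; plugging it in, both summands reduce to a constant multiple of $\|f\|_{L_2(\lambda)}^{(2\nu-1)/(2\nu)}\|f^{(\nu)}\|_{L_2(\lambda)}^{1/(2\nu)}$, which is precisely the desired right-hand side. Taking a supremum over $x_0$ finishes the main case. The main obstacle I anticipate is the admissibility constraint $h_\ast \leq 1$: when $\|f\|_{L_2(\lambda)}$ dominates $\|f^{(\nu)}\|_{L_2(\lambda)}$ the optimum would exceed $1$, so one takes $h=1$ and must massage the resulting additive Sobolev bound into the geometric-mean form, possibly at the cost of enlarging $K$. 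This bookkeeping is the only delicate step; the rest is a standard one-dimensional Gagliardo--Nirenberg computation.
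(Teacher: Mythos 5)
The paper offers no proof of this lemma---it is quoted from Gabushin (1967)---so your argument has to stand on its own. The route you take (one-dimensional Sobolev embedding, rescaling to a window of length $h$, then optimizing over $h$) is the standard way to prove such interpolation inequalities, and the computations you record are correct: the change-of-variable exponents $h^{-1/2}$ and $h^{\nu-1/2}$, the optimizer $h_{*}=(\|f\|_{L_{2}(\lambda)}/\|f^{(\nu)}\|_{L_{2}(\lambda)})^{1/\nu}$, and the degenerate polynomial case via finite-dimensional norm equivalence.

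The problem sits exactly at the step you flag and then wave away. When $\|f^{(\nu)}\|_{L_{2}(\lambda)}<\|f\|_{L_{2}(\lambda)}$ the optimizer $h_{*}$ exceeds $1$, you must take $h=1$, and the resulting bound $C_{\nu}(\|f\|_{L_{2}(\lambda)}+\|f^{(\nu)}\|_{L_{2}(\lambda)})\leq 2C_{\nu}\|f\|_{L_{2}(\lambda)}$ cannot be ``massaged'' into the geometric-mean form for any constant $K$: for $f_{\epsilon}(x)=1+\epsilon x^{\nu}$ the left side is $\geq 1$ while $K\|f_{\epsilon}\|_{L_{2}(\lambda)}^{(2\nu-1)/(2\nu)}\|f_{\epsilon}^{(\nu)}\|_{L_{2}(\lambda)}^{1/(2\nu)}=O(\epsilon^{1/(2\nu)})\to 0$. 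So the lemma as literally stated is false on a bounded interval; the homogeneous Landau--Kolmogorov/Gagliardo--Nirenberg form needs the line or half-line, where the scaling argument faces no constraint on $h$. What your argument actually proves is the correct bounded-interval version $\|f\|_{\infty}\leq K(\|f\|_{L_{2}(\lambda)}+\|f\|_{L_{2}(\lambda)}^{(2\nu-1)/(2\nu)}\|f^{(\nu)}\|_{L_{2}(\lambda)}^{1/(2\nu)})$, equivalently $K$ times the maximum of the two branches, and that is all the paper ever uses: in Lemmas A.4, A.5 and part (iii) of Lemma 6.1 the extra additive term is absorbed into $C\epsilon^{-1/(2\nu)}\sqrt{\|g\|_{2}^{2}+\epsilon^{2}I(g)^{2}}$ precisely because $\epsilon\leq 1$ there. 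One further small point: your embedding $\|g\|_{\infty}\leq C_{\nu}(\|g\|_{L_{2}(\lambda)}+\|g^{(\nu)}\|_{L_{2}(\lambda)})$ with only the two extreme norms needs more than ``iterate,'' since the Taylor-remainder argument naturally produces the intermediate norms $\|g^{(k)}\|_{L_{2}(\lambda)}$; subtracting the $L_{2}$-projection onto polynomials of degree $\nu-1$ (as in the paper's Lemma A.2) or invoking Ehrling's lemma closes that loop.
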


We also use the next lemma. For any probability measure $Q$ on $[0,1]$, define the $\nu \times \nu$ matrix 
\begin{equation*}
\bm{\Sigma}_{Q,\nu} := \mathrm{E}_{Z \sim Q} \left [
\begin{pmatrix}
1 \\
Z \\
\vdots \\
Z^{\nu-1}
\end{pmatrix}
 (1,Z,\dots,Z^{\nu-1}) \right ].
\end{equation*}

\begin{lemma}
\label{lemA2}
Let $Q$ be any probability measure on $[0,1]$ such that the matrix $\bm{\Sigma}_{Q,\nu}$ is non-sigular. 
For any $f \in W_{2}^{\nu}([0,1])$ ($\nu$ is a positive integer), there exist functions $f^{[1]}$ and $f^{[2]}$ such that (i) $f=f^{[1]}+f^{[2]}$; 
(ii) $\| f^{[1]} \|_{\infty} \leq \const \times I(f^{[1]})$ (the constant is independent of $f$); (iii) $f^{[2]}$ is a polynomial function on $[0,1]$ of degree $\nu-1$; and (iv) $\int f^{[1]} f^{[2]} dQ =0$. 
\end{lemma}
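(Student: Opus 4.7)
The natural route is to take $f^{[2]}$ to be the $L_2(Q)$-orthogonal projection of $f$ onto the finite-dimensional subspace $\mathcal{P}_{\nu-1}$ of polynomials of degree at most $\nu-1$, and set $f^{[1]} := f - f^{[2]}$. Non-singularity of $\bm{\Sigma}_{Q,\nu}$ guarantees that $1,z,\dots,z^{\nu-1}$ are linearly independent in $L_2(Q)$, so this projection is well defined. Properties (i), (iii), (iv) then hold by construction. Moreover, because $f^{[2]}\in\mathcal{P}_{\nu-1}$ satisfies $(f^{[2]})^{(\nu)}\equiv 0$, we have $I(f^{[1]})=I(f)$, which will be used throughout.

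For (ii), the strategy is first to control $\|f^{[1]}\|_{L_2(\lambda)}$ by $I(f^{[1]})$, and then to apply the interpolation bound in Lemma \ref{lemA1}. To bridge the two inner products, I would introduce the auxiliary Lebesgue-based decomposition $f = \tilde g + \tilde p$, where $\tilde p$ is the $L_2(\lambda)$-projection of $f$ onto $\mathcal{P}_{\nu-1}$. Then $\tilde g$ is $L_2(\lambda)$-orthogonal to all polynomials of degree $<\nu$ and satisfies $\tilde g^{(\nu)}=f^{(\nu)}$, so a standard (iterated) Poincar\'e--Wirtinger inequality yields $\|\tilde g\|_{L_2(\lambda)}\leq C_\nu I(f)$; combining with Lemma \ref{lemA1} gives $\|\tilde g\|_\infty \leq C' I(f)$ for some $C'$ depending only on $\nu$.

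Next, write $f^{[1]} = \tilde g + q$ with $q := \tilde p - f^{[2]} \in \mathcal{P}_{\nu-1}$. Applying the $L_2(Q)$-projection $P_Q$ onto $\mathcal{P}_{\nu-1}$ to both sides and using $P_Q f^{[1]} = 0$ forces $q = -P_Q\tilde g$, so that $\|q\|_{L_2(Q)}\leq \|\tilde g\|_{L_2(Q)}\leq \|\tilde g\|_\infty$. Since $\mathcal{P}_{\nu-1}$ is finite-dimensional and $\bm{\Sigma}_{Q,\nu}$ is non-singular, the $L_2(Q)$ and sup norms are equivalent on $\mathcal{P}_{\nu-1}$; letting $C_Q$ be such that $\|p\|_\infty\leq C_Q\|p\|_{L_2(Q)}$ for every $p\in\mathcal{P}_{\nu-1}$, one concludes $\|q\|_\infty\leq C_Q\|\tilde g\|_\infty\leq C_Q C' I(f)$, and by the triangle inequality
\[
\|f^{[1]}\|_\infty \leq \|\tilde g\|_\infty + \|q\|_\infty \leq (1+C_Q)C' I(f) = (1+C_Q)C' I(f^{[1]}),
\]
which is (ii).

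The one place where real work happens is the passage from $L_2(Q)$-orthogonality (which is what $f^{[1]}$ actually enjoys) to an $L_2(\lambda)$-type tail bound of the form $\|f^{[1]}\|_{L_2(\lambda)}\lesssim I(f^{[1]})$. The maneuver above handles this by splitting into the $\lambda$-orthogonal piece $\tilde g$, for which a classical Poincar\'e/Gabushin estimate applies, and a residual polynomial piece $q$, which lives in the finite-dimensional space $\mathcal{P}_{\nu-1}$ where non-singularity of $\bm{\Sigma}_{Q,\nu}$ converts the trivial bound $\|q\|_{L_2(Q)}\leq\|\tilde g\|_{L_2(Q)}$ into a sup-norm bound with a constant independent of $f$. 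Everything else is triangle-inequality bookkeeping.
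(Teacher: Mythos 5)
Your proof is correct and produces exactly the same decomposition as the paper: both take $f^{[2]}=P_Q f$ and $f^{[1]}=f-P_Q f$, where $P_Q$ is the $L_2(Q)$-orthogonal projection onto $\mathcal{P}_{\nu-1}$; the difference is only in how the sup-norm bound (ii) is established. The paper starts from a Taylor-remainder decomposition $f=R+T$ with $T\in\mathcal{P}_{\nu-1}$, noting that $\|R\|_{\infty}\leq \frac{1}{(\nu-1)!}\int_{0}^{1}|f^{(\nu)}|\leq \frac{1}{(\nu-1)!}I(f)$ follows immediately from the integral form of the Taylor remainder and Cauchy--Schwarz, and then bounds the polynomial correction $P_Q R$ coefficient-by-coefficient using $\bm{\Sigma}_{Q,\nu}^{-1}$. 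You instead take the $L_2(\lambda)$-orthogonal split $f=\tilde g+\tilde p$, bound $\|\tilde g\|_{L_2(\lambda)}\lesssim I(f)$ by a Poincar\'e-type inequality, upgrade this to a sup-norm bound via Lemma \ref{lemA1}, and control the correction $q=-P_Q\tilde g$ by finite-dimensional norm equivalence on $\mathcal{P}_{\nu-1}$. Both are valid, and your norm-equivalence argument is essentially what the paper's ``simple algebra'' bound on the coefficients amounts to. The paper's Taylor route is marginally more elementary in that it avoids invoking Lemma \ref{lemA1} for this step, while yours fits more cleanly with the $L_2$ viewpoint. One caveat: the phrase ``iterated Poincar\'e--Wirtinger'' is a bit imprecise, since orthogonality of $\tilde g$ to $\mathcal{P}_{\nu-1}$ does not propagate to $\tilde g'$; the clean justification is either the Bramble--Hilbert/generalized-Poincar\'e inequality on $\mathcal{P}_{\nu-1}^{\perp}$, or simply the observation that $\|\tilde g\|_{L_2(\lambda)}=\inf_{p\in\mathcal{P}_{\nu-1}}\|f-p\|_{L_2(\lambda)}\leq\|R\|_{\infty}\lesssim I(f)$ with $R$ the Taylor remainder. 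The conclusion you draw from that step is correct.
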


\begin{proof}[Proof (sketch)]
Lemma \ref{lemA2} is used in \cite{MVB09} but without proof. For the sake of completeness, we provide a sketch of the proof. Take any $f \in W_{2}^{n}([0,1])$. It is standard to see that there exist functions $f^{[1]}$ and $f^{[2]}$ such that $f=f^{[1]} + f^{[2]}$, 
 $\| f^{[1]} \|_{\infty} \leq  I(f^{[1]})$ and $f^{[2]}$ is a polynomial function on $[0,1]$ of degree $\nu-1$ (use Taylor's theorem). 
Let $\tilde{f}^{[1]}$ denote the orthogonal projection (in $L_{2}(Q)$) of $f^{[1]}$ onto the space of all polynomial functions of degree $\nu-1$. Then, using the fact that $\bm{\Sigma}_{Q,\nu}$ is non-singular, by a simple algebra, each coefficient of $z^{k} \ (k=0,\dots,\nu-1)$ in $\tilde{f}^{[1]}$ is bounded by $K \| f^{[1]} \|_{\infty} \leq K I(f^{[1]})$, so that $\| \tilde{f}^{[1]} \|_{\infty} \leq K' I(f^{[1]})$ ($K$ and $K'$ are constants independent of $f$). Replacing 
$f^{[1]}$ by $f^{[1]} - \tilde{f}^{[1]}$ and $f^{[2]}$ by $f^{[2]} + \tilde{f}^{[1]}$, we obtain the desired conclusion.  
\end{proof}

It is standard to see that $\bm{\Sigma}_{Q,\nu}$ is non-singular if the density of $Q$ is bounded away from zero on $[0,1]$. 
The next lemma is due to Corollary 5 of \cite{MVB09}, which is basically deduced from an entropy integral argument and a peeling argument (such techniques are described in Chapter 8 of \cite{vdG00}). Recall that 
$I(f)^{2}:={\displaystyle \int_{0}^{1}} f^{(\nu)}(z)^{2} dz$.

\begin{lemma}[Essentially \cite{MVB09}, Corollary 5]
\label{lemA3}
Let $\xi_{1},\dots,\xi_{n}$ be i.i.d. from a distribution $Q$ on $[0,1]$ such that $\bm{\Sigma}_{Q,\nu}$ is non-singular, and let $\sigma_{1},\dots,\sigma_{n}$ be independent Rademacher random variables independent of $\xi_{1},\dots,\xi_{n}$. Let $\mathrm{E}_{\sigma}[ \cdot ]$ denote the conditional expectation with respect to $\sigma_{1},\dots,\sigma_{n}$ given $\xi_{1},\dots,\xi_{n}$. Then, there exists a positive constant $C_{\nu}$ depending only on $\nu$ such that 
for all $\epsilon \geq C_{\nu} n^{-\nu/(2\nu+1)}$, 
{\footnotesize
\begin{equation*}
\mathrm{E}\left [ \sup_{f \in W_{2}^{\nu}([0,1])} \frac{| n^{-1} \sum_{i=1}^{n} \sigma_{i} f(\xi_{i}) |}{\sqrt{ \| f \|_{2}^{2} + \epsilon^{2} I(f)^{2}}} \right ] \leq C_{\nu} \epsilon, \ 
\mathrm{E}_{\sigma} \left [ \sup_{\substack{f \in W_{2}^{\nu}([0,1]) \\ I(f) \leq 1, \| f \|_{\infty} \leq 1}} \frac{| n^{-1} \sum_{i=1}^{n} \sigma_{i} f(\xi_{i}) |}{\sqrt{ \| f \|_{2,n}^{2} + \epsilon^{2} I(f)^{2}}} \right ] \leq C_{\nu} \epsilon,
\end{equation*}
}
where $\| f \|^{2}_{2,n} := n^{-1} \sum_{i=1}^{n} f(\xi_{i})^{2}$ and $\| f \|_{2}^{2} := \mathrm{E}[ f(\xi_{1})^{2}]$. The conclusion is true when $\sigma_{1},\dots,\sigma_{n}$ are independent standard normal. 
\end{lemma}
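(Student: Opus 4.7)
The plan is to prove both inequalities by a classical entropy-plus-peeling argument, relying on the decomposition from Lemma \ref{lemA2} together with the known Birman--Solomjak estimate $\log N(\eta, \mathcal{F}_0, \|\cdot\|_\infty) \lesssim \eta^{-1/\nu}$ for $\mathcal{F}_0 := \{ f \in W_2^\nu([0,1]) : I(f) \leq 1, \|f\|_\infty \leq 1 \}$. Condition (C3) (via the density lower bound, applied to $Q$) ensures that $\bm{\Sigma}_{Q,\nu}$ is non-singular, so Lemma \ref{lemA2} applies; write any $f \in W_2^\nu$ as $f = f^{[1]} + f^{[2]}$ with $f^{[2]}$ a polynomial of degree $\nu-1$, $\int f^{[1]} f^{[2]} dQ = 0$, and $\|f^{[1]}\|_\infty \lesssim I(f^{[1]}) = I(f)$. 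The orthogonality gives $\|f\|_2^2 + \epsilon^2 I(f)^2 = \|f^{[1]}\|_2^2 + \epsilon^2 I(f^{[1]})^2 + \|f^{[2]}\|_2^2$, so it suffices to handle the two pieces separately.

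For the polynomial part, $f^{[2]}$ ranges over the fixed $\nu$-dimensional space $\mathcal{P}_{\nu-1}$; expanding in an $L_2(Q)$-orthonormal basis and applying Cauchy--Schwarz gives
\begin{equation*}
\mathrm{E}\sup_{f^{[2]} \in \mathcal{P}_{\nu-1} \setminus \{0\}} \frac{| n^{-1} \sum_i \sigma_i f^{[2]}(\xi_i)|}{\|f^{[2]}\|_2} \lesssim \sqrt{\nu/n},
\end{equation*}
which is bounded by $\epsilon$ for any $\epsilon \geq C_\nu n^{-\nu/(2\nu+1)}$. For the smooth part, I would peel along dyadic shells indexed by $k \geq 0$ on which $\sqrt{\|f^{[1]}\|_2^2 + \epsilon^2 I(f^{[1]})^2} \asymp 2^k \epsilon$. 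On such a shell $I(f^{[1]}) \leq 2^k$ and (by Lemma \ref{lemA2}) $\|f^{[1]}\|_\infty \lesssim 2^k$, so after dividing by $2^k$ the function lies in a fixed multiple of $\mathcal{F}_0$. Dudley's entropy integral using the $\eta^{-1/\nu}$ entropy bound yields
\begin{equation*}
\mathrm{E}\sup \Big| n^{-1} \sum_i \sigma_i f^{[1]}(\xi_i) \Big| \lesssim 2^k n^{-\nu/(2\nu+1)}
\end{equation*}
on the shell. Dividing by the shell denominator $2^k \epsilon$ and summing the geometric series gives the required $O(\epsilon)$ bound once $\epsilon \geq C_\nu n^{-\nu/(2\nu+1)}$, which is precisely the fixed point of the Dudley equation for $\mathcal{F}_0$.

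The second inequality is structurally easier because one conditions on $\xi_1,\dots,\xi_n$, making $\|f\|_{2,n}$ deterministic, and restricts a priori to $I(f) \leq 1$, $\|f\|_\infty \leq 1$. The conditional covering numbers under $\|\cdot\|_{2,n}$ are dominated by the unconditional ones under $\|\cdot\|_\infty$, so the same peeling in $\sqrt{\|f\|_{2,n}^2 + \epsilon^2 I(f)^2}$ with the Birman--Solomjak entropy and a Dudley bound for Rademacher averages (with $\sigma_i$'s conditionally independent of the $\xi_i$'s) gives the claim. The passage to Gaussian multipliers uses the standard contraction $\mathrm{E}\sup_\mathcal{T} |\sum \sigma_i t_i| \lesssim \mathrm{E}\sup_\mathcal{T} |\sum g_i t_i|$ (or, equivalently, that Gaussian processes obey the same Dudley entropy integral as Rademacher processes up to universal constants), so the constants $C_\nu$ can be chosen uniformly for both cases.

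The main obstacle is the peeling step: one must combine the shellwise Rademacher bounds into a ratio-type supremum without losing powers of $\log n$ or having the integral diverge near the origin. This is where the choice $\epsilon \geq C_\nu n^{-\nu/(2\nu+1)}$ is essential—it matches the critical radius at which the local entropy and the sample-size term balance, ensuring that the ``lowest'' shell already achieves the desired order and that all higher shells contribute a convergent geometric series. Once this critical radius is identified, everything else is standard chaining.
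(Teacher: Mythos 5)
The paper does not prove this lemma from scratch: it simply cites Corollary 5 of \cite{MVB09} for the first inequality, identifies the correspondence of parameters ($s=\nu$, $\alpha=1-1/(2\nu)$, $\gamma=2/(2\nu+1)$), and remarks that the second inequality follows ``in a similar way.'' You have instead attempted to reconstruct the underlying proof, which is a legitimate and more informative route, and the ingredients you use (the orthogonal decomposition from Lemma \ref{lemA2}, the Birman--Solomjak entropy estimate, a Dudley-type chaining bound, and the observation that $n^{-\nu/(2\nu+1)}$ is the critical radius at which $n^{-1/2}\int_0^{r}\eta^{-1/(2\nu)}\,d\eta \asymp r^2$) are indeed the ones \cite{MVB09} work with (via Chapter 8 of \cite{vdG00}). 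The treatment of the polynomial part via finite dimension and of the smooth part via its $\|\cdot\|_\infty$-entropy is exactly right, as is the orthogonality identity $\|f\|_2^2+\epsilon^2 I(f)^2 = \|f^{[1]}\|_2^2+\|f^{[2]}\|_2^2+\epsilon^2 I(f^{[1]})^2$. One small remark: you do not need condition (C3) to invoke Lemma \ref{lemA2} here, since nonsingularity of $\bm{\Sigma}_{Q,\nu}$ is already an explicit hypothesis of the lemma.

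There is, however, a genuine gap in your peeling step. After dividing by the shell level $2^k\epsilon$, the shell-$k$ Dudley bound $n^{-1/2}\,2^k\,\epsilon^{1-1/(2\nu)}$ normalizes to $n^{-1/2}\epsilon^{-1/(2\nu)}$, which is \emph{independent of} $k$. Thus the contributions across shells do not form a convergent geometric series, and ``summing the geometric series'' as you describe would give infinity, not $O(\epsilon)$. (Your claimed shell-wise bound $2^k n^{-\nu/(2\nu+1)}$ also does not match the Dudley computation, which produces $n^{-1/2} 2^k\epsilon^{1-1/(2\nu)}$; the two agree only at $\epsilon = n^{-\nu/(2\nu+1)}$ after absorbing an extra power.) The clean fix is to notice that the ratio functional is homogeneous of degree zero in $f^{[1]}$, so the supremum over all ``shells'' is identical to the supremum over the single sphere $\{\|f^{[1]}\|_2^2 + \epsilon^2 I(f^{[1]})^2 = 1\}$; on that sphere $I(f^{[1]})\le 1/\epsilon$ and hence $\|f^{[1]}\|_\infty \lesssim 1/\epsilon$ by Lemma \ref{lemA2}, so $\log N(\eta,\cdot,\|\cdot\|_\infty)\lesssim(\epsilon\eta)^{-1/\nu}$, and a single Dudley integral $n^{-1/2}\int_0^1 (\epsilon\eta)^{-1/(2\nu)}\,d\eta \asymp n^{-1/2}\epsilon^{-1/(2\nu)}$ (finite because $1/(2\nu)<1$) suffices. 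This is $\le \epsilon$ precisely when $\epsilon\ge n^{-\nu/(2\nu+1)}$. If you prefer a peeling argument, you should peel a \emph{tail probability} of the ratio supremum (in which case each shell's contribution decays geometrically via the deviation inequality), not the expectation; then integrate the tail.
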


\begin{proof}
The first inequality is Corollary 5 of \cite{MVB09}. Note that their $s$, $\alpha$ and $\gamma$ correspond to $s = \nu, \alpha=1-1/(2\nu)$ and $\gamma=2/(2\nu+1)$ in our notation. 
The second inequality can be shown in a similar way. 
\end{proof}

\begin{add}
\label{add1}
It is clear that, under the same conditions of Lemma \ref{lemA2}, for any constant $K > 0$, 
\begin{equation*}
\mathrm{E}_{\sigma} \left [ \sup_{\substack{f \in W_{2}^{\nu}([0,1]) \\ I(f) \leq 1, \| f \|_{\infty} \leq K}} \frac{| n^{-1} \sum_{i=1}^{n} \sigma_{i} f(\xi_{i}) |}{\sqrt{ \| f \|_{2,n}^{2} + \epsilon^{2} I(f)^{2}}} \right ] \leq C_{\nu} \epsilon.
\end{equation*}
\end{add}

The next two lemmas compare the empirical and population $L_{2}$-norms on the class $\mathcal{G}$ uniformly over the distributions of $z_{1j} \ (j \in T)$.

\begin{lemma}
\label{lemA4}
Assume conditions (C1), (C3) and (C4). Let $T$ be any subset of $\{ 1,\dots, d \}$ and $s := | T |$.  Let $C_{\nu}$ be the constant given in Lemma \ref{lemA1}. 
Then, there exists a positive constant $C_{q,\nu}$ depending only on $c_{q}$ (which is given in condition (C3)) and $\nu$ such that, as long as 
\begin{equation*}
C_{\nu} n^{-\nu/(2\nu+1)} \leq \epsilon \leq 1 \ \text{and} \ C_{q,\nu} \epsilon^{-1/(2\nu)} \max \{ \epsilon, \sqrt{\log (s \vee n)/n} \} \leq 0.5,
\end{equation*}
with probability at least $1-(s \vee n)^{-1}$, 
\begin{equation*}
\| g_{j} \|_{2,n}^{2} \leq 1.5\| g_{j} \|_{2}^{2} + 0.5\epsilon^{2} I(g_{j})^{2}, \ \forall g_{j} \in \mathcal{G}, \ \forall j \in T.
\end{equation*}
\end{lemma}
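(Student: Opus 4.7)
The goal is to establish, with probability at least $1 - (s \vee n)^{-1}$,
\begin{equation*}
\sup_{j \in T}\, \sup_{g_j \in \mathcal{G}}\, \frac{\bigl| \|g_j\|_{2,n}^2 - \|g_j\|_2^2 \bigr|}{\|g_j\|_2^2 + \epsilon^2 I(g_j)^2} \;\leq\; C_{q,\nu}\, \epsilon^{-1/(2\nu)}\, \max\bigl\{ \epsilon,\, \sqrt{\log(s \vee n)/n} \bigr\},
\end{equation*}
for the claimed constant $C_{q,\nu}$. Once this is in hand, the hypothesis of the lemma forces the right-hand side to be at most $1/2$, and this rearranges to $\|g_j\|_{2,n}^2 \leq 1.5\,\|g_j\|_2^2 + 0.5\,\epsilon^2 I(g_j)^2$ as required. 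My plan is: prove the displayed bound for a single $j$ via a symmetrization--contraction--Rademacher argument based on Lemmas \ref{lemA1} and \ref{lemA3}; upgrade to high probability with Talagrand's inequality; and close with a union bound over $j \in T$.

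The target ratio is invariant under $g_j \mapsto c g_j$, so it suffices to control $Z_j(g) := \|g\|_{2,n}^2 - \|g\|_2^2$ uniformly over the normalized class $\mathcal{H} := \{g \in \mathcal{G}: \|g\|_2^2 + \epsilon^2 I(g)^2 \leq 1\}$. Any $g \in \mathcal{H}$ satisfies $\|g\|_2 \leq 1$ and $I(g) \leq 1/\epsilon$, so the Gabushin inequality (Lemma \ref{lemA1}) yields $\|g\|_\infty \leq K \|g\|_2^{(2\nu-1)/(2\nu)} I(g)^{1/(2\nu)} \leq K \epsilon^{-1/(2\nu)}$. This uniform $L_\infty$ envelope is the source of the $\epsilon^{-1/(2\nu)}$ factor in the final bound.

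Next I would symmetrize $\sup_{g \in \mathcal{H}} |Z_j(g)|$ by the standard argument (e.g.\ \cite{vdG00}, Lemma 8.2) and apply the Ledoux--Talagrand contraction principle to the map $t \mapsto t^2$, which is $2K\epsilon^{-1/(2\nu)}$-Lipschitz on the interval $[-K\epsilon^{-1/(2\nu)}, K\epsilon^{-1/(2\nu)}]$ and vanishes at $0$, giving
\begin{equation*}
\mathrm{E} \sup_{g \in \mathcal{H}} | Z_j (g) | \;\leq\; 8 K \epsilon^{-1/(2\nu)}\, \mathrm{E} \sup_{g \in \mathcal{H}} \biggl| \frac{1}{n}\sum_{i=1}^n \sigma_i\, g(z_{ij}) \biggr|.
\end{equation*}
Condition (C3)(ii) ($q_j \geq c_q$ on $[0,1]$) ensures that $\bm{\Sigma}_{Q_j,\nu}$ is non-singular uniformly in $j \in T$, so Lemma \ref{lemA3} (first inequality) bounds the right-hand factor by $C_\nu \epsilon \sup_{g \in \mathcal{H}} \sqrt{\|g\|_2^2 + \epsilon^2 I(g)^2} \leq C_\nu \epsilon$. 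Hence $\mathrm{E} \sup_{g \in \mathcal{H}} |Z_j(g)| \lesssim \epsilon^{\,1 - 1/(2\nu)}$ with a constant depending only on $c_q$ and $\nu$.

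To upgrade from expectation to high probability uniformly in $j$, I would apply Talagrand's empirical-process concentration inequality (see \cite{M00}) to $\sup_{g \in \mathcal{H}}|Z_j(g)|$. The class $\{g^2 - \mathrm{E}[g(z_{1j})^2] : g \in \mathcal{H}\}$ has envelope $\lesssim \epsilon^{-1/\nu}$ and weak variance $\lesssim \|g\|_\infty^2 \|g\|_2^2 \lesssim \epsilon^{-1/\nu}$, so Talagrand's inequality gives a tail of the form $\exp(-c\, n t^2 \epsilon^{1/\nu})$ for moderate $t$. Choosing $t \asymp \epsilon^{-1/(2\nu)} \sqrt{\log(s \vee n)/n}$ makes each tail $\leq (s \vee n)^{-2}$; union-bounding over the $s \leq s\vee n$ values of $j \in T$ controls the total failure probability by $(s \vee n)^{-1}$. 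Combining the mean bound with this fluctuation term produces the displayed inequality.

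The main obstacle is the contraction step: $t \mapsto t^2$ is not globally Lipschitz, so contraction cannot be applied directly to the raw Sobolev class $\mathcal{G}$. The remedy is to first pass to the normalized class $\mathcal{H}$ and extract a uniform $L_\infty$ envelope from Lemma \ref{lemA1}, which is precisely where the factor $\epsilon^{-1/(2\nu)}$ enters and shapes the final rate. A secondary subtlety is that the Talagrand envelope \emph{and} the weak variance must both be expressed in terms of this same $\epsilon^{-1/(2\nu)}$ quantity---using only the envelope would yield a weaker bound. Uniformity in $j \in T$ then follows once the constants in Lemmas \ref{lemA1} and \ref{lemA3} are verified to depend only on $c_q$ and $\nu$ and not on the particular law $Q_j$.
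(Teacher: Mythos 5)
Your argument is correct and follows essentially the same route as the paper's own proof: use Gabushin's inequality (Lemma~\ref{lemA1}) to extract the $\epsilon^{-1/(2\nu)}$ envelope, symmetrize and contract onto the linear Rademacher process controlled by Lemma~\ref{lemA3}, apply Talagrand/Massart concentration with weak variance and envelope both $\lesssim \epsilon^{-1/\nu}$, set $t \asymp \log(s\vee n)$, and union-bound over $j \in T$. The only cosmetic difference is that you restrict to the normalized class $\mathcal{H}$ and make the Lipschitz constant in the contraction step explicit, whereas the paper keeps the ratio form throughout; the content is identical.
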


\begin{proof}
In this proof, $C_{q,\nu}$ denotes some positive constant depending only on $c_{q}$ and $\nu$. Its value may change from line to line.
Pick any $j \in T$.
For a constant $\epsilon \in (0,1]$ specified later, define 
\begin{equation*}
Z_{j} := \sup_{g_{j} \in \mathcal{G}} \frac{\| g_{j} \|_{2,n}^{2} - \| g_{j} \|^{2}_{2}}{(\| g_{j} \|_{2}^{2} + \epsilon^{2} I(g_{j})^{2})}.
\end{equation*}
By Lemma \ref{lemA1}, invoke that when $I(g_{j}) \neq 0$, 
\begin{align*}
\| g_{j} \|_{\infty} &\leq C_{q,\nu} \| g_{j} \|_{2}^{(2\nu-1)/(2\nu)} I(g_{j})^{1/(2\nu)} \\
&\leq C_{q,\nu} \| g_{j} \|_{2}^{(2\nu-1)/(2\nu)} (\epsilon I(g_{j}))^{1/(2\nu)} \epsilon^{-1/(2\nu)} \\
&\leq C_{q,\nu} \epsilon^{-1/(2\nu)} \sqrt{ \| g_{j} \|_{2}^{2} + \epsilon^{2} I(g_{j})^{2}},
\end{align*}
and 
\begin{align*}
\mathrm{E}[ g_{j}(z_{1j})^{4}] &\leq \| g_{j} \|_{\infty}^{2} \| g_{j} \|_{2}^{2} \\
&\leq C_{q,\nu} \epsilon^{-1/\nu} (\| g_{j} \|_{2}^{2} + \epsilon^{2} I(g_{j})^{2})^{2}.
\end{align*}
Even if $I(g_{j}) = 0$, these inequalities hold (with a suitable change to the constant $C_{q,\nu}$ if necessary) since $\epsilon \leq 1$.  
Thus, by Massart's (2000) form of Talagrand's (1996)  inequality, for all $t > 0$, with probability at least $1-e^{-t}$, 
\begin{equation*}
Z_{j} \leq 2\mathrm{E}[Z_{j}] + C_{q,\nu} \sqrt{\epsilon^{-1/\nu}t/n} + C_{q,\nu} \epsilon^{-1/\nu} t/n.
\end{equation*}
Applying Lemma \ref{lemA3} with the symmetrization inequality \citep[][Lemma 2.3.1]{VW96} and the contraction principle \citep[][Theorem 4.12]{LT91}, for all $\epsilon \geq C_{\nu} n^{-\nu/(2\nu+1)}$, we have $\mathrm{E}[ Z_{j} ] \leq C_{q,\nu} \epsilon^{(2\nu-1)/(2\nu)}$.
Therefore, taking $t=2 \log (s \vee n)$, we have, 
with probability at least $1-(s \vee n)^{-2}$, 
\begin{equation*}
Z_{j} \leq  C_{q,\nu} \max \{  \epsilon^{(2\nu -1)/(2\nu)}, \epsilon^{-1/(2\nu)} \sqrt{\log (s \vee n)/n}, \epsilon^{-1/\nu} \log (s \vee n)/n \}. 
\end{equation*}

By the union bound, the above inequality simultaneously holds for all $j \in T$ with probability at least $1-(s \vee n)^{-1}$. 
The desired conclusion now follows from the additional restriction that $C_{q,\nu} \epsilon^{-1/(2\nu)} \max \{ \epsilon,  \sqrt{\log (s \vee n)/n} \} \leq 0.5$. 
\end{proof}

\begin{lemma}
\label{lemA5}
Let $\mathcal{P}_{\nu-1}$ denote the set of all polynomial functions on $[0,1]$ of degree $\nu-1$. Assume conditions (C1), (C3) and (C5). Then, with probability approaching one, 
$\| h_{j} \|^{2}_{2,n} \leq 1.5 \| h_{j} \|^{2}_{2}$ for all $h_{j} \in \mathcal{P}_{\nu-1}$ and $1 \leq j \leq d$. 
\end{lemma}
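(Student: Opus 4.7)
\noindent My plan is to exploit the fact that $\mathcal{P}_{\nu-1}$ is a fixed, finite-dimensional vector space of dimension $\nu$, so the comparison of empirical and population $L_{2}$ norms reduces to a comparison of $\nu\times\nu$ moment matrices. Writing $h_{j}(z)=\sum_{k=0}^{\nu-1} c_{k} z^{k}=\bm{c}'\bm{v}(z)$ with $\bm{v}(z):=(1,z,\dots,z^{\nu-1})'$, I have
\begin{equation*}
\|h_{j}\|_{2,n}^{2}=\bm{c}'\hat{\bm{M}}_{j}\bm{c},\qquad \|h_{j}\|_{2}^{2}=\bm{c}'\bm{M}_{j}\bm{c},
\end{equation*}
where $\hat{\bm{M}}_{j}:=n^{-1}\sum_{i=1}^{n}\bm{v}(z_{ij})\bm{v}(z_{ij})'$ and $\bm{M}_{j}:=\mathrm{E}[\bm{v}(z_{1j})\bm{v}(z_{1j})']$. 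It therefore suffices to show that $\hat{\bm{M}}_{j}\preceq 1.5\,\bm{M}_{j}$ for every $1\leq j\leq d$ with probability tending to one.

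\noindent The first step is to show that $\bm{M}_{j}$ has smallest eigenvalue bounded below uniformly in $j$. For any unit vector $\bm{c}\in\mathbb{R}^{\nu}$, condition (C3) gives
\begin{equation*}
\bm{c}'\bm{M}_{j}\bm{c}=\int_{0}^{1}(\bm{c}'\bm{v}(z))^{2} q_{j}(z)\,dz \geq c_{q}\int_{0}^{1}(\bm{c}'\bm{v}(z))^{2}\,dz \geq c_{q}\, c_{\nu},
\end{equation*}
where $c_{\nu}>0$ is the smallest eigenvalue of the (deterministic, fixed-dimension) Gram matrix $\int_{0}^{1}\bm{v}(z)\bm{v}(z)'\,dz$. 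So the smallest eigenvalue of $\bm{M}_{j}$ is at least $c_{q}c_{\nu}>0$ uniformly over $j$.

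\noindent The second step is to bound $\|\hat{\bm{M}}_{j}-\bm{M}_{j}\|$ uniformly in $j$. Each of the $\nu^{2}$ entries is an average of i.i.d.\ centered random variables of the form $z_{ij}^{k+\ell}-\mathrm{E}[z_{1j}^{k+\ell}]$, which are bounded in $[-1,1]$ because $z_{ij}\in[0,1]$. Hoeffding's inequality plus a union bound over the $\nu^{2}d$ entries yield that
\begin{equation*}
\max_{1\leq j\leq d}\|\hat{\bm{M}}_{j}-\bm{M}_{j}\|\lesssim \sqrt{\log d/n}
\end{equation*}
with probability approaching one, since $\nu$ is fixed so the operator norm is controlled by a constant times the max-entry norm. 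Condition (C5) gives $\log d/n\leq \log d/n^{2\nu/(2\nu+1)}\to 0$, so the right-hand side is $o(1)$.

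\noindent Combining the two steps, for any $\bm{c}\in\mathbb{R}^{\nu}$,
\begin{equation*}
\bm{c}'\hat{\bm{M}}_{j}\bm{c}\leq \bm{c}'\bm{M}_{j}\bm{c}+\|\hat{\bm{M}}_{j}-\bm{M}_{j}\|\cdot \|\bm{c}\|_{E}^{2}\leq \bigl(1+(c_{q}c_{\nu})^{-1}\|\hat{\bm{M}}_{j}-\bm{M}_{j}\|\bigr)\bm{c}'\bm{M}_{j}\bm{c},
\end{equation*}
which is at most $1.5\,\bm{c}'\bm{M}_{j}\bm{c}$ uniformly in $j$ for $n$ large enough on the high-probability event from the second step. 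This gives the claim. There is no serious obstacle here: the whole argument works because $\nu$ is fixed (dimension of $\mathcal{P}_{\nu-1}$ does not grow with $n$), Hoeffding's inequality is enough to handle the uniformity in $j$, and the lower bound on the density in (C3) prevents $\bm{M}_{j}$ from degenerating as $j$ varies.
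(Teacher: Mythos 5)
Your proof is correct, and it takes a genuinely different route from the paper's. The paper's proof treats $\mathcal{P}_{\nu-1}$ as just another function class and reuses the same empirical-process machinery it employs for the Sobolev class in Lemma A.4: normalize so that $\|h_j\|_2=1$, bound $\|h_j\|_\infty$ via Gabushin's inequality (Lemma A.1) and the density lower bound in (C3), apply Massart's form of Talagrand's concentration inequality to $Z_j=\sup_{\|h_j\|_2=1}|\|h_j\|_{2,n}^2-1|$, estimate $\mathrm{E}[Z_j]\lesssim n^{-1/2}$ through symmetrization/contraction and an entropy-integral argument (referencing \cite{MVB09}, p.~3813), then take a union bound over $j$ with $t=2\log d$. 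You instead exploit the key structural fact that $\mathcal{P}_{\nu-1}$ is a fixed $\nu$-dimensional space: the comparison of norms reduces to a comparison of $\nu\times\nu$ moment matrices, the population matrix $\bm{M}_j$ is uniformly well-conditioned because the Hilbert-type Gram matrix $\int_0^1 \bm{v}\bm{v}'$ is positive definite and $q_j\geq c_q$, and $\hat{\bm{M}}_j-\bm{M}_j$ is handled entrywise by Hoeffding's inequality plus a union bound over the $\nu^2 d$ entries (legitimate since $z_{ij}^{k+\ell}\in[0,1]$, and $\nu$ fixed lets you pass from max-entry norm to operator norm). Your argument is more elementary and self-contained, avoiding Talagrand/Massart, symmetrization, the contraction principle, and any entropy calculation; the paper's approach is less parsimonious for this specific lemma but keeps the proof technique uniform with the rest of Appendix A, where the function classes are genuinely infinite-dimensional and such shortcuts are unavailable. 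Both yield the same $\sqrt{\log d/n}=o(1)$ rate and the same use of (C5) to conclude.
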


\begin{proof}
As in the previous proof, $C_{q,\nu}$ denotes some positive constant depending only on $c_{q}$ and $\nu$. Its value may change from line to line.
By normalization, it suffices to show that 
$\max_{1 \leq j \leq d} \sup_{h_{j} \in \mathcal{H}_{j}}  | \| h_{j} \|_{2,n}^{2} - 1 | \stackrel{p}{\to} 0$, where $\mathcal{H}_{j} := \{ h_{j} : h_{j} \in \mathcal{P}_{\nu-1}, \| h_{j} \|_{2}  = 1 \}$. Pick any $1 \leq j \leq d$.
By condition (C3) (ii) and Lemma \ref{lemA1}, $\| h_{j} \|_{\infty} \leq C_{q,\nu} \| h_{j} \|_{2} = C_{q,\nu}$ for all $h_{j} \in \mathcal{H}_{j}$, 
and $\mathrm{E}[ h_{j}(z_{1j})^{4} ] \leq \|  h_{j} \|_{\infty}^{2} \| h_{j} \|_{2}^{2} \leq C_{q,\nu}$. Therefore, by Massart's form of Talagrand's inequality, 
for all $t > 0$, with probability at least $1-e^{-t}$,
\begin{equation*}
Z_{j} \leq 2\mathrm{E}[ Z_{j} ] + C_{q,\nu} \sqrt{t/n} + C_{q,\nu} t/n,
\end{equation*}
where $Z_{j} := \sup_{h_{j} \in \mathcal{H}_{j}}  | \| h_{j} \|_{2,n}^{2} - 1 |$. We wish to evaluate $\mathrm{E}[Z_{j}]$. Let $\sigma_{1},\dots,\sigma_{n}$ be independent Rademacher random variables independent of $\bm{z}_{1},\dots,\bm{z}_{n}$. By the symmetrization inequality and the contraction principle, 
\begin{equation*}
\mathrm{E}[ Z_{j} ] \leq C_{q,\nu} \mathrm{E}\left[ \sup_{h_{j} \in \mathcal{H}_{j}} \left | \frac{1}{n} \sum_{i=1}^{n} \sigma_{i} h_{j}(z_{ij}) \right | \right].
\end{equation*}
Arguing as in \citet[][p.3813]{MVB09} (or using a standard entropy integral argument), it is shown that 
\begin{equation*}
\mathrm{E}\left[ \sup_{h_{j} \in \mathcal{H}_{j}} \left | \frac{1}{n} \sum_{i=1}^{n} \sigma_{i} h_{j}(z_{ij}) \right | \right] \leq \frac{C_{q,\nu}}{\sqrt{n}}.
\end{equation*}
Taking $t=2\log d$, we have, with probability at least $1-d^{-2}$, 
\begin{equation*}
Z_{j} \leq C_{q,\nu} \sqrt{\frac{\log d}{n}}. 
\end{equation*}  
(Recall that $\log d/n \to 0$.) 
 
By the union bound, the above inequality simultaneously holds for all $1 \leq j \leq d$ with probability at least $1-d^{-1}$. 
Recalling that $d \to \infty$, we obtain the desired conclusion. 
\end{proof}

\subsection{Proof of Lemma 6.1}

{\bf Parts (i) and (ii)}: 
We first point out that (ii) follows from (i). Suppose that (i) is true for the case that $u_{i}$ are independent Rademacher random variables independent of $\bm{z}_{1},\dots,\bm{z}_{n}$. 
Let $\sigma_{1},\dots,\sigma_{i}$ denote independent Rademacher random variables independent of $\bm{z}_{1},\dots,\bm{z}_{n}$. By the symmetrization inequality for probabilities \citep[][Lemma 2.13]{VW96}, for all $t \geq \sqrt{8/n}$, 
{\footnotesize
\begin{equation*}
\mathrm{P} \left \{ \sup_{g_{j} \in \mathcal{G}} \frac{|n^{-1} \sum_{i=1}^{n} \{ g_{j}(z_{ij}) - \mu_{g_{j}} \}|}{\sqrt{\| g_{j} \|_{2}^{2} + \epsilon^{2} I(g_{j})^{2}}} > t \right \} \leq 4\mathrm{P} \left \{ \sup_{g_{j} \in \mathcal{G}} \frac{|n^{-1} \sum_{i=1}^{n} \sigma_{i} g_{j}(z_{ij}) |}{\sqrt{\| g_{j} \|_{2}^{2} + \epsilon^{2} I(g_{j})^{2}}} > \frac{t}{4} \right \}.
\end{equation*}
}
Thus, by (i), the right side goes to zero with $t=4 \max \{ \epsilon, C_{1} \sqrt{\log (s \vee n)/n} \}$.

In what follows, we wish to show (i). 
Take $C_{\nu}$ as in Lemma \ref{lemA3} and let $\epsilon = \epsilon_{n} \to 0$ be any sequence such that $\epsilon \geq C_{\nu} n^{-\nu/(2\nu+1)}$. 
Define the events  
\begin{align*}
\Omega_{7} &:= \{ \| g_{j} \|_{2,n}^{2} \leq 1.5 \| g_{j} \|_{2}^{2} + 0.5 \epsilon^{2} I(g_{j})^{2}, \ \forall g_{j} \in \mathcal{G}, \ \forall j \in T \}, \\
\Omega_{8} &:= \{ \| h_{j} \|_{2,n}^{2} \leq 1.5 \| h_{j} \|_{2}^{2}, \ \forall h_{j} \in \mathcal{P}_{\nu-1}, \ \forall j \in T \},  
\end{align*}
where $\mathcal{P}_{\nu-1}$ denotes the set of all polynomial functions on $[0,1]$ of degree $\nu-1$. 

We first consider case (a) in condition (C2). By normalization, 
it suffices to consider the case that $| u_{1} | \leq 1$ almost surely. 
Pick any $j \in T$.
Consider the function
\begin{align*}
F_{j}(\bm{u},\bm{z}_{\cdot j}) &:= \sup_{g_{j} \in \mathcal{G}} \frac{| n^{-1}\sum_{i=1}^{n} u_{i} g_{j}(z_{ij}) |}{\sqrt{\| g_{j} \|_{2}^{2} + \epsilon^{2} I(g_{j})^{2}}},  \\
&= \sup_{g_{j} \in \mathcal{G}} \frac{ n^{-1}\sum_{i=1}^{n} u_{i} g_{j}(z_{ij}) }{\sqrt{\| g_{j} \|_{2}^{2} + \epsilon^{2} I(g_{j})^{2}}}, \ \bm{u} = (u_{1},\dots,u_{n})'\ \bm{z}_{\cdot j} = (z_{1j},\dots,z_{nj})',
\end{align*}
where the second inequality is due to the fact that $\mathcal{G}$ is symmetric, i.e., if $g_{j} \in \mathcal{G}$ then $-g_{j} \in \mathcal{G}$. 
Given $\bm{z}_{1},\dots,\bm{z}_{n}$, the map $\bm{u} \mapsto F_{j}(\bm{u},\bm{z}_{\cdot j})$ is Lipschitz continuous with Lipschitz constant bounded by 
\begin{equation*}
\sup_{g_{j} \in \mathcal{G}} \frac{n^{-1/2} \| g_{j} \|_{2,n}}{\sqrt{\| g_{j} \|_{2}^{2} + \epsilon^{2} I(g_{j})^{2}}},
\end{equation*}
which is, on the event $\Omega_{7}$, bounded by $Cn^{-1/2}$. Therefore, by Corollary 4.8 of \cite{L01}, on the event $\Omega_{7}$, 
\begin{equation*}
\mathrm{P}_{u} \{ F_{j}(\bm{u},\bm{z}_{\cdot j}) \geq \mathrm{E}_{u}[F_{j}(\bm{u},\bm{z}_{\cdot j})] +  Ctn^{-1/2}  \} \leq c_{1} e^{-c_{2}t^{2}},
\end{equation*}
where $\mathrm{P}_{u}$ ($\mathrm{E}_{u}$) denotes the conditional probability (expectation, respectively) with respect to $u_{1},\dots,u_{n}$ given $\bm{z}_{1},\dots,\bm{z}_{n}$, and $c_{1} > 0$ and $c_{2} > 0$ are universal constants.

By Lemma \ref{lemA2}, invoke that $g_{j} \in \mathcal{G}$ can be written as $g_{j} = g_{j}^{[1]} + g_{j}^{[2]}$ 
such that (i) $\| g_{j}^{[1]} \|_{\infty} \leq \const \times I(g^{[1]}_{j})$ (the constant is independent of $g_{j}$); 
(ii) $g_{j}^{[2]} \in \mathcal{P}_{\nu-1}$;  and (iii) $\mathrm{E}[ g_{j}^{[1]}(z_{1j}) g_{j}^{[2]}(z_{1j}) ] = 0$.
Observe that on the event $\Omega_{7} \cap \Omega_{8}$, 
{\footnotesize
\begin{align*}
\frac{| n^{-1}\sum_{i=1}^{n} u_{i} g_{j}(z_{ij}) |}{\sqrt{\| g_{j} \|_{2}^{2} + \epsilon^{2} I(g_{j})^{2}}} &\leq 
\frac{| n^{-1}\sum_{i=1}^{n} u_{i} g_{j}^{[1]}(z_{ij}) |}{\sqrt{\| g^{[1]}_{j} \|_{2}^{2} + \epsilon^{2} I(g^{[1]}_{j})^{2}}} + \frac{| n^{-1}\sum_{i=1}^{n} u_{i} g^{[2]}_{j}(z_{ij}) |}{\| g^{[2]}_{j} \|_{2}} \\
&\leq C \left \{ \frac{| n^{-1}\sum_{i=1}^{n} u_{i} g_{j}^{[1]}(z_{ij}) |}{\sqrt{\| g^{[1]}_{j} \|_{2,n}^{2} + \epsilon^{2} I(g^{[1]}_{j})^{2}}} + \frac{| n^{-1}\sum_{i=1}^{n} u_{i} g^{[2]}_{j}(z_{ij}) |}{\| g^{[2]}_{j} \|_{2,n}} \right \}.
\end{align*}
}
By Lemma \ref{lemA3} (see also Addendum \ref{add1}) and the fact that $u_{i}$'s can be replaced by independent Rademacher variables by the contraction principle, the conditional expectation (given $\bm{z}_{1},\dots,\bm{z}_{n}$) of the first term inside the brace is bounded by $C_{\nu} \epsilon$. 
On the other hand, by \citet[][p.3813]{MVB09}, the conditional expectation of the second term inside the brace is bounded by a constant times $n^{-1/2}$ where the constant depends only on $\nu$. So there exists a constant $C_{\nu}'$ depending only on $\nu$ such that  $\mathrm{E}_{u}[F_{j}(\bm{u},\bm{z}_{\cdot j})]  \leq C'_{\nu} \epsilon$ on the event $\Omega_{7} \cap \Omega_{8}$. Therefore, 
on the event $\Omega_{7} \cap \Omega_{8}$, 
\begin{equation*}
\mathrm{P}_{u} \{ F_{j}(\bm{u},\bm{z}_{\cdot j}) \geq C'_{\nu} \epsilon + C \sqrt{\log (s \vee n)/n}  \} \leq c_{1}(s \vee n)^{-2},
\end{equation*}
where we have taken $t = \sqrt{2c_{2}^{-1}\log (s \vee n)}$.

We now move $j$. By the union bound, 
\begin{align*}
&\mathrm{P} \{ \max_{j \in T} F_{j}(\bm{u},\bm{z}_{\cdot j}) \geq C'_{\nu} \epsilon + C \sqrt{\log (s \vee n)/n} \} \\
&\leq \mathrm{P} \left [ \{ \max_{j \in T} F_{j}(\bm{u},\bm{z}_{\cdot j}) \geq \epsilon \} \cap \Omega_{7} \cap \Omega_{8} \right ] + \mathrm{P}(\Omega_{7}^{c}) + \mathrm{P}(\Omega_{8}^{c}) \\
&\leq c_{1}(s \vee n)^{-1} + \mathrm{P}(\Omega_{7}^{c}) + \mathrm{P}(\Omega_{8}^{c}).
\end{align*}
By Lemmas \ref{lemA3} and \ref{lemA4}, $\mathrm{P}(\Omega_{7}^{c}) + \mathrm{P}(\Omega_{8}^{c}) \to 0$. Therefore, we obtain (i) for the case that $| u_{1} | \leq 1$ almost surely. 

We next consider case (b) in condition (C2). Recall that $u_{1} | \bm{z}_{1} \sim N(0, \sigma_{u}(\bm{z}_{1})^{2})$ and $\sigma_{u}(\bm{z}_{1}) \leq \sigma_{u}$ almost surely. Put $\tilde{u}_{i} := u_{i}/\sigma_{u}(\bm{z}_{i})$. Then, $\tilde{u}_{1},\dots,\tilde{u}_{n}$ are independent standard normal random variables independent of $\bm{z}_{1},\dots,\bm{z}_{n}$. Consider now the function
\begin{equation*}
F_{j}(\tilde{\bm{u}},\bm{z}_{1}^{n}) := \sup_{g_{j} \in \mathcal{G}} \frac{| n^{-1}\sum_{i=1}^{n} \tilde{u}_{i} \sigma_{u}(\bm{z}_{i})g_{j}(z_{ij}) |}{\sqrt{\| g_{j} \|_{2}^{2} + \epsilon^{2} I(g_{j})^{2}}}, \ \tilde{\bm{u}} = (\tilde{u}_{1},\dots,\tilde{u}_{n})', \ \bm{z}_{1}^{n} = \{ \bm{z}_{1},\dots,\bm{z}_{n} \}.
\end{equation*}
Given $\bm{z}_{1},\dots,\bm{z}_{n}$, the map $\tilde{\bm{u}} \mapsto F_{j}(\tilde{\bm{u}},\bm{z}_{1}^{n})$ is Lipschitz continuous with Lipschitz constant bounded by 
\begin{equation*}
\sigma_{u} \sup_{g_{j} \in \mathcal{G}} \frac{n^{-1/2} \|  g_{j} \|_{2,n}}{\sqrt{\| g_{j} \|_{2}^{2} + \epsilon^{2} I(g_{j})^{2}}},
\end{equation*}
which is, on the event $\Omega_{7}$, bounded by $C\sigma_{u} n^{-1/2}$. Therefore, by Theorem 7.1 of \cite{L01}, on the event $\Omega_{7}$, 
\begin{equation*}
\mathrm{P}_{\tilde{u}} \{ F_{j}(\tilde{\bm{u}},\bm{z}_{1}^{n}) \geq \mathrm{E}_{\tilde{u}}[F_{j}(\tilde{\bm{u}},\bm{z}_{1}^{n})] +  C\sigma_{u} tn^{-1/2}  \} \leq  e^{-t^{2}/2}.
\end{equation*}
By the contraction principle for Gaussian processes \citep[][Corollary 3.17]{LT91},
\begin{equation*}
\mathrm{E}_{\tilde{u}}[F_{j}(\tilde{\bm{u}},\bm{z}_{1}^{n})]  
\leq C \sigma_{u}\mathrm{E}_{\tilde{u}} \left [ \sup_{g_{j} \in \mathcal{G}} \frac{| n^{-1}\sum_{i=1}^{n} \tilde{u}_{i} g_{j}(z_{ij}) |}{\sqrt{\| g_{j} \|_{2}^{2} + \epsilon^{2} I(g_{j})^{2}}} \right ].
\end{equation*}
The rest of the procedure is the same as the previous one. 

{\bf Part (iii)}: The result basically follows from the same argument as the proof of \citet[][Theorem 6]{MVB09}.
For the sake of completeness, we provide an outline of the proof. In what follows, $C_{q,\nu}$ denotes some constant depending only on $c_{q}$ and $\nu$. Its value may change from line to line. 

Let $\sigma_{1},\dots,\sigma_{n}$ denote independent Rademacher random variables independent of $\bm{z}_{1},\dots,\bm{z}_{n}$.  
Define 
\begin{align*}
&Z := \sup_{g = \sum_{j=1}^{d} g_{j}, g_{j} \in \mathcal{G}} \frac{| \| g \|^{2}_{2,n} - \| g \|^{2}_{2} |}{ [\sum_{j=1}^{d} \sqrt{ \| g_{j} \|_{2}^{2} + \epsilon^{2} I(g_{j})^{2} }]^{2}}, \\
&\tilde{Z} := \sup_{g = \sum_{j=1}^{d} g_{j}, g_{j} \in \mathcal{G}} \frac{| n^{-1} \sum_{i=1}^{n} \sigma_{i} g(\bm{z}_{i}) |}{ \sum_{j=1}^{d} \sqrt{\| g_{j} \|_{2}^{2} + \epsilon^{2} I(g_{j})^{2} }}.
\end{align*}
By Lemma \ref{lemA1}, for $g = \sum_{j=1}^{d} g_{j}, g_{j} \in \mathcal{G}$,  
\begin{equation*}
\| g \|_{\infty} \leq \sum_{j=1}^{d} \| g_{j} \|_{\infty} \leq C_{q,\nu} e^{-1/(2\nu)} \sum_{j=1}^{d} \sqrt{\| g_{j} \|_{2}^{2} + \epsilon^{2} I(g_{j})^{2}},
\end{equation*}
and 
\begin{equation*}
\mathrm{E}[ g(\bm{z}_{1})^{4}] \leq \| g \|_{\infty}^{2} \| g \|_{2}^{2} \leq C_{q,\nu} e^{-1/\nu} \left [ \sum_{j=1}^{d} \sqrt{\| g_{j} \|_{2}^{2} + \epsilon^{2} I(g_{j})^{2}} \right ]^{4},
\end{equation*}
where we have use the inequality that $\| g \|_{2} \leq \sum_{j=1}^{d} \| g_{j} \|_{2} \leq \sum_{j=1}^{d} \sqrt{ \| g_{j} \|_{2}^{2} + \epsilon^{2} I(g_{j})^{2}}$. 
Thus, by Massart's (2000) form of Talagrand's (1996) inequality, for all $t > 0$, with probability at least $1-e^{-t}$, 
\begin{equation*}
Z \leq 2 \mathrm{E}[ Z ] + C_{q,\nu} \sqrt{\epsilon^{-1/\nu}t/n} + C_{q,\nu} \epsilon^{-1/\nu} t/n.
\end{equation*}
We wish to evaluate $\mathrm{E}[ Z ]$.
Using the symmetrization inequality and the contraction principle, we have
\begin{equation*}
\mathrm{E}[ Z ] \leq C_{q,\nu} \epsilon^{-1/(2\nu)} \mathrm{E}[\tilde{Z}].
\end{equation*}
By a standard calculation, 
\begin{equation*}
\mathrm{E}[ \tilde{Z} ] \leq \mathrm{E}\left[ \max_{1 \leq j \leq d} \sup_{g_{j} \in \mathcal{G}} \frac{|n^{-1}\sum_{i=1}^{n}\sigma_{i}g_{j}(z_{ij})|}{\sqrt{\| g_{j} \|_{2}^{2} + \epsilon^{2} I(g_{j})^{2}}} \right ].
\end{equation*}
Recall that $\delta = \max \{ n^{-\nu/(2\nu+1)}, \sqrt{\log d/n} \}$. By lemma 13 of \cite{MVB09} and Lemma \ref{lemA3}, we have
{\footnotesize
\begin{align*}
&\mathrm{E}\left[ \max_{1 \leq j \leq d} \sup_{g_{j} \in \mathcal{G}} \frac{|n^{-1}\sum_{i=1}^{n}\sigma_{i}g_{j}(z_{ij})|}{\sqrt{\| g_{j} \|_{2}^{2} + \epsilon^{2} I(g_{j})^{2}}} \right ] \\
&\leq 
4 \max_{1 \leq j \leq d} \mathrm{E}\left[ \sup_{g_{j} \in \mathcal{G}} \frac{|n^{-1}\sum_{i=1}^{n}\sigma_{i}g_{j}(z_{ij})|}{\sqrt{\| g_{j} \|_{2}^{2} + \epsilon^{2} I(g_{j})^{2}}} \right ] + \frac{2C_{q,\nu} \epsilon^{-1/(2\nu)} (1+\log d)}{3n} + \sqrt{\frac{4(1+\log d)}{n}} \\
&\leq C_{q,\nu}\max \left \{ \epsilon,  \frac{\epsilon^{-1/(2\nu)}\log d}{n}, \sqrt{\frac{\log d}{n}} \right \} \\
&\leq C_{q,\nu} \max \{ \epsilon, \delta \}.
\end{align*}
}
The last inequality is because 
\begin{equation*}
\frac{\epsilon^{-1/(2\nu)}\log d}{n} \lesssim \sqrt{\frac{\log d}{n^{2\nu/(2\nu+1)}}} \times \sqrt{\frac{\log d}{n}} = o(1) \sqrt{\frac{\log d}{n}}.
\end{equation*}
Thus, with probability at least $1-e^{-t}$, 
\begin{equation*}
Z \leq C_{q,\nu} \max \{ \epsilon^{(2\nu-1)/(2\nu)},  \epsilon^{-1/(2\nu)} \delta, \sqrt{\epsilon^{-1/\nu}t/n}, \epsilon^{-1/\nu} t/n \}.
\end{equation*}
Letting $t \to \infty$ sufficiently slowly (such that $\sqrt{t/n} \lesssim \epsilon$), we have, 
with probability approaching one, $Z \leq C_{q,\nu} \epsilon^{-1/(2\nu)} \max \{ \epsilon, \delta \}$, which implies the desired conclusion. 

\qed

\section{Proofs for Section 4}

\subsection{Proofs of Propositions 4.1 and 4.2}

We first point out that since $\sum_{i=1}^{n} \tilde{\bm{x}}_{i} = \bm{0}$, by a standard argument, 
we may assume that $c^{*} = \mathrm{E}[ y_{1} ] = 0$ for the analysis of $\hat{g}$.

\begin{proof}[Proof of Proposition 4.1]

The proof is a direct adaptation of that of \citet[][Theorem 6.1]{BRT09}, so we omit the detail here. 
\end{proof}

For a subset $T \subset \{ 1,\dots, d\}$, let $\bm{x}_{iG_{T}}$ denote the $m |T|  \times 1$ vector stacked by $\bm{x}_{iG_{j}}, j \in T$. 

\begin{proof}[Proof of Proposition 4.2]
Recall that $\hat{T}^{0} := \{ j \in \{ 1,\dots, d \} : \| \hat{g}_{j} \|_{2,n} > 0 \}$. Note that on the event $\Omega_{0}$, $\hat{T}^{0} = \{ j \in \{ 1,\dots, d \} : \| \hat{\bm{\beta}}_{G_{j}} \|_{E} \neq 0 \}$. 
By the Karush-Kuhn-Tucker condition, on the event $\Omega_{0}$, 
\begin{equation*}
\frac{\sqrt{m}\lambda_{1}}{n} \frac{\hat{\bm{\Sigma}}^{1/2}_{j}\hat{\bm{\beta}}_{G_{j}}}{\| \hat{\bm{\Sigma}}_{j}^{1/2} \hat{\bm{\beta}}_{G_{j}} \|_{E}} = \frac{1}{n} \sum_{i=1}^{n} \check{\bm{x}}_{iG_{j}}(y_{i} - \tilde{\bm{x}}_{i}'\hat{\bm{\beta}}), \ \forall j \in \hat{T}^{0}, 
\end{equation*}
which implies that on the event $\{ \lambda_{1} \geq 2  \Lambda \} \cap \Omega_{0}$, 
\begin{align*}
\frac{\sqrt{m}\lambda_{1}}{n} \sqrt{\hat{s}} &\leq \left \| \frac{1}{n} \sum_{i=1}^{n} \check{\bm{x}}_{iG_{\hat{T}^{0}}}(y_{i} - \tilde{\bm{x}}_{i}'\hat{\bm{\beta}}) \right \|_{E} \\
&\leq \left \| \frac{1}{n} \sum_{i=1}^{n}\check{\bm{x}}_{iG_{\hat{T}^{0}}}(u_{i} + r_{i}) \right \|_{E} \quad (r_{i} := g^{*}(\bm{z}_{i}) - \hat{g}(\bm{z}_{i}) ) \\
&\leq \left \| \frac{1}{n} \sum_{i=1}^{n} u_{i} \check{\bm{x}}_{iG_{\hat{T}^{0}}} \right \|_{E} + \left \| \frac{1}{n} \sum_{i=1}^{n} r_{i} \check{\bm{x}}_{iG_{\hat{T}^{0}}} \right \|_{E} \\
&\leq \frac{\sqrt{m}\lambda_{1}}{2n} \sqrt{\hat{s}} + 1.5 \left \| \frac{1}{n} \sum_{i=1}^{n} r_{i} \tilde{\bm{x}}_{iG_{\hat{T}^{0}}} \right \|_{E}.
\end{align*}
Applying the Cauchy-Schwarz inequality to the last line, we obtain 
\begin{equation*}
\frac{\sqrt{m}\lambda_{1}}{n} \sqrt{\hat{s}} \leq 3 \left \| \frac{1}{n} \sum_{i=1}^{n} r_{i} \tilde{\bm{x}}_{iG_{\hat{T}^{0}}} \right \|_{E} \leq 3  \| g^{*} - \hat{g} \|_{2,n}  \hat{\phi}_{\max}(\hat{s}).
\end{equation*} 
Thus, on the event $\{ \lambda_{1} \geq 2  \Lambda \vee 0 \} \cap \Omega_{0}$, 
\begin{equation*}
\hat{s} \leq \hat{C}^{2} \hat{\phi}_{\max}(\hat{s})^{2}s^{*}.
\end{equation*}
We wish to show that $\hat{\phi}_{\max}(\hat{s})^{2} \leq \min_{s \in \mathcal{S}} \hat{\phi}_{\max}(s)^{2}$. 
Because the map $s \mapsto \hat{\phi}_{\max} (s)^{2}$ is non-decreasing, it suffices to show that $\hat{s} \leq s$ for any $s \in \mathcal{S}$. 
Pick any $s \in \mathcal{S}$. Suppose on the contrary that $\hat{s} > s$. Then, 
{\footnotesize
\begin{align*}
\hat{s} \leq \hat{C}^{2} \hat{\phi}_{\max}(\hat{s})^{2}s^{*} = \hat{C}^{2} \hat{\phi}_{\max}(s \cdot (\hat{s}/s))^{2}s^{*} \leq \hat{C}^{2} \lceil \hat{s}/s \rceil \hat{\phi}_{\max}(s)^{2}s^{*} \leq 2\hat{C}^{2} (\hat{s}/s) \hat{\phi}_{\max}(s)^{2}s^{*} < \hat{s},
\end{align*}
}
a contradiction (we have used the property $\hat{\phi}_{\max}(ls)^{2} \leq \lceil l \rceil \hat{\phi}_{\max}(s)^{2}$ for $l > 0$, which can be shown is a similar way as the proof of \citet[][Lemma 8]{BC09}). Therefore, we obtain the desired conclusion. 
\end{proof}

\subsection{Proof of Theorem 4.2}

Notation: In condition (C5), let $K$ denote a fixed constant such that 
\begin{equation*}
\sup_{z \in [0,1]} \| (\psi_{1}(z),\dots,\psi_{m}(z))' \|_{E} \leq K m^{1/2}.
\end{equation*} 

Theorem 4.2 follows from Propositions 4.1 and 4.2 together with Lemmas \ref{lem2}-\ref{lem6} below. In what follows, we always assume conditions (C1)-(C5) and (C7).

\begin{lemma}
\label{lem2}
Assume that $m \log d/n \to 0$. Then, $\mathrm{P}(\Omega_{0}) \to 1$.
\end{lemma}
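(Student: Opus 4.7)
The plan is to reduce the claim to a uniform-in-$j$ matrix concentration statement about $\hat{\bm{\Sigma}}_j - \bm{I}_m$, then control its two natural pieces separately.

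First I note the spectral reduction: because $\hat{\bm{\Sigma}}_j$ is positive semidefinite and shares eigenvectors with $\hat{\bm{\Sigma}}_j^{1/2}$, the elementary bound $|\sqrt{1\pm\tau}-1|\leq \tau$ for $\tau\in[0,1]$ gives $\|\hat{\bm{\Sigma}}_j^{1/2}-\bm{I}_m\|\leq \|\hat{\bm{\Sigma}}_j-\bm{I}_m\|$ whenever the latter is $\leq 1/2$. It thus suffices to prove that $\max_{1\leq j\leq d}\|\hat{\bm{\Sigma}}_j-\bm{I}_m\|\leq 1/2$ with probability tending to one. Setting $\bm{v}_{iG_j}:=\bm{x}_{iG_j}-\mathrm{E}[\bm{x}_{1G_j}]$ and $\bar{\bm{v}}_{G_j}:=n^{-1}\sum_i\bm{v}_{iG_j}$, condition (C7)(b) gives $\mathrm{E}[\bm{v}_{1G_j}\bm{v}_{1G_j}']=\bm{I}_m$, and because $\tilde{\bm{x}}_{iG_j}=\bm{v}_{iG_j}-\bar{\bm{v}}_{G_j}$ a direct expansion yields
$$\hat{\bm{\Sigma}}_j-\bm{I}_m = \Bigl\{n^{-1}\textstyle\sum_{i=1}^n\bm{v}_{iG_j}\bm{v}_{iG_j}' - \bm{I}_m\Bigr\} - \bar{\bm{v}}_{G_j}\bar{\bm{v}}_{G_j}'.$$

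For the bulk term, the i.i.d. Hermitian summands $\bm{Y}_i:=\bm{v}_{iG_j}\bm{v}_{iG_j}'-\bm{I}_m$ are mean zero, and (C7)(a) gives $\|\bm{v}_{iG_j}\|_E\lesssim \sqrt{m}$, so $\|\bm{Y}_i\|\lesssim m$ and $\|\mathrm{E}[\bm{Y}_i^2]\|\lesssim m$. The matrix Bernstein inequality then yields $\mathrm{P}(\|n^{-1}\sum_i\bm{Y}_i\|>1/4)\leq 2m\exp(-cn/m)$, and a union bound over $j$ gives $\leq 2dm\exp(-cn/m)\to 0$ under $m\log d/n\to 0$ (using (C5), $d\geq n\geq m$, so $\log m\leq \log d$). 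The rank-one centering correction satisfies $\|\bar{\bm{v}}_{G_j}\bar{\bm{v}}_{G_j}'\|=\|\bar{\bm{v}}_{G_j}\|_E^2$; viewing $f_j:=\|\bar{\bm{v}}_{G_j}\|_E$ as a function of the independent bounded vectors $\bm{v}_{iG_j}$ (each with Euclidean norm $\leq 2K\sqrt{m}$ by (C7)(a)), $f_j$ has bounded differences $\lesssim\sqrt{m}/n$, so McDiarmid's inequality gives a subgaussian tail with variance proxy $\lesssim m/n$. Combined with $\mathrm{E}f_j\leq(\mathrm{E}f_j^2)^{1/2}=\sqrt{m/n}$ and a union bound over $j$, this shows $\max_j\|\bar{\bm{v}}_{G_j}\|_E^2\lesssim m\log d/n=o(1)$ in probability, hence is $\leq 1/4$ eventually.

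The main delicacy is calibrating the scaling of the centering correction to match that of the bulk: a naive coordinatewise Hoeffding argument on the entries of $\bar{\bm{v}}_{G_j}$ would force the much stronger requirement $m^2\log d/n\to 0$, so the vector-valued concentration step is essential. With both pieces bounded by $1/4$ uniformly in $j$, the triangle inequality closes the proof.
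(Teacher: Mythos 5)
Your proof is correct and follows the paper's decomposition exactly: reduce to controlling $\max_{1\leq j\leq d}\|\hat{\bm{\Sigma}}_j-\bm{I}_m\|$, split it into the bulk empirical-covariance deviation $n^{-1}\sum_i\bm{v}_{iG_j}\bm{v}_{iG_j}'-\bm{I}_m$ and the rank-one centering correction $\bar{\bm{v}}_{G_j}\bar{\bm{v}}_{G_j}'$, control each with a Gaussian-type tail, and union bound over $j$. The difference lies in the concentration tools. For the bulk term the paper merely cites Lemma 3.2 of Kato (2011); your self-contained matrix Bernstein calculation (with $\|\bm{Y}_i\|\lesssim m$ and $\|\mathrm{E}[\bm{Y}_i^2]\|\lesssim m$) is a valid substitute and, after the union bound, produces exactly the scaling requirement $n/m\gg\log d$ implied by $m\log d/n\to 0$ together with $d\geq n\geq m$ from (C5). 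For the centering correction the paper applies Corollary 4.5 of Ledoux (2001) (concentration of convex Lipschitz functions of independent bounded variables) to the map whose value is $\|\bar{\bm{x}}_{G_j}\|_E$, obtaining a Gaussian tail with variance proxy of order $m/n$; your McDiarmid step extracts the same tail from the same Lipschitz constant, so the two are interchangeable here. Your explicit spectral reduction via $|\sqrt{1\pm\tau}-1|\leq\tau$ is a slightly more careful rendering of the paper's one-line reduction to $\max_j\|\hat{\bm{\Sigma}}_j-\bm{I}_m\|\to_p 0$. One small caveat on your closing remark: entrywise Hoeffding (using only the magnitude bound $\lesssim\sqrt{m}$) does force $m^2\log d/n\to 0$ as you say, but an entrywise Bernstein bound exploiting the unit per-coordinate variance from (C7)(b) would also recover $m\log d/n\to 0$, so the vector-valued concentration is a convenience rather than strictly necessary for the rate.
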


\begin{proof}[Proof of Lemma \ref{lem2}]
Without loss of generality, we may assume that $\mathrm{E}[\bm{x}_{1}] = \bm{0}$. It suffices to show that $\max_{1 \leq j \leq d} \| \hat{\bm{\Sigma}}_{j} - \bm{I}_{m} \| \stackrel{p}{\to} 0$. Because $\hat{\bm{\Sigma}}_{j} = \hat{\bm{\Sigma}}_{0j} - \bar{\bm{x}}_{G_{j}}\bar{\bm{x}}_{G_{j}}' \ (\hat{\bm{\Sigma}}_{0j} := n^{-1} \sum_{i=1}^{n} \bm{x}_{iG_{j}} \bm{x}_{iG_{j}}')$, it suffices to show that 
\begin{equation*}
\max_{1 \leq j \leq d} \| \bar{\bm{x}}_{G_{j}} \|_{E} \stackrel{p}{\to} 0, \max_{1 \leq j \leq d} \| \hat{\bm{\Sigma}}_{0j} - \bm{I}_{m} \| \stackrel{p}{\to} 0.
\end{equation*}
The second assertion follows from Lemma 3.2 of \cite{K11}. We wish to show the first assertion. Pick any $j \in \{ 1,\dots,d \}$. By Corollary 4.5 of \cite{L01}, 
for all $t > 0$, with probability at least $1-e^{-t^{2}/2}$, we have 
\begin{equation*}
\| \bar{\bm{x}}_{G_{j}} \|_{E} \leq (1+tK)\sqrt{m/n},
\end{equation*}
where we have used the fact that $\mathrm{E}[\| \bar{\bm{x}}_{G_{j}} \|_{E}] \leq \sqrt{n^{-2} \sum_{i=1}^{n} \mathrm{E}[\| \bm{x}_{iG_{j}} \|_{E}^{2}]} =\sqrt{m/n}$ and $| \bm{\alpha}'\bm{x}_{1G_{j}} | \leq K \sqrt{m}$ (since we are considering the one-sided deviation inequality, $2$ in front of the exponential term in Corollary 4.5 of \cite{L01} can be replaced by $1$). 
By the union bound, the above inequality simultaneously holds for all $1 \leq j \leq d$ with probability at least $1-de^{-t^{2}/2}$.
Taking $t=2\sqrt{\log d}$, we obtain the desired result.  
\end{proof}

\begin{lemma}
\label{lem3}
There exists a positive constant $A_{1,u}$ depending only on the distribution of $u_{1}$ such that for any $\lambda_{1}$ satisfying 
\begin{equation*}
\lambda_{1} \geq A_{1,u} \sqrt{n} \left ( 1 +  \sqrt{\frac{\log d}{m}} \right ),
\end{equation*}
we have $\mathrm{P}\{ \lambda_{1} \geq 2  \Lambda \} \to 1$. 
\end{lemma}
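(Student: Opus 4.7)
The plan is to show that, conditionally on $\bm{z}_1,\dots,\bm{z}_n$ and on the event $\Omega_0$, each term $\|\sum_{i=1}^n u_i\check{\bm{x}}_{iG_j}/\sqrt{m}\|_E$ is subgaussian with a mean of order $\sqrt{n}$ and a Lipschitz-in-$\bm{u}$ constant of order $\sqrt{n/m}$, and then to close the argument with a union bound over $j$ and a final unconditioning.

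First I would note the key deterministic identity: on $\Omega_0$, each $\hat{\bm{\Sigma}}_j$ is invertible and $\hat{\bm{\Sigma}}_j^{-1/2}\hat{\bm{\Sigma}}_j\hat{\bm{\Sigma}}_j^{-1/2}=\bm{I}_m$, so
\begin{equation*}
\frac{1}{n}\sum_{i=1}^n \check{\bm{x}}_{iG_j}\check{\bm{x}}_{iG_j}' = \bm{I}_m,\qquad \sum_{i=1}^n\|\check{\bm{x}}_{iG_j}\|_E^2 = nm.
\end{equation*}
Then, writing $F_j(\bm{u}) := \|\sum_{i=1}^n u_i\check{\bm{x}}_{iG_j}/\sqrt{m}\|_E = \sup_{\bm{\alpha}\in\mathbb{S}^{m-1}} \bm{\alpha}'\sum_{i=1}^n u_i \check{\bm{x}}_{iG_j}/\sqrt{m}$, the map $\bm{u}\mapsto F_j(\bm{u})$ is Lipschitz (in Euclidean norm) with constant
\begin{equation*}
\sup_{\bm{\alpha}\in\mathbb{S}^{m-1}}\sqrt{\sum_{i=1}^n(\bm{\alpha}'\check{\bm{x}}_{iG_j})^2/m}=\sqrt{n/m}.
\end{equation*}

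Next I would bound $\mathrm{E}_u[F_j(\bm{u})]$. By Jensen's inequality and the identity above, $\mathrm{E}_u[F_j(\bm{u})]\le \sqrt{\mathrm{E}_u[F_j(\bm{u})^2]} = \sqrt{\sum_i\mathrm{E}[u_i^2|\bm{z}_i]\|\check{\bm{x}}_{iG_j}\|_E^2/m}\le \sigma_u\sqrt{n}$ under either branch of condition (C2) (with $\sigma_u$ being the bound on the support in case (a) or the conditional standard deviation bound in case (b)). Then I would apply the appropriate concentration inequality for Lipschitz functions of independent variables: Theorem 7.1 of Ledoux (2001) in the Gaussian case (C2)(b) after rescaling $\tilde{u}_i=u_i/\sigma_u(\bm{z}_i)$, and Corollary 4.8 of Ledoux (2001) in the bounded case (C2)(a). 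In either case, on $\Omega_0$ and conditionally on $\bm{z}_1,\dots,\bm{z}_n$, for every $t>0$,
\begin{equation*}
\mathrm{P}_u\bigl\{F_j(\bm{u})\ge \sigma_u\sqrt{n}+C\sigma_u t\sqrt{n/m}\bigr\}\le c_1 e^{-c_2 t^2}.
\end{equation*}

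Finally I would take a union bound over $1\le j\le d$, choose $t\asymp\sqrt{\log d}$ so that $c_1 d\, e^{-c_2 t^2}\to 0$, and obtain $\max_j F_j(\bm{u})\le A_{1,u}\sqrt{n}(1+\sqrt{\log d/m})/2$ with probability conditional on the $\bm{z}_i$'s tending to one on $\Omega_0$; integrating out the $\bm{z}_i$'s and using $\mathrm{P}(\Omega_0)\to1$ (Lemma B.1) gives $\mathrm{P}\{\lambda_1\ge 2\Lambda\}\to1$ for the stated choice of $A_{1,u}$. The main obstacle I anticipate is purely bookkeeping: handling the two cases of condition (C2) in a unified way and verifying that the Lipschitz-constant calculation on $\Omega_0$ correctly carries through the generalized-inverse convention for $\hat{\bm{\Sigma}}_j^{-1/2}$; once that is clean, the union bound and concentration estimate yield the required logarithmic factor $\sqrt{\log d/m}$ essentially for free.
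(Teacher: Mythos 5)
Your proposal is correct and follows essentially the same route as the paper: the Lipschitz constant $\sqrt{n/m}$ from $n^{-1}\sum_i\check{\bm{x}}_{iG_j}\check{\bm{x}}_{iG_j}'$, the bound $\mathrm{E}_u[F_j]\le\sigma_u\sqrt{n}$ via Jensen and the trace, Ledoux's Corollary 4.8 (bounded case) or Theorem 7.1 (Gaussian case after rescaling by $\sigma_u(\bm{z}_i)$), and a union bound with $t\asymp\sqrt{\log d}$. The only cosmetic difference is that you condition on $\Omega_0$ to get $n^{-1}\sum_i\check{\bm{x}}_{iG_j}\check{\bm{x}}_{iG_j}'=\bm{I}_m$ exactly, whereas the paper avoids this because the generalized-inverse convention already forces that matrix to be a projection (so its operator norm is $\le 1$ and its trace is $\le m$ unconditionally).
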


\begin{proof}[Proof of Lemma \ref{lem3}]
This follows from deviation inequalities in product spaces. We first consider case (a) in condition (C2). By normalization, 
it suffices to consider the case that $| u_{1} | \leq 1$ almost surely. 
Pick any $1 \leq j \leq d$. Consider the function
\begin{equation*}
F_{j}(\bm{u},\bm{z}_{\cdot j}) := \left \| \sum_{i=1}^{n} u_{i} \check{\bm{x}}_{iG_{j}} / \sqrt{m} \right \|_{E}, \ \text{where} \ \bm{u}=(u_{1},\dots,u_{n})', \ \bm{z}_{\cdot j} = (z_{1j}, \dots, z_{nj})'.
\end{equation*}
(Recall that $\bm{x}_{iG_{j}}$ is generated by $z_{ij}$.) Given $\bm{z}_{1},\dots,\bm{z}_{n}$, the map $\bm{u} \mapsto F_{j}(\bm{u},\bm{z}_{\cdot j})$ is  Lipschitz continuous with Lipschitz constant bounded by $\sqrt{n/m}$ (invoke that the maximum eigenvalue of $n^{-1} \sum_{i=1}^{n} \check{\bm{x}}_{iG_{j}}\check{\bm{x}}_{iG_{j}}'$ is $1$). Thus, by Corollary 4.8 of \cite{L01}, for all $t > 0$, 
\begin{equation*}
\mathrm{P}_{u}\{ F_{j}(\bm{u},\bm{z}_{\cdot j}) \geq \mathrm{E}_{u}[F_{j}(\bm{u},\bm{z}_{\cdot j})] + Ct \sqrt{n/m} \} \leq c_{1}e^{-c_{2} t^{2}},
\end{equation*}
where $\mathrm{P}_{u}$  $(\mathrm{E}_{u})$ denotes the conditional probability (expectation, respectively) of $u_{1},\dots,u_{n}$ given $\bm{z}_{1},\dots,\bm{z}_{n}$, and $c_{1} > 0$ and $c_{2} > 0$ are universal constants.
A direct calculation shows that $\mathrm{E}_{u}[ F_{j}(\bm{u},\bm{z}_{\cdot j})] \leq \sqrt{n}$ (invoke that the trace of the matrix $n^{-1} \sum_{i=1}^{n} \check{\bm{x}}_{iG_{j}} \check{\bm{x}}_{iG_{j}}'$ is bounded by $m$). Therefore, taking $t= \sqrt{2c_{2}^{-1}\log d}$, we have, with probability at least $1-c_{1}d^{-2}$, 
\begin{equation*}
F_{j}(\bm{u},\bm{z}_{\cdot j}) \leq \sqrt{n} + C \sqrt{n\log d /m}.
\end{equation*}
By the union bound, the above inequality simultaneously holds for all $1 \leq j \leq d$ with probability at least $1-c_{1}d^{-1}$. 
Recalling that $d \to \infty$, we obtain the desired conclusion for the case that $| u_{1} | \leq 1$ almost surely. 

In case (b) in Condition (C2), we use Theorem 7.1 of \cite{L01}. Recall that $u_{1} | \bm{z}_{1} \sim N(0,\sigma_{u}(\bm{z}_{1})^{2})$ and $\sigma_{u}(\bm{z}_{1}) \leq \sigma_{u}$ almost surely. 
Put $\tilde{u}_{i} := u_{i}/\sigma_{u}(\bm{z}_{i})$. Then, $\tilde{u}_{1},\dots,\tilde{u}_{n}$ are independent standard normal random variables independent of 
$\bm{z}_{1},\dots,\bm{z}_{n}$. Define 
\begin{equation*}
F_{j}(\tilde{\bm{u}},\bm{z}_{1}^{n}) := \left \| \sum_{i=1}^{n} \tilde{u}_{i} \sigma_{u}(\bm{z}_{i}) \check{\bm{x}}_{iG_{j}} / \sqrt{m} \right \|_{E}, \ \tilde{\bm{u}}=(\tilde{u}_{1},\dots,\tilde{u}_{n})', \ \bm{z}_{1}^{n} = \{ \bm{z}_{1},\dots,\bm{z}_{n} \}'.
\end{equation*}
Given $\bm{z}_{1},\dots,\bm{z}_{n}$, the map $\tilde{\bm{u}} \mapsto F_{j}(\tilde{\bm{u}},\bm{z}_{1}^{n})$ is Lipschitz continuous with Lipschitz constant bounded by $\sigma_{u}\sqrt{n/m}$. Thus, by Theorem 7.1 of \cite{L01}, for all $t > 0$, 
\begin{equation*}
\mathrm{P}_{\tilde{u}} \{ F_{j}(\tilde{\bm{u}},\bm{z}_{1}^{n}) \geq \mathrm{E}_{\tilde{u}}[F_{j}(\tilde{\bm{u}},\bm{z}_{1}^{n})] +  \sigma_{u} t\sqrt{n/m} \} \leq e^{-t^{2}/2},
\end{equation*}
where, by a direct calculation, $\mathrm{E}_{\tilde{u}}[F_{j}(\tilde{\bm{u}},\bm{z}_{1}^{n})] \leq \sigma_{u} \sqrt{n}$. The rest of the procedure is exactly the same as the previous one. 
\end{proof}

\begin{lemma}
\label{lem4}
Let $\phi_{\max}(s)$ denote the $s$-th group sparse maximum eigenvalue of $\bm{\Sigma}^{1/2}$: $\phi_{\max}(s) := \sup_{| T | \leq s,\bm{\alpha} \in \mathbb{S}_{T}^{dm-1}} \| \bm{\Sigma}^{1/2}\bm{\alpha} \|_{E}$. 
Assume that $s,m,d$ and $n$ obey the growth condition 
\begin{equation}
\frac{s^{2}m\log (d \vee n)}{n} \to 0.
\label{growth}
\end{equation}
Then, $\hat{\phi}_{\max}(s) \lesssim_{p} 1$ provided that $\phi_{\max}(s) \lesssim 1$. 
\end{lemma}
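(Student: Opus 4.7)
The plan is to reduce the claim to the estimate
\begin{equation*}
\max_{|T|\leq s} \| \hat{\bm{\Sigma}}_{T,T} - \bm{\Sigma}_{T,T} \| = o_{p}(1),
\end{equation*}
where $\hat{\bm{\Sigma}}_{T,T}$ and $\bm{\Sigma}_{T,T}$ denote the principal submatrices of $\hat{\bm{\Sigma}}$ and $\bm{\Sigma}$ indexed by the groups in $T$. Once this holds, the triangle inequality on operator norms delivers $\hat{\phi}_{\max}(s)^{2} = \max_{|T|\leq s}\|\hat{\bm{\Sigma}}_{T,T}\| \leq \max_{|T|\leq s}\|\bm{\Sigma}_{T,T}\| + o_{p}(1) = \phi_{\max}(s)^{2} + o_{p}(1) \lesssim_{p} 1$, giving the conclusion.

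To control the difference, I would first re-center: set $\bm{y}_{i} := \bm{x}_{i} - \bm{\mu}$ with $\bm{\mu} := \mathrm{E}[\bm{x}_{1}]$, so $\tilde{\bm{x}}_{i} = \bm{y}_{i} - \bar{\bm{y}}$ and
\begin{equation*}
\hat{\bm{\Sigma}} - \bm{\Sigma} = \underbrace{\frac{1}{n}\sum_{i=1}^{n}\bigl(\bm{y}_{i}\bm{y}_{i}' - \bm{\Sigma}\bigr)}_{A_{n}} \; - \; \underbrace{\bar{\bm{y}}\bar{\bm{y}}'}_{B_{n}}.
\end{equation*}
On each sub-block, $\|(\hat{\bm{\Sigma}} - \bm{\Sigma})_{T,T}\| \leq \|A_{n,T,T}\| + \|\bar{\bm{y}}_{G_{T}}\|_{E}^{2}$, and I would bound the two pieces separately.

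For the correction term $B_{n}$, I would mimic the proof of Lemma \ref{lem2}. Condition (C7)(a) gives $\|\bm{y}_{iG_{T}}\|_{E} \leq 2K\sqrt{sm}$, and $\mathrm{E}\|\bar{\bm{y}}_{G_{T}}\|_{E}^{2} = n^{-1}\tr(\bm{\Sigma}_{T,T}) \leq sm\phi_{\max}(s)^{2}/n$. Applying Corollary 4.5 of \cite{L01} to the Lipschitz map $(\bm{y}_{1},\ldots,\bm{y}_{n}) \mapsto \|\bar{\bm{y}}_{G_{T}}\|_{E}$, then union-bounding over the $\binom{d}{s} \leq d^{s}$ subsets with threshold $t \asymp \sqrt{s\log d}$, yields $\max_{|T|\leq s}\|\bar{\bm{y}}_{G_{T}}\|_{E}^{2} \lesssim_{p} s^{2}m\log d / n = o(1)$ under the growth condition. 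For $A_{n}$, I would fix $T$ with $|T|\leq s$ and apply the matrix Bernstein inequality to the iid centered symmetric $(sm)\times(sm)$ summands $X_{i} := \bm{y}_{iG_{T}}\bm{y}_{iG_{T}}' - \bm{\Sigma}_{T,T}$. These satisfy $\|X_{i}\| \lesssim sm$ and
\begin{equation*}
\Bigl\| \sum_{i=1}^{n}\mathrm{E}[X_{i}^{2}] \Bigr\| \leq n\,\bigl\| \mathrm{E}[\|\bm{y}_{1G_{T}}\|_{E}^{2}\,\bm{y}_{1G_{T}}\bm{y}_{1G_{T}}']\bigr\| \lesssim n\,sm\,\phi_{\max}(s)^{2} \lesssim nsm,
\end{equation*}
so for $0<t\leq 1$, $\mathrm{P}(\|A_{n,T,T}\|\geq t) \leq 2sm\exp(-cnt^{2}/(sm))$. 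A union bound over the $d^{s}$ subsets with $t^{2} = w_{n}\cdot s^{2}m\log(d\vee n)/n$ for some slowly diverging $w_{n}$ forces the overall tail to vanish while $t\to 0$ under the growth condition, giving $\max_{|T|\leq s}\|A_{n,T,T}\| = o_{p}(1)$.

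The main obstacle is this last step. Because each group has dimension $m\to\infty$, both the per-summand operator-norm bound and the variance proxy are of order $sm$ rather than $s$, so the Bernstein exponent is only $nt^{2}/(sm)$. Matching this against the $s\log d$ cost of the union bound over $d^{s}$ groupwise $s$-sparse supports pins $t$ at the scale $\sqrt{s^{2}m\log d/n}$, which is exactly what the growth condition $s^{2}m\log(d\vee n)/n\to 0$ drives to zero. A naive $\varepsilon$-net argument on the unit sphere of $\mathbb{R}^{sm}$ would add an $sm\log 9$ entropy cost to the exponent and require the stronger condition $s^{2}m^{2}/n\to 0$; hence the reliance on matrix concentration, whose dimension dependence is only logarithmic, is essential.
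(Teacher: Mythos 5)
Your proposal is correct, and on the main term it departs from the paper's proof in the key technical lemma. The paper controls $\max_{|T|\le s}\|\hat{\bm{\Sigma}}_{0T}-\bm{\Sigma}_{T}\|$ by combining Massart's form of Talagrand's concentration inequality (to reduce the tail to the mean, using the sup-of-quadratic-forms representation over $\bm{\alpha}\in\mathbb{S}^{sm-1}$) with Rudelson's inequality (to bound the mean by $\max\{K\phi_{\max}(s)\sqrt{sm\log(sm)/n},\,K^{2}sm\log(sm)/n\}$), then union-bounds over subsets $T$. You instead apply the matrix Bernstein inequality directly to the i.i.d.\ centered summands $X_{i}=\bm{y}_{iG_{T}}\bm{y}_{iG_{T}}'-\bm{\Sigma}_{T,T}$, with $\|X_{i}\|\lesssim sm$ by (C7)(a) and variance proxy $\|\sum_{i}\mathrm{E}[X_{i}^{2}]\|\lesssim nsm\phi_{\max}(s)^{2}$, and absorb the dimensional prefactor $sm$ into the union bound since $\log(sm)\lesssim\log n\le\log(d\vee n)$. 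Both routes achieve the exponent $cnt^{2}/(sm)$ needed to beat the $s\log(ed/s)$ union-bound cost, yielding the threshold $t^{2}\asymp s^{2}m\log(d\vee n)/n$ driven to zero by the growth condition; and both crucially avoid the $\varepsilon$-net argument, which (as you correctly diagnose) would add an $sm$ entropy term to the exponent and force the stronger condition $s^{2}m^{2}/n\to 0$. Your matrix-Bernstein route is a one-shot tail bound and avoids the separate mean estimate via Rudelson's inequality; the paper's route is more in keeping with the empirical-process machinery used elsewhere (Lemma B.4's restricted-eigenvalue argument explicitly reuses the bound on $\max_{|T|\le 2s^{*}}\|\hat{\bm{\Sigma}}_{T}-\bm{\Sigma}_{T}\|$, so the choice is mostly stylistic). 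The treatment of the centering term $\bar{\bm{y}}_{G_{T}}\bar{\bm{y}}_{G_{T}}'$ via Ledoux's Corollary 4.5 is identical to the paper's.
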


Recall that for a subset $T \subset \{ 1,\dots, d\}$, $\bm{x}_{iG_{T}}$ denotes the $m |T|  \times 1$ vector stacked by $\bm{x}_{iG_{j}}, j \in T$.  We use the notation: $\hat{\bm{\Sigma}}_{T} := n^{-1} \sum_{i=1}^{n} \tilde{\bm{x}}_{iG_{T}}\tilde{\bm{x}}_{iG_{T}}', \hat{\bm{\Sigma}}_{0T} := n^{-1} \sum_{i=1}^{n} \bm{x}_{iG_{T}}\bm{x}_{iG_{T}}'$ and $\bm{\Sigma}_{T}:=\mathrm{E}[\bm{x}_{1G_{T}}\bm{x}_{1G_{T}}']$.

\begin{proof}[Proof of Lemma \ref{lem4}]

Without loss of generality, we may assume that $\mathrm{E}[\bm{x}_{1}]=\bm{0}$.
Pick any subset $T$ of $\{ 1,\dots,d \}$ such that $| T | = s$. We wish to evaluate a tail probability of $\| \hat{\bm{\Sigma}}_{T} - \bm{\Sigma}_{T} \|$. 
Since $\| \hat{\bm{\Sigma}}_{T} - \bm{\Sigma}_{T} \| \leq \| \hat{\bm{\Sigma}}_{0T} - \bm{\Sigma}_{T} \| + \| \bar{\bm{x}}_{G_{T}} \|_{E}^{2}$, we separately 
evaluate the right two terms. 

We first evaluate the term $\| \hat{\bm{\Sigma}}_{0T}-\bm{\Sigma}_{T} \|$. Invoke the expression
\begin{equation*}
\| \hat{\bm{\Sigma}}_{0T} - \bm{\Sigma}_{T} \| = \sup_{\bm{\alpha} \in \mathbb{S}^{sm-1}} \left | \frac{1}{n} \sum_{i=1}^{n} (\bm{\alpha}'\bm{x}_{iG_{T}})^{2} - \mathrm{E}[(\bm{\alpha}'\bm{x}_{1G_{T}})^{2}] \right |.
\end{equation*} 
Applying Massart's (2000) form of Talagrand's (1996) inequality to the right sum, for all $t > 0$, with probability at least $1-e^{-t}$, we have 
\begin{equation*}
\| \hat{\bm{\Sigma}}_{0T} - \bm{\Sigma}_{T} \| \leq 2\mathrm{E}[\| \hat{\bm{\Sigma}}_{0T} - \bm{\Sigma}_{T} \|] + C K \phi_{\max} (s) \sqrt{tsm/n} + CK^{2} tsm/n,
\end{equation*}
where we have use the fact that $| \bm{\alpha}'\bm{x}_{1G_{T}} | \leq K \sqrt{sm}$ and $\mathrm{E}[(\bm{\alpha}'\bm{x}_{1G_{T}})^{4}] \leq K^{2} sm \mathrm{E}[(\bm{\alpha}'\bm{x}_{1G_{T}})^{2}] \leq K^{2} sm \phi_{\max} (s)^{2}$. We now bound the expectation $\mathrm{E}[\| \hat{\bm{\Sigma}}_{0T} - \bm{\Sigma}_{T} \|]$ by Rudelson's (1999) inequality:
\begin{equation*}
\mathrm{E}[\| \hat{\bm{\Sigma}}_{0T} - \bm{\Sigma}_{T} \|] \leq \max \left \{ CK \phi_{\max}(s) \sqrt{\frac{sm\log (sm)}{n}}, C^{2} K^{2} \frac{sm \log (sm)}{n} \right \}.
\end{equation*}
Thus, for all $t > 0$, with probability at least $1-e^{-t}$, we have 
\begin{equation*}
\| \hat{\bm{\Sigma}}_{0T} - \bm{\Sigma}_{T} \| \leq C K \max \left \{  \phi_{\max}(s) \sqrt{\frac{sm(t \vee \log (sm))}{n}},  \frac{K sm (t \vee \log (sm))}{n} \right \}.
\end{equation*}
Because the number of all subsets $T$ of $\{ 1,\dots, d \}$ such that $|T| = s$ is $\binom{d}{s} \leq (ed/s)^{s}$, the above inequality simultaneously holds for all such $T$ with probability at least $1-\exp \{ s \log (ed/s) - t\}$. Taking $t=2 s\log (ed/s)$ and recalling condition (\ref{growth}), we have
\begin{equation}
\max_{|T| \leq s} \| \hat{\bm{\Sigma}}_{0T} - \bm{\Sigma}_{T} \| = \max_{| T | =s} \| \hat{\bm{\Sigma}}_{0T} - \bm{\Sigma}_{T} \| \lesssim_{p} o(1) (\phi_{\max}(s)  \vee 1)= o(1),
\label{one}
\end{equation}
provided that $\phi_{\max}(s) \lesssim 1$. 

It remains to bound $\| \bar{\bm{x}}_{G_{T}} \|_{E}$. By Corollary 4.5 of \cite{L01},  
for all $t > 0$, with probability at least $1-e^{-t^{2}/2}$, we have 
\begin{equation*}
\| \bar{\bm{x}}_{G_{T}} \|_{E} \leq (1+CKt) \sqrt{sm/n},
\end{equation*}
where we have used the fact that $\mathrm{E}[ \| \bar{\bm{x}}_{G_{T}} \|_{E}] \leq \sqrt{n^{-2} \sum_{i=1}^{n} \mathrm{E}[\| \bm{x}_{iG_{T}} \|_{E}^{2}}]=\sqrt{sm/n}$ and $| \bm{\alpha}'\bm{x}_{1G_{T}} | \leq K\sqrt{sm}$. 
Taking $t=2\sqrt{s\log (ed/s)}$, we have 
\begin{equation}
\max_{|T| \leq s} \| \bar{\bm{x}}_{G_{T}} \|_{E} = \max_{|T|=s} \| \bar{\bm{x}}_{G_{T}} \|_{E} = o_{p}(1).
\label{two}
\end{equation}

Combining (\ref{one}) and (\ref{two}), we have 
\begin{equation*}
\hat{\phi}_{\max}(s)^{2} \leq \phi_{\max}(s)^{2} + \max_{| T | \leq s} \| \hat{\bm{\Sigma}}_{T} - \bm{\Sigma}_{T} \| = \phi_{\max}(s)^{2} + o_{p}(1) \lesssim_{p} 1.
\end{equation*}
\end{proof}

\begin{lemma}
\label{lem5}
Let $\kappa$ denote the $\mathbb{C}$-restricted eigenvalue of $\bm{\Sigma}^{1/2}$: \\
$\kappa := \min_{\bm{\alpha} \in \mathbb{S}^{dm-1} \cap \mathbb{C}} \| \bm{\Sigma}^{1/2} \bm{\alpha} \|_{E}$. 
Assume that $s^{*},m,d$ and $n$ obey the growth condition 
\begin{equation}
\frac{(s^{*})^{2}m\log (d \vee n)}{n} \to 0.
\label{growth2}
\end{equation}
Then, we have $\hat{\kappa} \gtrsim_{p} 1$ provided that $\phi_{\max} (2s^{*}) \lesssim 1$ and $\kappa \gtrsim 1$. 
\end{lemma}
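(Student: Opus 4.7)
The plan is to show that $|\bm{\alpha}'(\hat{\bm{\Sigma}}-\bm{\Sigma})\bm{\alpha}| = o_p(1)$ uniformly over $\bm{\alpha} \in \mathbb{S}^{dm-1} \cap \mathbb{C}$. Combined with the identity
\[
\| \hat{\bm{\Sigma}}^{1/2}\bm{\alpha} \|_E^2 = \| \bm{\Sigma}^{1/2}\bm{\alpha} \|_E^2 + \bm{\alpha}'(\hat{\bm{\Sigma}} - \bm{\Sigma})\bm{\alpha}
\]
and the hypothesis $\| \bm{\Sigma}^{1/2}\bm{\alpha}\|_E^2 \geq \kappa^2 \gtrsim 1$, this yields $\hat{\kappa}^2 \geq \kappa^2 - o_p(1)$, which is $\gtrsim_p 1$.

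The key technical step is to reduce the uniform statement over the cone to a bound on the operator norm of principal submatrices of $\hat{\bm{\Sigma}} - \bm{\Sigma}$ of size $O(s^*)$. I would adapt the block decomposition argument of \cite{BRT09} to the grouped setting as follows: given $\bm{\alpha} \in \mathbb{S}^{dm-1} \cap \mathbb{C}$, order the groups $j \in (T^*)^c$ so that $\| \bm{\alpha}_{G_j}\|_E$ is non-increasing, and partition them into consecutive blocks $T_1, T_2, \ldots$ of size $s^*$. A standard monotonicity argument then gives $\sum_{k \geq 2} \|\bm{\alpha}_{G_{T_k}}\|_E \leq (s^*)^{-1/2} \sum_{j \in (T^*)^c} \|\bm{\alpha}_{G_j}\|_E$, which by the cone inequality and Cauchy--Schwarz is at most $21 \|\bm{\alpha}_{G_{T^*}}\|_E \leq 21$. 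Setting $T_0 := T^* \cup T_1$ (so $|T_0| \leq 2s^*$), the total $\|\bm{\alpha}_{G_{T_0}}\|_E + \sum_{k \geq 2} \|\bm{\alpha}_{G_{T_k}}\|_E$ is bounded by a universal constant.

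Expanding the quadratic form block by block, I obtain
\[
|\bm{\alpha}'(\hat{\bm{\Sigma}} - \bm{\Sigma})\bm{\alpha}| \leq \Bigl( \max_{|T| \leq 2s^*} \|\hat{\bm{\Sigma}}_T - \bm{\Sigma}_T\| \Bigr) \Bigl( \|\bm{\alpha}_{G_{T_0}}\|_E + \sum_{k \geq 2} \|\bm{\alpha}_{G_{T_k}}\|_E \Bigr)^2,
\]
since each union $T_k \cup T_l$ has cardinality at most $2s^*$. The factor $\max_{|T| \leq 2s^*} \|\hat{\bm{\Sigma}}_T - \bm{\Sigma}_T\|$ is $o_p(1)$ by exactly the argument employed in the proof of Lemma \ref{lem4} (Talagrand's inequality combined with Rudelson's inequality, together with a union bound over the $\binom{d}{2s^*}$ choices of index subset), under the growth condition $(s^*)^2 m \log(d \vee n)/n \to 0$; the hypothesis $\phi_{\max}(2s^*) \lesssim 1$ enters as the variance factor inside Rudelson's bound.

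The main obstacle I anticipate is the bookkeeping in the group-level block decomposition: the blocking must be arranged so that (i) the cone inequality survives the reordering, and (ii) each union $T_k \cup T_l$ retains cardinality at most $2s^*$ as sets of group indices (not coordinate indices), so that the appropriate sparse eigenvalue bound applies at the level of $2 s^* m$-dimensional principal submatrices. Once this is in place, the remainder is essentially a repackaging of the concentration estimate already derived in the proof of Lemma \ref{lem4}.
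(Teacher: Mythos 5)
Your overall strategy (decompose $\bm{\alpha}$ into blocks of $O(s^{*})$ groups, reduce the quadratic form to an operator-norm bound on principal submatrices of size $O(s^{*}m)$, and reuse the concentration argument from Lemma B.4) is the same as the paper's. However, there is a bookkeeping error in your decomposition that the paper avoids, and it renders your final display false as written.

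You order the groups in $(T^{*})^{c}$, form blocks $T_{1}, T_{2}, \ldots$ of size $s^{*}$, and then merge $T_{0} := T^{*} \cup T_{1}$, so $|T_{0}| \leq 2s^{*}$. The remaining blocks $T_{2}, T_{3}, \ldots$ have size $\leq s^{*}$. When you expand the quadratic form over pairs of these blocks, the cross terms involving $T_{0}$ live on unions $T_{0} \cup T_{k}$ of cardinality up to $3s^{*}$, not $2s^{*}$; so the claim ``each union $T_{k} \cup T_{l}$ has cardinality at most $2s^{*}$'' fails, and the factor you extract should be $\max_{|T| \leq 3s^{*}} \| \hat{\bm{\Sigma}}_{T} - \bm{\Sigma}_{T} \|$, not $\max_{|T| \leq 2s^{*}}$. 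The paper's decomposition sidesteps this cleanly: it takes $T_{0}$ to be the $s^{*}$ largest groups of $\bm{\alpha}$ over \emph{all} of $\{1, \ldots, d\}$ (not just over $(T^{*})^{c}$), so every block has size $\leq s^{*}$ and every pairwise union has size $\leq 2s^{*}$. The crucial observation that makes this work is that $\| \bm{\alpha}_{G_{T^{*}}} \|_{E} \leq \| \bm{\alpha}_{G_{T_{0}}} \|_{E}$ because $T_{0}$ contains the largest groups, so the cone inequality (which involves $T^{*}$) can still be brought to bear even though the blocking is not anchored on $T^{*}$.

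The error is not fatal: $\phi_{\max}(3s^{*})^{2} \leq 2 \phi_{\max}(2s^{*})^{2} \lesssim 1$, and the growth condition $(s^{*})^{2} m \log(d \vee n)/n \to 0$ is insensitive to replacing $2s^{*}$ by $3s^{*}$, so tracking $3s^{*}$ instead of $2s^{*}$ would still give the stated conclusion. Alternatively, keeping $T^{*}$ and $T_{1}$ as separate blocks (rather than merging them into $T_{0}$) restores the $2s^{*}$ bound on all unions, since $\| \bm{\alpha}_{G_{T^{*}}} \|_{E} \leq 1$ and $\| \bm{\alpha}_{G_{T_{1}}} \|_{E} \leq 1$ keep the block-norm sum under control. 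But as written, the cardinality claim you use to justify the key display is incorrect.
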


\begin{proof}[Proof of Lemma \ref{lem5}]
We partly use an idea of \cite{Zhou09}, but the overall proof is quite different.
Put $c_{0} = 21$.
For any $\bm{\alpha} \in \mathbb{C}$, we decompose $\bm{\alpha}$ into a set of subvectors $\bm{\alpha}_{G_{T_{0}}}, \bm{\alpha}_{G_{T_{1}}},\dots,\bm{\alpha}_{G_{T_{L}}}$ in such a way that $T_{0}$ corresponds to the $s^{*}$ largest groups in $\bm{\alpha}$ in the Euclidean norm, $T_{1}$ corresponds to the $s^{*}$ largest groups in $\bm{\alpha}_{G_{T_{0}^{c}}}$, and so on. Then, we have 
$T_{0}^{c} = \cup_{l=1}^{L} T_{l}$ where $| T_{l} | = s^{*}$ for $1 \leq l \leq L-1$ and $| T_{L} | \leq s^{*}$. Because for each $l \geq 1$,
\begin{equation*}
\| \bm{\alpha}_{G_{T_{l}}} \|_{E} \leq \sqrt{s^{*}} \max_{j \in T_{l}} \| \bm{\alpha}_{G_{j}} \|_{E} \leq \frac{1}{\sqrt{s^{*}}} \sum_{j \in T_{l-1}} \| \bm{\alpha}_{G_{j}} \|_{E},
\end{equation*}
we have
\begin{align*}
\sum_{l=1}^{L} \| \bm{\alpha}_{G_{T_{l}}} \|_{E} &\leq \frac{1}{\sqrt{s^{*}}} \sum_{l=0}^{L-1} \sum_{j \in T_{l}} \| \bm{\alpha}_{G_{j}} \|_{E} \\
&\leq \frac{1}{\sqrt{s^{*}}} \sum_{j=1}^{d} \| \bm{\alpha}_{G_{j}} \|_{E} \\
&\leq \frac{1}{\sqrt{s^{*}}} (1+c_{0}) \sum_{j \in T^{*}} \| \bm{\alpha}_{G_{j}} \|_{E} \\
&\leq (1+c_{0}) \| \bm{\alpha}_{G_{T^{*}}} \|_{E} \\
&\leq (1+c_{0}) \| \bm{\alpha}_{G_{T_{0}}} \|_{E},
\end{align*}
where we have use the fact that $\sum_{j \in T^{*}} \| \bm{\alpha}_{G_{j}} \|_{E} \leq c_{0} \sum_{j \in (T^{*})^{c}} \| \bm{\alpha}_{G_{j}} \|_{E}$, and 
$\| \bm{\alpha}_{G_{T^{*}}} \|_{E} \leq \| \bm{\alpha}_{G_{T_{0}}} \|_{E}$ by construction. Therefore, we have 
\begin{equation}
\sum_{l=0}^{L} \| \bm{\alpha}_{G_{T_{l}}} \|_{E} \leq (2+c_{0}) \| \bm{\alpha}_{G_{T_{0}}} \|_{E}.
\label{bound}
\end{equation}

In what follows, we identify $\bm{\alpha}_{G_{T_{l}}}$ as the $dm \times 1$ vector $\tilde{\bm{\alpha}}$ such that $\tilde{\bm{\alpha}}_{G_{T_{l}}} = \bm{\alpha}_{G_{T_{l}}}$ and all the other elements of $\tilde{\bm{\alpha}}$ are zero. Under this identification, $\bm{\alpha}$ can be written as $\bm{\alpha} = \sum_{l=0}^{L} \bm{\alpha}_{G_{T_{l}}}$. 
Invoke now that
\begin{align}
&| \bm{\alpha}' (\hat{\bm{\Sigma}} - \bm{\Sigma})\bm{\alpha} | \notag \\
&\leq \sum_{l=0}^{L} \sum_{l'=0}^{L} |  \bm{\alpha}_{G_{T_{l}}}'(\hat{\bm{\Sigma}} - \bm{\Sigma}) \bm{\alpha}_{G_{T_{l'}}} | \notag \\
&= \sum_{l=0}^{L}\| \bm{\alpha}_{G_{T_{l}}} \|_{E} \sum_{l'=0}^{L}\| \bm{\alpha}_{G_{T_{l'}}} \|_{E}   \cdot | \bm{h}_{l}'(\hat{\bm{\Sigma}} - \bm{\Sigma}) \bm{h}_{l'}| \quad (\bm{h}_{l}:=\bm{\alpha}_{G_{T_{l}}}/\| \bm{\alpha}_{G_{T_{l}}} \|_{E}) \notag \\
&\leq \max_{0 \leq l,l' \leq L}  | \bm{h}_{l}'(\hat{\bm{\Sigma}} - \bm{\Sigma}) \bm{h}_{l'}| \cdot \left (\sum_{l=0}^{L}\| \bm{\alpha}_{G_{T_{l}}} \|_{E} \right )^{2} \notag \\
&\leq (2+c_{0})^{2} \| \bm{\alpha}_{G_{T_{0}}} \|_{E}^{2} \max_{0 \leq l,l' \leq L}  | \bm{h}_{l}'(\hat{\bm{\Sigma}} - \bm{\Sigma}) \bm{h}_{l'}|, \label{ineqb1}
\end{align}
where we have used (\ref{bound}).
Take and fix any $l,l'$ such that $0 \leq l, l' \leq L$, and let $T' = T_{l} \cup T_{l'}$. It is not hard to see that 
\begin{align*}
| \bm{h}_{l}'(\hat{\bm{\Sigma}} - \bm{\Sigma}) \bm{h}_{l'}| \leq \| \hat{\bm{\Sigma}}_{T'} - \bm{\Sigma}_{T'} \|. 
\end{align*}
Since $| T' | \leq 2s^{*}$, we see that 
\begin{equation}
\max_{0 \leq l,l' \leq L}  | \bm{h}_{l}'(\hat{\bm{\Sigma}} - \bm{\Sigma}) \bm{h}_{l'}| \leq \max_{| T | \leq 2s^{*}} \| \hat{\bm{\Sigma}}_{T} - \bm{\Sigma}_{T} \|. \label{ineqb2}
\end{equation}

Combining (\ref{ineqb1}) and (\ref{ineqb2}), we have
\begin{equation*}
| \hat{\kappa}^{2} - \kappa^{2} | \leq \max_{\bm{\alpha} \in \mathbb{S}^{dm-1} \cap \mathbb{C}} | \bm{\alpha}'(\hat{\bm{\Sigma}} - \bm{\Sigma}) \bm{\alpha} | \leq (2+c_{0})^{2} \max_{|T| \leq 2s^{*}} \| \hat{\bm{\Sigma}}_{T} - \bm{\Sigma}_{T} \|.
\end{equation*}
By the proof of Lemma \ref{lem3}, if (\ref{growth2}) and $\phi_{\max}(2s^{*}) \lesssim 1$, we have $\max_{|T| \leq 2s^{*}} \| \hat{\bm{\Sigma}}_{T} - \bm{\Sigma}_{T} \| = o_{p}(1)$. Therefore, we have $\hat{\kappa}^{2} \geq \kappa^{2} - o_{p}(1) \gtrsim_{p} 1$ provided that $\kappa \gtrsim 1$. 

\end{proof}

%\begin{remark}
%The proof of Lemma \ref{lem4} may be of independent interest. In the literature, it is often used to bound $\hat{\kappa}$ from below by the minimum sparse eigenvalue 
%by using Lemma 4.1 of \cite{BRT09}. On the other hand, the present proof directly compares the sample and population restricted eigenvalues, and  can remove an additional restriction on sparse minimal eigenvalues. 
%\end{remark}

\begin{lemma}
\label{lem6}
If $\| g^{*} \|_{2}^{2} \lesssim s^{*}$, then $\inf_{g \in \tilde{\mathcal{G}}_{m}^{T^{*}}} \| g^{*} - g \|_{2,n}^{2} \lesssim_{p} s^{*} \max \{  m^{-2\nu}, n^{-1} \}$.
\end{lemma}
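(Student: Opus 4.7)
The plan is to start from condition (C7)-(c), which supplies an approximator $g^{m} = \sum_{j \in T^{*}} g_{j}^{m} \in \mathcal{G}_{m}^{T^{*}}$ with $\| g^{*} - g^{m} \|_{2}^{2} \lesssim s^{*} m^{-2\nu}$, and then modify $g^{m}$ so that it lies in $\tilde{\mathcal{G}}_{m}^{T^{*}}$ (i.e.\ is empirically centered). Writing $g_{j}^{m}(z) = c_{j} + \sum_{k=1}^{m} \beta_{jk} \psi_{k}(z)$, the centered surrogate
\begin{equation*}
\tilde{g}^{m}(\bm{z}) := \sum_{j \in T^{*}} \sum_{k=1}^{m} \beta_{jk} (\psi_{k}(z_{j}) - \bar{\psi}_{jk})
\end{equation*}
differs from $g^{m}$ only by the constant $\bar{g}^{m} := n^{-1} \sum_{i=1}^{n} g^{m}(\bm{z}_{i})$, and belongs to $\tilde{\mathcal{G}}_{m}^{T^{*}}$. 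Since $\inf_{g \in \tilde{\mathcal{G}}_{m}^{T^{*}}} \| g^{*} - g \|_{2,n}^{2} \leq \| g^{*} - \tilde{g}^{m} \|_{2,n}^{2}$, it suffices to bound the latter.

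I would then use the elementary expansion $\tilde{g}^{m} = g^{m} - \bar{g}^{m}$ to write
\begin{equation*}
\| g^{*} - \tilde{g}^{m} \|_{2,n}^{2} = \| (g^{*} - g^{m}) + \bar{g}^{m} \|_{2,n}^{2} \leq 2 \| g^{*} - g^{m} \|_{2,n}^{2} + 2 (\bar{g}^{m})^{2}.
\end{equation*}
For the constant term, Jensen's inequality gives $(\bar{g}^{m} - \bar{g}^{*})^{2} \leq \| g^{*} - g^{m} \|_{2,n}^{2}$ where $\bar{g}^{*} := n^{-1} \sum_{i} g^{*}(\bm{z}_{i})$, whence $(\bar{g}^{m})^{2} \leq 2(\bar{g}^{*})^{2} + 2 \| g^{*} - g^{m} \|_{2,n}^{2}$, and we obtain
\begin{equation*}
\| g^{*} - \tilde{g}^{m} \|_{2,n}^{2} \leq 6 \| g^{*} - g^{m} \|_{2,n}^{2} + 4 (\bar{g}^{*})^{2}.
\end{equation*}

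To finish, both remaining terms are controlled by Markov's inequality. Since the $\bm{z}_{i}$'s are i.i.d., $\mathrm{E}[\| g^{*} - g^{m} \|_{2,n}^{2}] = \| g^{*} - g^{m} \|_{2}^{2} \lesssim s^{*} m^{-2\nu}$ by (C7)-(c), so $\| g^{*} - g^{m} \|_{2,n}^{2} \lesssim_{p} s^{*} m^{-2\nu}$. For the second term, since $\mathrm{E}[g^{*}(\bm{z}_{1})] = 0$, one has $\mathrm{E}[(\bar{g}^{*})^{2}] = \mathrm{Var}(g^{*}(\bm{z}_{1}))/n \leq \| g^{*} \|_{2}^{2}/n \lesssim s^{*}/n$ by the hypothesis $\| g^{*} \|_{2}^{2} \lesssim s^{*}$, giving $(\bar{g}^{*})^{2} \lesssim_{p} s^{*}/n$. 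Combining the two bounds yields $\| g^{*} - \tilde{g}^{m} \|_{2,n}^{2} \lesssim_{p} s^{*} m^{-2\nu} + s^{*}/n \lesssim s^{*} \max\{ m^{-2\nu}, n^{-1} \}$, as claimed. No step here is delicate; the only thing to verify carefully is that the centering operation $g^{m} \mapsto g^{m} - \bar{g}^{m}$ indeed lands in $\tilde{\mathcal{G}}_{m}^{T^{*}}$, which is immediate from the explicit basis expansion above.
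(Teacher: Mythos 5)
Your argument is correct and follows essentially the same route as the paper: start from the $L_{2}$-optimal approximator $g^{m}$ supplied by (C7)-(c), replace it by its empirically-centered version $\tilde{g}^{m} \in \tilde{\mathcal{G}}_{m}^{T^{*}}$, and bound the two resulting pieces (approximation error and centering constant) by Markov's inequality. The only cosmetic difference is in the treatment of the constant: the paper first re-chooses $g^{m}$ to have zero population mean (using that the exact $L_{2}$-minimizer over a class containing constants matches the mean of $g^{*}$) and then bounds $\bar g^{m}$ directly via $\mathrm{E}[(\bar g^{m})^{2}] \leq n^{-1}\|g^{m}\|_{2}^{2}$, whereas you keep $g^{m}$ as given and split $\bar g^{m}$ as $(\bar g^{m}-\bar g^{*})+\bar g^{*}$, controlling the first part deterministically by Jensen and the second by $\mathrm{Var}(g^{*}(\bm z_{1}))/n$ — a slightly more robust arrangement since it avoids the need for an exact minimizer, but otherwise the same idea.
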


\begin{proof}[Proof of Lemma \ref{lem6}] 
Let $g^{m}$ denote an element of $\mathcal{G}_{m}^{T^{*}}$ such that $\| g^{*} - g^{m}\|_{2} = \inf_{g \in \mathcal{G}_{m}^{T^{*}}} \| g^{*} - g \|_{2}$. 
$g^{m}$ can be written as $g^{m}(\bm{z}) = \sum_{j \in T^{*}} g_{j}^{m}(z_{j}) \ (\bm{z} = (z_{1},\dots,z_{d})')$ and $g_{j}^{m}(\cdot) = c_{j}^{*} + \sum_{k=1}^{m} \beta_{jk}^{*} \psi_{k}(\cdot)$ for some $c_{j}^{*} \in \mathbb{R}, \beta_{jk}^{*} \in \mathbb{R} \ (1 \leq k \leq m)$ for each $j \in T^{*}$.
Because $\mathrm{E} [g^{*}(\bm{z}_{1})] = 0$, we have $\mathrm{E}[ g^{m}(\bm{z}_{1}) ] = 0$, so that each $c_{j}^{*}$ may be taken such that
$g_{j}^{m}(\cdot) = \sum_{k=1}^{m} \beta_{jk}^{*} (\psi_{k}(\cdot) - \mathrm{E}[\psi_{k}(z_{1j})])$. Take $\tilde{g}^{m} (\bm{z}) := \sum_{j \in T^{*}} \sum_{k=1}^{m} \beta_{jk}^{*} (\psi_{k}(z_{j}) - \bar{\psi}_{jk})$. Invoke now that
\begin{align*}
\| g^{*} - \tilde{g}^{m} \|_{2,n}^{2} &\leq 2 \| g^{*} - g^{m} \|_{2,n}^{2} + 2 \| g^{m} - \tilde{g}^{m} \|_{2,n}^{2} \\
&=2 \| g^{*} - g^{m} \|_{2,n}^{2} + 2 \{ n^{-1} {\textstyle \sum}_{i=1}^{n} g^{m}(\bm{z}_{i}) \}^{2}.
\end{align*} 
By condition (C7)-(c) and Markov's inequality, we have 
\begin{equation*}
\| g^{*} - g^{m} \|_{2,n}^{2} \lesssim_{p} s^{*} m^{-2\nu},
\end{equation*}
while by the fact that $\mathrm{E}[ g^{m}(\bm{z}_{1}) ] = 0$, we have 
\begin{equation*}
 \{ n^{-1} {\textstyle \sum}_{i=1}^{n} g^{m}(\bm{z}_{i}) \}^{2} \lesssim_{p} n^{-1} \| g^{m} \|_{2}^{2} \lesssim n^{-1} (\| g^{*} \|_{2}^{2} + \| g^{*} - g^{m} \|_{2}^{2}) \lesssim s^{*} n^{-1}.
\end{equation*}
Therefore, we have 
\begin{equation*}
\inf_{g \in \tilde{\mathcal{G}}_{m}^{T^{*}}} \| g^{*} - g \|_{2,n}^{2} \leq \| g^{*} - g^{m} \|_{2,n}^{2} \lesssim_{p} s^{*} \max \{  m^{-2\nu}, n^{-1} \}.
\end{equation*}
\end{proof}

\subsection{Proof of Corollary 4.1}

Take $g^{m} = \sum_{j \in T^{*}} g_{j}^{m}$ as in condition (C7)-(c)'. Without loss of generality, we may assume that 
$\mathrm{E}[ g_{j}^{m}(z_{1j}) ] = 0$ for all $j \in T^{*}$, so that each $g_{j}^{m}$ is written as $g_{j}^{m}(\cdot) = \sum_{k=1}^{m} \beta_{jk}^{0} (\psi_{k}(\cdot) - \mathrm{E}[ \psi_{k}(z_{1j})])$ for some $\beta^{0}_{jk} \in \mathbb{R} \ (1 \leq k \leq m)$. Define $\bm{\beta}^{*} \in \mathbb{R}^{dm}$ by
$\beta_{jk}^{*} = \beta^{0}_{jk}$ for $j \in T^{*}$ and $1 \leq k \leq m$, and $\bm{\beta}^{*}_{G_{j}} = \bm{0}$ for $j \in (T^{*})^{c}$. 
Let $\tilde{g}^{m} := \sum_{j=1}^{d} \tilde{g}_{j}^{m}$, where $\tilde{g}_{j}^{m}(\cdot) := \sum_{k=1}^{m} \beta^{*}_{jk}(\psi_{k}(\cdot) - \bar{\psi}_{jk})$.
By the proof of Lemma \ref{lem6}, we have $\| g^{*} - \tilde{g}^{m} \|_{2,n}^{2} \lesssim_{p} s^{*} m^{-2\nu}$. 
Invoke that 
{\footnotesize
\begin{equation*}
\| \hat{\bm{\beta}} - \bm{\beta}^{*} \|_{E}^{2} \leq \hat{\phi}_{\min}(T^{*} \cup \hat{T})^{-2} \| \hat{g} - \tilde{g}^{m} \|_{2,n}^{2} \lesssim_{p} \| \hat{g} - g^{*} \|_{2,n}^{2} + \| g^{*} - \tilde{g}^{m} \|_{2,n}^{2} \lesssim_{p} \frac{s^{*} m \lambda_{1}^{2}}{n^{2}},
\end{equation*}
}
so that
\begin{equation*}
\| {\textstyle \sum}_{j \in T^{*} \backslash \hat{T}^{0}} g_{j}^{m} \|_{2}^{2} \leq \phi_{\max}(s^{*})^{2} \| \bm{\beta}_{G_{T^{*} \backslash \hat{T}^{0}}} \|_{E}^{2} \lesssim_{p} \frac{s^{*} m \lambda_{1}^{2}}{n^{2}}.
\end{equation*}
Therefore, we have
\begin{equation*}
\| {\textstyle \sum}_{j \in T^{*} \backslash \hat{T}}g_{j}^{*} \|_{2}^{2} \leq 2 \| {\textstyle \sum}_{j \in T^{*} \backslash \hat{T}^{0}} g_{j}^{m} \|_{2}^{2} + 2  \| {\textstyle \sum}_{j \in T^{*} \backslash \hat{T}^{0}} (g_{j}^{*} - g_{j}^{m}) \|_{2}^{2} \lesssim_{p} \frac{s^{*} m \lambda_{1}^{2}}{n^{2}}.
\end{equation*}
\qed

\section{On condition (C2)}

Suppose that $\bm{z}_{1},\dots,\bm{z}_{n}$ are given and fixed. 
As argued in Section 2.1, the key property of the error distribution to our rate analysis of Theorems 4.1 and 4.2 is the normal concentration property (around its mean) of a random variable of the form $\sup_{t \in \mathcal{T}} \sum_{i=1}^{n} u_{i} t_{i} $ where $\mathcal{T}$ is a bounded and countable subset of $\mathbb{R}^{n}$, which means that, letting $Z:=\sup_{t \in \mathcal{T}}  \sum_{i=1}^{n} u_{i} t_{i} $, 
\begin{equation}
\mathrm{P} (Z \geq \mathrm{E}[ Z ] +\sigma r) \leq C \exp (-c r^{2}), \ \forall r > 0,
\label{normal}
\end{equation}
where $\sigma^{2} := \sup_{t \in \mathcal{T}} \sum_{i=1}^{n} t_{i}^{2}$, and $c > 0$ and $C > 0$ are fixed constants (see the proofs of Lemmas 6.1 and \ref{lem3}). Condition (C2) gives a primitive sufficient condition for this normal concentration property. See \cite{L01} for an excellent exposition of the concentration of measure phenomenon. 
On the other hand, \cite{MVB09} assumed a uniform subgaussian condition 
\begin{equation}
\mathrm{E} [ \exp ( u^{2}_{1} /L) | \bm{z}_{1} ] \leq M, \ a.s.,
\label{subgauss}
\end{equation} 
for some fixed constants $L > 0$ and $M > 0$, which is weaker than our condition (C2), but their established rate $s^{*} (\log d/n)^{2\nu/(2\nu+1)}$ is suboptimal. \cite{STS11} later showed that the \cite{MVB09} estimator achieves the minimax rate $s^{*} \delta^{2}$ under a somewhat strong assumption that the error term is uniformly bounded.\footnote{\cite{KY10} and \cite{RWY10} dealt with a different estimator and established the rate $s^{*} \delta^{2}$ under different settings. \cite{KY10} assumed that the error term is uniformly bounded, and \cite{RWY10} assumed that the error term is normal independent of explanatory variables.} It is thus of some interest how our rate analysis changes if our condition (C2) is replaced by weaker (\ref{subgauss}). 

To the best of the author's knowledge, it is not known whether the uniform subgaussian condition alone ensures the normal concentration property (\ref{normal}). However, 
by Theorem \ref{thmC1} ahead, under the uniform subgaussian condition, a slightly weaker inequality 
\begin{equation}
\mathrm{P} (Z \geq \mathrm{E}[ Z ] +\sigma r) \leq C \exp (-c r^{2}/\log n), \ \forall r \geq 4, \label{dev-subgauss}
\end{equation}
holds. A careful inspection of the proofs leads to that if condition (C2) is replaced by (\ref{subgauss}), under some modifications to the conditions, the rate of convergence of the second step estimator will be
\begin{equation*}
\max \left \{ s^{*} n^{-2\nu/(2\nu+1)}, |\hat{T} \backslash T^{*}| \tilde{\delta}^{2}, \| {\textstyle \sum}_{j \in T^{*} \backslash \hat{T}} g_{j}^{*} \|_{2}^{2} \right \},
\end{equation*}
in the canonical case, where 
\begin{equation*}
\tilde{\delta} := \max \left \{ n^{-\nu/(2\nu+1)}, \sqrt{\frac{(\log n)(\log d)}{n}} \right \},
\end{equation*}
and rate of convergence of the group Lasso estimator will be $s^{*} \tilde{\delta}^{2}$. So the second step estimator at least  achieves the rate $s^{*} \tilde{\delta}^{2}$. The only difference is the appearance of the additional $\log n$ term, and as long as $\log d/(n \log n) \to 0$, $s^{*} \tilde{\delta}^{2}$ is faster than the rate $s^{*} (\log d/n)^{2\nu/(2\nu+1)}$. It is also expected that, under the uniform subgaussian condition (\ref{subgauss}), the \cite{MVB09} estimator  has the same rate of convergence as $s^{*} \tilde{\delta}^{2}$. 

\subsection{Proof of (\ref{dev-subgauss})}

Recall the $\psi_{\alpha}$-norm:
\begin{equation*}
\| X \|_{\psi_{\alpha}} = \inf \{ s > 0 : \mathrm{E}[ \exp (X^{\alpha}/s^{\alpha}) ] \leq 2 \}, \ \alpha > 0.
\end{equation*}

Let $\mathcal{T}$ be a bounded and countable subset of $\mathbb{R}^{n}$. 
\begin{theorem}
\label{thmC1}
Let $\epsilon_{1},\dots,\epsilon_{n}$ be independent random variables such that $\max_{1 \leq i \leq n} \| \epsilon_{i} \|_{\psi_{2}} \leq C_{\psi}$ for some constant $C_{\psi}$. Put $Z := \sup_{t \in \mathcal{T}} \sum_{i=1}^{n} \epsilon_{i} t_{i}$. Then, for all $r  \geq 4$, we have 
\begin{equation*}
\mathrm{P} \{ Z   \geq \mathrm{E}[Z] + r\sigma C_{\psi} \} \leq C \exp (- c r^{2}/ \log n ), 
\end{equation*}
where $\sigma := \sqrt{\sup_{t \in \mathcal{T}} \sum_{i=1}^{n} t_{i}^{2}}$, and $c > 0$ and $C>0$ are universal constants. 
\end{theorem}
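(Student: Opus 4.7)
The proof combines a truncation argument with Bousquet's version of Talagrand's concentration inequality for empirical processes with bounded summands. After rescaling I may normalize so that $C_\psi = 1$ and $\sigma = 1$; in particular $\sup_{t \in \mathcal{T}}\|t\|_\infty \leq \sup_{t \in \mathcal{T}}\|t\|_2 \leq 1$.

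First I would fix the truncation level $M = \sqrt{a \log n}$ with $a$ a large enough absolute constant, define $\tilde{\epsilon}_i := \epsilon_i \mathbf{1}\{|\epsilon_i| \leq M\}$ and the event $\Omega := \bigcap_{i=1}^n \{|\epsilon_i| \leq M\}$, and use the $\psi_2$-hypothesis together with the union bound to get $P(\Omega^c) \leq 2n\exp(-M^2) = 2 n^{1-a}$. On $\Omega$, $Z$ coincides with the truncated process $\tilde Z := \sup_t \sum_i \tilde{\epsilon}_i t_i$, and standard moment estimates for subgaussian variables show that the centering/truncation residuals $|E[Z] - E[\tilde Z]|$ and $\max_i |E[\tilde{\epsilon}_i]|$ are of negligible negative-polynomial order in $n$ provided $a$ is moderate.

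Next I would apply Bousquet's inequality to the centered truncated process $\bar Z := \sup_t \sum_i (\tilde{\epsilon}_i - E[\tilde{\epsilon}_i]) t_i$. The summands obey $|(\tilde{\epsilon}_i - E[\tilde{\epsilon}_i])t_i| \leq 2M$ and the weak variance is bounded by $\sigma_\ast^2 \leq \sup_t \sum_i t_i^2\, \mathrm{Var}(\tilde{\epsilon}_i) \leq 2$, so Bousquet's inequality gives
\[
P\bigl(\bar Z \geq E[\bar Z] + y\bigr) \leq \exp\!\Bigl(-\frac{y^2}{C(1 + M\,E[\bar Z] + M y)}\Bigr).
\]
Combined with $P(\Omega^c) \leq 2 n^{1-a}$ and the negligible residuals, this translates into a deviation bound for $Z - E[Z]$. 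Setting $y \asymp r$ and $M \asymp \sqrt{\log n}$, the Gaussian regime of Bousquet ($r \lesssim \sqrt{\log n}$) yields an exponent of order $-r^2/\log n$, which is exactly the target; for $r$ close to $4$ with $r^2 \lesssim \log n$, the claimed bound is essentially trivial since the right-hand side is of order one.

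The main technical obstacle is the Bernstein regime of Bousquet's bound: for $r \gg \sqrt{\log n}$ the exponent produced above reads $-r/\sqrt{\log n}$, which is weaker than the target $-r^2/\log n$. I expect to close this gap either by letting the truncation level $M$ depend on $r$ (enlarging $M$ so that Bousquet's Gaussian regime extends to cover the required $y$, while simultaneously enlarging $a$ so that the truncation error $n^{1-a}$ remains dominated by $C\exp(-cr^2/\log n)$), or equivalently by invoking a Pisier-type maximal inequality applied to an approximation of $\mathcal{T}$ at the appropriate scale to control the $\psi_2$-norm of $Z - E[Z]$ by $C\sigma C_\psi \sqrt{\log n}$. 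Either route delivers the stated Gaussian-type tail with variance inflated by the $\log n$ factor characteristic of suprema of subgaussian processes.
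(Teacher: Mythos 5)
Your general strategy (truncate, apply a concentration inequality for bounded summands, control the truncation error) is the same as the paper's, but there are two concrete gaps that you have partly noticed but not resolved, and the fixes you suggest do not close them.

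First, the all-or-nothing truncation event $\Omega = \bigcap_i\{|\epsilon_i|\le M\}$ with a fixed level $M \asymp \sqrt{\log n}$ cannot work: $\mathrm{P}(\Omega^c)\le 2n^{1-a}$ is a constant in $r$, while the target bound $C\exp(-cr^2/\log n)\to 0$ as $r\to\infty$. You need a truncation-error estimate that decays in $r$. The paper achieves this with an additive split rather than a conditioning: with $\epsilon_i^-:=\epsilon_iI(|\epsilon_i|\le L)$, $\epsilon_i^+:=\epsilon_iI(|\epsilon_i|>L)$, $Z^-:=\sup_t\sum_i\epsilon_i^-t_i$ and $\check Z^+:=\sup_{t\in\mathcal T\cup(-\mathcal T)}\sum_i\epsilon_i^+t_i$, one has $Z\le Z^-+\check Z^+$ and $\mathrm E[Z]\ge\mathrm E[Z^-]-\mathrm E[\check Z^+]$, reducing the problem to tails of $Z^-$ and of $\check Z^+$ separately. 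The remainder $\check Z^+$ is then bounded crudely by Cauchy--Schwarz, $\check Z^+\le\sigma\{\sum_i(\epsilon_i^+)^2\}^{1/2}$, and the $\psi_1$-norm of $\sum_i(\epsilon_i^+)^2$ is controlled by $C C_\psi^2\log n$ via the Ledoux--Talagrand and van der Vaart--Wellner maximal-inequality lemmas; this gives a subgaussian tail for $\check Z^+$ with scale $\sigma C_\psi\sqrt{\log n}$, which decays in $r$ exactly as required.

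Second, your identified obstacle with Bousquet's inequality is real and your proposed fixes do not overcome it. Bousquet's bound has a Bernstein term $\propto My$ in the denominator of the exponent, so its Gaussian regime is $y\lesssim(\sigma_*^2+M\mathrm E[\bar Z])/M$; \emph{enlarging} $M$ shrinks this regime rather than extending it, so tuning $M$ with $r$ does not resolve the tension (a larger $M$ worsens the Bernstein term, a smaller $M$ worsens the truncation tail and fails to cover the $\psi_2$ hypothesis). Your alternative ``control $\|Z-\mathrm E[Z]\|_{\psi_2}$ by $C\sigma C_\psi\sqrt{\log n}$'' is circular: that is exactly the conclusion of the theorem. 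The paper sidesteps Bousquet entirely. Since $\bm u\mapsto\sup_t\sum_iu_it_i$ is convex and $\sigma$-Lipschitz in the Euclidean norm, and $|\epsilon_i^-|\le 2L$, Ledoux's concentration for convex Lipschitz functions of bounded independent variables (Corollary 4.8 of Ledoux, 2001) gives the \emph{pure subgaussian} bound $\mathrm P(Z^-\ge\mathrm E[Z^-]+s)\le C\exp(-cs^2/(L\sigma)^2)$ with no Bernstein crossover. Taking $L\asymp C_\psi\sqrt{\log n}$ produces exactly the exponent $-cr^2/\log n$ for all $r$. The price for avoiding the Bernstein crossover is a variance parameter of $L^2\sigma^2$ rather than $\sigma^2$, but that inflation by $\log n$ is precisely what the theorem statement absorbs. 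Replacing Bousquet by this convex concentration inequality is the missing ingredient in your argument.
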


Theorem \ref{thmC1} is essentially proved in \cite{MT08} (but the exponential term is slightly worse in \cite{MT08}: $\exp (-cr^{2}/\log^{2}n)$). For the sake of completeness, we provide a proof of the theorem. 
The proof of this theorem uses some properties of the $\psi_{1}$-norm.

\begin{lemma}\citep[][Theorem 6.21]{LT91}
\label{lemC1}
Let $\xi_{1},\dots,\xi_{n}$ be independent centered random variables. Then,
\begin{equation*}
\| \sum_{i=1}^{n} \xi_{i} \|_{\psi_{1}} \leq C \left ( \mathrm{E}[ | \sum_{i=1}^{n} \xi_{i} |] + \| \max_{1 \leq i \leq n} | \xi_{i} | \|_{\psi_{1}} \right ),
\end{equation*}
where $C$ is a universal constant.
\end{lemma}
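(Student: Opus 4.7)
The plan is to follow the standard route via symmetrization and the Hoffmann--J\o rgensen inequality.

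First, I would reduce to symmetric summands. Let $\xi_i'$ be an independent copy of $\xi_i$ (with all variables mutually independent) and set $\xi_i^s := \xi_i - \xi_i'$, $S_n^s := \sum_i \xi_i^s$. Since $\xi_i$ is centered, for every $s>0$ Jensen's inequality applied to the convex function $x \mapsto e^{|x|/s}$, conditional on $(\xi_i)$, gives $\mathrm{E}\, e^{|S_n|/s} \leq \mathrm{E}\, e^{|S_n^s|/s}$, hence $\|S_n\|_{\psi_1} \leq \|S_n^s\|_{\psi_1}$. Together with the straightforward bounds $\mathrm{E}|S_n^s| \leq 2\mathrm{E}|S_n|$ and $\|\max_i|\xi_i^s|\|_{\psi_1} \leq 2\|\max_i|\xi_i|\|_{\psi_1}$, this reduces matters to the case of symmetric $\xi_i$, which I would assume from now on.

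For symmetric independent $\xi_i$, the Hoffmann--J\o rgensen inequality reads
\begin{equation*}
\mathrm{P}(|S_n| > 2t + u) \leq 4\, \mathrm{P}(|S_n|>t)^2 + \mathrm{P}(\max_i|\xi_i|>u), \qquad t,u>0.
\end{equation*}
Write $A := \mathrm{E}|S_n|$ and $M := \|\max_i|\xi_i|\|_{\psi_1}$. Markov's inequality yields $\mathrm{P}(|S_n|>8A) \leq 1/8$, so I would take $t_0 := 8A$ and iterate with $t_{k+1} := 2 t_k + u_k$ for a geometrically growing sequence $u_k \asymp 2^k M$. Since by the $\psi_1$-bound $\mathrm{P}(\max_i|\xi_i|>u_k) \leq 2 e^{-2^k}$ is doubly-exponentially small, an induction would show $p_k := \mathrm{P}(|S_n|>t_k) \leq C \cdot 2^{-2^k}$, while the explicit solution of the recursion gives $t_k \leq C \cdot 2^k (A+M)$.

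Translating this into a continuous tail bound --- given $v$ large, pick the unique $k$ with $t_{k-1} < v \leq t_k$ --- yields
\begin{equation*}
\mathrm{P}(|S_n|>v) \leq C \exp\bigl(-v/(C'(A+M))\bigr), \qquad v \geq C(A+M),
\end{equation*}
and integrating this tail against $e^{v/s}$ to verify $\mathrm{E}\, e^{|S_n|/s} \leq 2$ for $s$ of order $A+M$ delivers the desired inequality $\|S_n\|_{\psi_1} \leq C(A+M)$.

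The main technical obstacle is balancing the iteration: the $u_k$ must grow fast enough that $\mathrm{P}(\max_i|\xi_i|>u_k)$ is negligible compared with $4 p_{k-1}^2$ (ensuring the squaring term drives the decay), yet slowly enough that the accumulated additive contribution $\sum_{j<k} 2^{k-1-j} u_j$ in the explicit solution $t_k = 2^k t_0 + \sum_{j<k} 2^{k-1-j} u_j$ stays of order $2^k(A+M)$. A constant $u_k$ produces a nonvanishing fixed point for $p_k$ and so fails, while $u_k \asymp k M$ contaminates $t_k$ by a logarithmic factor and yields $\psi_1/\log$ rather than $\psi_1$ behavior. The geometric choice $u_k \asymp 2^k M$ is exactly the compromise that keeps both sides in check, and verifying carefully that the bookkeeping survives the induction is the heart of the argument.
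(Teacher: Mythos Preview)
The paper does not give its own proof of this lemma; it simply quotes Theorem~6.21 of Ledoux--Talagrand. So the comparison is with the standard reference proof, and your outline is indeed along the classical Hoffmann--J\o rgensen route. However, there is a concrete arithmetic slip that breaks the argument as written.

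With $u_j = c\,2^{j}M$ the explicit solution of the recursion $t_{k+1}=2t_k+u_k$ is
\[
t_k \;=\; 2^{k}t_0 \;+\; \sum_{j=0}^{k-1} 2^{\,k-1-j}\,u_j
\;=\; 2^{k}t_0 \;+\; cM\sum_{j=0}^{k-1} 2^{\,k-1}
\;=\; 2^{k}t_0 \;+\; c\,k\,2^{\,k-1}M,
\]
so $t_k\asymp k\,2^{k}(A+M)$, \emph{not} $2^{k}(A+M)$ as you claim. Carrying this extra factor of $k$ through, your tail bound becomes $\mathrm{P}(|S_n|>v)\lesssim \exp\bigl(-c\,v/\{(A+M)\log(v/(A+M))\}\bigr)$, which is strictly weaker than exponential and does \emph{not} integrate against $e^{v/s}$ for any finite $s$; hence it fails to deliver a $\psi_1$ bound. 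The tension is structural: for $\sum_{j<k}2^{k-1-j}u_j$ to stay $O(2^{k})$ you need $u_j=o(2^{j})$ in a summable sense, while for the additive term $\mathrm{P}(\max_i|\xi_i|>u_k)$ to keep pace with the doubly-exponential decay of $4p_k^{2}$ you need $u_k\gtrsim 2^{k}M$. No geometric choice threads this needle, so the naive probability-level iteration cannot reach $\psi_1$. (Your side remark that $u_k\asymp kM$ ``contaminates $t_k$ by a logarithmic factor'' is also off: there $t_k\asymp 2^k(A+M)$ is fine, but $p_k$ only decays like $e^{-ck}$, giving a merely polynomial tail in $v$.)

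Ledoux--Talagrand's actual proof of Theorem~6.21 does not proceed by this direct iteration; it relies on their sharper isoperimetric-type inequality for sums of independent random variables (their Theorem~6.17/6.19), which for integers $q$ yields a bound of the shape $(K/q)^{q}$ at thresholds of order $q(A+M)$ and hence genuine $\psi_1$ control. An alternative route that also works is to pass through the $L^p$ form of Hoffmann--J\o rgensen with careful control of the constants and use the characterization $\|X\|_{\psi_1}\asymp\sup_{p\ge 1}p^{-1}\|X\|_p$. Either way, something beyond the straight squaring iteration is needed.
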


For the evaluation of the term $ \| \max_{1 \leq i \leq n} | \xi_{i} | \|_{\psi_{1}}$, we use the next lemma.

\begin{lemma}\citep[][Lemma 2.2.2]{VW96}
\label{lemC2}
Let $\xi_{1},\dots,\xi_{n}$ be any random variables. Then, 
\begin{equation*}
\| \max_{1 \leq i \leq n} | \xi_{i} | \|_{\psi_{1}} \leq C (\log n) \max_{1 \leq i \leq n} \| \xi_{i} \|_{\psi_{1}},
\end{equation*}
where $C$ is a universal constant. 
\end{lemma}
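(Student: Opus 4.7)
The plan is a standard Orlicz-norm maximal-inequality argument. Set $M := \max_{1 \le i \le n} \| \xi_{i} \|_{\psi_{1}}$ and assume $M > 0$ (the degenerate case is trivial); we may also assume $n \ge 2$ since for $n=1$ the inequality is immediate. By the definition of the $\psi_{1}$-norm together with monotonicity of $s \mapsto \exp(|\xi_i|/s)$, we have $\mathrm{E}[\exp(|\xi_i|/M)] \le 2$ for every $i$.

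The first step is to control $\mathrm{E}[\exp(\max_i |\xi_i|/M)]$ by $n+1$. Since $\exp$ is monotone increasing, $\max_{i} \exp(|\xi_i|/M) = \exp(\max_i |\xi_i|/M)$, so subtracting $1$ and bounding the maximum by the sum yields
\begin{equation*}
\mathrm{E}[\exp(\max_i |\xi_i|/M)] - 1 \;=\; \mathrm{E}\bigl[\max_i(\exp(|\xi_i|/M)-1)\bigr] \;\le\; \sum_{i=1}^{n} \mathrm{E}[\exp(|\xi_i|/M)-1] \;\le\; n,
\end{equation*}
so $\mathrm{E}[\exp(\max_i |\xi_i|/M)] \le n+1$.

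The second step is a rescaling by a factor $K \ge 1$. For such $K$, Jensen's inequality applied to the concave map $x \mapsto x^{1/K}$ gives
\begin{equation*}
\mathrm{E}[\exp(\max_i |\xi_i|/(KM))] \;=\; \mathrm{E}\bigl[(\exp(\max_i |\xi_i|/M))^{1/K}\bigr] \;\le\; (n+1)^{1/K}.
\end{equation*}
Choosing $K = \log_{2}(n+1)$ makes the right-hand side equal to $2$, and by definition of the $\psi_{1}$-norm this forces $\| \max_i |\xi_i| \|_{\psi_1} \le KM$. Since $\log_{2}(n+1) \le C \log n$ for $n \ge 2$ with a universal constant $C$, the claim follows.

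There is no real obstacle here beyond bookkeeping: the only subtlety is the rescaling step, where one must separate the multiplicative constant $n+1$ inside the exponential's expectation from the target level $2$ via Jensen's inequality, which is precisely where the $\log n$ factor arises.
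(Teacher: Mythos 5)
Your proof is correct. The paper supplies no argument of its own for this lemma---it is quoted directly from van der Vaart and Wellner (1996, Lemma~2.2.2)---and your derivation is essentially the standard one specialized to the $\psi_{1}$ case: bound $\mathrm{E}[\exp(\max_{i}|\xi_{i}|/M)]$ by $n+1$ via the max-by-sum step, then trade the factor $n+1$ for a rescaling of order $\log n$; your use of Jensen with $x\mapsto x^{1/K}$ plays exactly the role that the submultiplicativity inequality $\psi(x)\psi(y)\leq\psi(xy)$ plays in the general Orlicz argument of the cited reference. The only quibble is cosmetic: as literally stated with the factor $\log n$ the inequality is vacuous at $n=1$ (a defect shared by the statement in the paper, and harmless since the bound is only used for $n\to\infty$), and the passage from the definition of $\|\xi_{i}\|_{\psi_{1}}$ as an infimum to $\mathrm{E}[\exp(|\xi_{i}|/M)]\leq 2$ implicitly uses monotone convergence, which is standard.
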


\begin{proof}[Proof of Theorem \ref{thmC1}]

In this proof, $c$ and $C$ denote some universal constants. Their values may change from line to line. 
Let $\epsilon_{i}^{-} := \epsilon_{i} I( | \epsilon_{i} | \leq L) $ and $\epsilon_{i}^{+} := \epsilon_{i} I ( | \epsilon_{i} | > L)$. The constant $L > 0$ is defined later. Define $\check{\mathcal{T}} := \mathcal{T} \cup \{ -t : t \in \mathcal{T} \}$. Let $Z^{-} := \sup_{t \in \mathcal{T}} \sum_{i=1}^{n} \epsilon_{i}^{-} t_{i}$ and $\check{Z}^{+} := \sup_{t \in \tilde{\mathcal{T}}} \sum_{i=1}^{n} \epsilon_{i}^{+} t_{i}$. Clearly, $Z \leq Z^{-} + \check{Z}^{+}$, and by using that $\sum_{i=1}^{n} \epsilon_{i}^{-} t_{i} = \sum_{i=1}^{n} (\epsilon_{i}- \epsilon^{+}) t_{i}  = \sum_{i=1}^{n} \epsilon_{i} t_{i} + \sum_{i=1}^{n} \epsilon^{+}_{i} (-t_{i})$, we have $\mathrm{E}[ Z^{-} ] \leq \mathrm{E}[ Z ] + \mathrm{E}[ \check{Z}^{+} ]$, so that $\mathrm{E}[ Z ] \geq \mathrm{E}[ Z^{-} ] - \mathrm{E}[ \check{Z}^{+} ]$. Observe that 
\begin{align*}
&\mathrm{P} \{ Z   \geq \mathrm{E}[Z] + r\sigma C_{\psi} \}  \leq \mathrm{P} \{ Z^{-} + \check{Z}^{+} \geq \mathrm{E}[Z^{-}] - \mathrm{E}[\check{Z}^{+}] + r\sigma C_{\psi} \} \\
&\qquad \leq \mathrm{P} \{ Z^{-} \geq  \mathrm{E}[Z^{-}] + r\sigma C_{\psi}/2 \} + \mathrm{P} \{ \check{Z}^{+} + \mathrm{E}[\check{Z}^{+}] \geq r\sigma C_{\psi}/2 \},
\end{align*}
Because $| \epsilon^{-}_{i} | \leq 2 L$, by Corollary 4.8 of \cite{L01}, we have 
\begin{equation*}
\mathrm{P}\{ Z^{-} \geq \mathrm{E}[Z^{-}] + r\sigma C_{\psi}/2 \} \leq C \exp ( -c r^{2}C_{\psi}^{2} /L^{2} ), \ \forall r > 0.
\end{equation*}

On the other hand, it is standard to see that $\| (\epsilon_{i}^{+})^{2} \|_{\psi_{1}} = \| \epsilon_{i}^{+} \|_{\psi_{2}}^{2} \leq  C^{2}_{\psi}$ and 
\begin{align*}
\mathrm{E}[\sum_{i=1}^{n} (\epsilon_{i}^{+})^{2}] &\leq n \max_{1 \leq i \leq n} \mathrm{E}[ \epsilon_{i}^{2} I(| \epsilon_{i} | > L) ]] \\
&\leq n \max_{1 \leq i \leq n} \mathrm{E}[\epsilon_{i}^{4}]^{1/2} \mathrm{P}(| \epsilon_{i}| > L)^{1/2} \\
&\leq n C C^{2}_{\psi} \exp ( - c L^{2}/C_{\psi}^{2}).
\end{align*}
Thus, by Lemmas \ref{lemC1} and \ref{lemC2}, 
\begin{align*}
\| \sum_{i=1}^{n} (\epsilon_{i}^{+})^{2} \|_{\psi_{1}} &\leq \| \sum_{i=1}^{n} \{ (\epsilon_{i}^{+})^{2}-\mathrm{E}[(\epsilon_{i}^{+})^{2}] \} \|_{\psi_{1}} + \mathrm{E}[\sum_{i=1}^{n} (\epsilon_{i}^{+})^{2}]  \\
&\leq CC_{\psi}^{2} \{ n \exp ( - c L^{2}/C_{\psi}^{2}) +  \log n \}.
\end{align*}
Take $L=C C_{\psi} \sqrt{\log n}$ such that $\mathrm{E}[\sum_{i=1}^{n} (\epsilon_{i}^{+})^{2}]  \leq C_{\psi}^{2}$ and $\| \sum_{i=1}^{n} (\epsilon_{i}^{+})^{2} \|_{\psi_{1}} \leq C C_{\psi}^{2} \log n$. Because 
$\check{Z}^{+} \leq \sigma \{ \sum_{i=1}^{n} (\epsilon_{i}^{+})^{2} \}^{1/2}$, we have 
\begin{equation*}
\| \check{Z}^{+} \|_{\psi_{2}} \leq C \sigma C_{\psi} \sqrt{\log n}, \ \mathrm{E}[ \check{Z}^{+} ] \leq \sigma C_{\psi}.
\end{equation*}
Therefore, for all $r \geq 4$, we have 
\begin{align*}
\mathrm{P} \{ \check{Z}^{+} + \mathrm{E}[\check{Z}^{+}] \geq r\sigma C_{\psi}/2 \} &\leq \mathrm{P} \{ \check{Z}^{+} \geq r\sigma C_{\psi}/4 \} \\
&\leq C \exp (- c r^{2}/\log n). 
\end{align*}
This completes the proof. 
\end{proof}

\end{document}